\documentclass[11pt]{amsart}
\usepackage{hyperref}

\voffset0cm \hoffset-1.9cm \topskip=0pt \textwidth=1.25 \textwidth

\newcommand{\cX}{\mathcal{X}}

\renewcommand{\P}{\mathbb{P}}

\newcommand{\X}{\mathcal{X}}
\newcommand{\cI}{\mathcal{I}}

\newcommand{\cY}{\mathcal{Y}}

\newcommand{\T}{\mathbb{T}}

\newcommand{\R}{\mathbb{R}}
\newcommand{\E}{\mathbb{E}}
\newcommand{\N}{\mathbb{N}}

\newcommand{\Z}{\mathbb{Z}}

\newcommand{\norm}[1]{\left\Vert #1\right\Vert}

\theoremstyle{plain}
\newtheorem{theorem}{Theorem}[section]
\newtheorem{lemma}[theorem]{Lemma}
\newtheorem{proposition}[theorem]{Proposition}

\newtheorem*{theoremA'}{Theorem A'}
\newtheorem*{theoremB'}{Theorem B'}
\newtheorem*{theoremC'}{Theorem C'}

\newtheorem*{theorem*}{Theorem}

\newtheorem{corollary}[theorem]{Corollary}

\theoremstyle{definition}

\theoremstyle{remark}

\newtheorem*{remark}{Remark}

\begin{document}

\title[\tiny{Random sequences and pointwise convergence of multiple ergodic averages}]{Random sequences and pointwise convergence of multiple ergodic averages}
%%\title{Random sequences and pointwise convergence of multiple ergodic averages. }

\author{N. Frantzikinakis}
\address[Nikos Frantzikinakis]{University of Crete, Department of mathematics, Knossos Avenue, Heraklion 71409, Greece} \email{frantzikinakis@gmail.com}

\author{E. Lesigne}
\address[Emmanuel Lesigne]{Laboratoire de Math\'ematiques et Physique Th\'eorique (UMR CNRS 6083)\\
Universit\'e Fran\c{c}ois Rabelais Tours \\
F\'ed\'eration de Recherche Denis Poisson\\
Parc de Grandmont \\
37200 Tours\\ France} \email{emmanuel.lesigne@lmpt.univ-tours.fr}

\author{M. Wierdl}
\address[M\'at\'e Wierdl]{Department of Mathematics\\
  University of Memphis\\
  Memphis, TN \\ 38152 \\ USA } \email{wierdlmate@gmail.com}

  \thanks{The  first author was partially supported by  Marie Curie IRG  248008 and the third author by NSF grant DMS-0801316.}

\subjclass[2000]{Primary: 37A30;  Secondary:  28D05, 05D10, 11B25}

\keywords{Ergodic averages, mean convergence, pointwise convergence, multiple recurrence, random sequences, commuting transformations.}

 \maketitle

\maketitle

\begin{abstract}
We prove  pointwise convergence, as $N\to \infty$, for the multiple
ergodic   averages $\frac{1}{N}\sum_{n=1}^N f(T^nx)\cdot
g(S^{a_n}x)$, where $T$ and  $S$ are commuting measure preserving
transformations, and $a_n$ is a random version of the sequence
$[n^c]$ for some appropriate $c>1$. We also prove similar mean
convergence results for  averages of the form
$\frac{1}{N}\sum_{n=1}^N f(T^{a_n}x)\cdot g(S^{a_n}x)$, as well as
pointwise results when $T$ and $S$ are powers of the same
transformation. The deterministic versions of these results, where
one replaces $a_n$ with $[n^c]$, remain open,
  and we hope that our method will indicate a fruitful way to approach these problems as well.
\end{abstract}

\setcounter{tocdepth}{1}%%Only sections appear in Contents
%%\tableofcontents

\section{Introduction}

\subsection{Background and new results} Recent advances in ergodic theory have sparked an outburst of activity in the study
of the limiting behavior of multiple ergodic averages.
%%, using polynomial and more complicated iterates.
 Despite the various successes in proving mean convergence results,
%%but despite these developments
progress towards the corresponding pointwise convergence problems
has been very scarce.
 For instance, we still do not know   whether the averages
\begin{equation}\label{E:Av1}
\frac1N \sum_{n=1}^N f(T^nx)\cdot  g(S^nx)
\end{equation}
converge pointwise  when $T$ and $S$ are two commuting measure preserving transformations acting on the same probability space and $f$ and $g$ are bounded measurable functions. Mean convergence for such averages was shown in \cite{CL84} and  was recently generalized to an arbitrary number of   commuting transformations in \cite{Ta08}. On the other hand,  the situation with pointwise  convergence is much less satisfactory.
   Partial results that deal with special classes of transformations   can be found in
\cite{Bere85,Bere88,Les93a, LRR03, As05}. Without imposing any strictures
on the possible classes of the measure preserving transformations considered, pointwise
convergence is only known
  when $T$ and $S$ are powers of the same transformation \cite{Bou90} (see also \cite{D07} for an alternate proof), a result that has not been improved for
  twenty years.\\

  %%Let us also mention that the averages \eqref{E:av1} are known to converge pointwise
 %% for special classes of (A) structured systems:
 %%  nilsystems (using \cite{Lei05a} and maximal
 %% inequalities), certain skew products  (A) random systems and $K$-automorphisms (see \cite{As05} or use %%the argument in  \cite{Les84})
  %%  $K$-autowhen the transformations $T$ and $S$ are nilsystems (using \cite{Lei05a} and maximal
  %%inequalities),

More generally, for fixed $\alpha, \beta \in [1,+\infty)$, one would like to know whether the averages
 \begin{equation}\label{E:Av2}
\frac1N \sum_{n=1}^N f(T^{[n^\alpha]}x)\cdot  g(S^{[n^\beta]}x)
\end{equation}
converge pointwise. Mean convergence for
these and related averages has been
 extensively studied,
partly because of various links to questions in combinatorics.
In
particular, mean convergence is known when  $T=S$ and $\alpha$,
$\beta$ are positive integers \cite{HK05b,Lei05c}, or
positive  non-integers \cite{Fr09b}. Furthermore, for general
commuting transformations $T$ and $S$, mean convergence is known
when $\alpha$, $\beta$ are different positive integers \cite{CFH09}.
Regarding  pointwise convergence, again,
the situation is  much less satisfactory. When $\alpha,\beta$ are integers, some
 partial results for special classes of transformations
can be found in \cite{DL96}  and \cite{Lei05a}.
Furthermore, pointwise
convergence is known  for averages of the form
$\frac1N \sum_{n=1}^N f(T^{[n^\alpha]}x)$ with no restrictions on the transformation $T$
 (\cite{Bou88} for integers $\alpha$, and \cite{Wi89} or
\cite{BKQW05} for non-integers $\alpha$). But  for general commuting transformations $T$ and $S$, no  pointwise
convergence result is known, %% for the averages \eqref{E:Av2},
not even when $T=S$ and $\alpha\neq \beta$.

The main goal  of this article is to make some
progress related to the problem of pointwise convergence of the averages \eqref{E:Av2} %%these questions
by considering randomized versions of fractional powers of $n$, in
place of the deterministic ones,  for various suitably chosen exponents
$\alpha$ and $\beta$. In our first result, we study a variation of the
averages \eqref{E:Av2} where the iterates of $T$ are deterministic
and the iterates of $S$ are random.
 More precisely, we let  $a_n$ be a random version of the sequence $ [n^\beta]$ where $\beta\in (1, 14/13)$ is arbitrary.
We prove that almost surely  (the set of probability $1$ is universal)
the averages
\begin{equation}\label{E:Av3}
\frac1N \sum_{n=1}^N f(T^nx) \cdot g(S^{a_n}x)
\end{equation}
converge pointwise, and we determine the limit explicitly.
This is the first pointwise convergence result for
 multiple ergodic averages of the form $\frac1N \sum_{n=1}^N f(T^{a_n}x) \cdot g(S^{b_n}x)$,
 where $a_n,b_n$ are strictly increasing sequences and $T,S$ are general
  commuting measure preserving transformations. In fact,
even for mean convergence the result is new,
and  this is an instance where convergence of multiple ergodic averages
 involving  sparse iterates
 is obtained without the use of  rather deep
ergodic structure theorems and equidistribution results on nilmanifolds.

In our second result, we study a randomized version of the averages \eqref{E:Av2}
when $\alpha=\beta$.
In this case, we let   $ a_n $ be a random version of the sequence $[n^\alpha] $ where $\alpha\in (1, 2)$ is arbitrary,
 and prove that almost surely (the set of probability $1$ is universal) the averages
\begin{equation}\label{E:Av4}
\frac1N \sum_{n=1}^N f(T^{a_n}x) \cdot g(S^{a_n}x)
\end{equation}
 converge in the mean, and    conditionally to the pointwise convergence of the averages
\eqref{E:Av1}, they also converge pointwise.  Even for mean convergence, this gives  the first examples of sparse sequences of integers $a_n$ for which the averages
\eqref{E:Av4} converge for general commuting measure preserving transformations $T$ and $S$.

Because our convergence results come with explicit limit formulas, we can easily deduce  some related multiple recurrence
results. Using the correspondence principle of Furstenberg, these results  translate to statements in combinatorics about configurations that can be found in every  subset of  the integers, or  the integer lattice,
with positive upper density.

Let us also remark  that convergence of the averages \eqref{E:Av1} for not necessarily commuting transformations is known to fail in general\footnote{See example 7.1 in  \cite{Bere85},  or let
$T,S\colon \T\to\T$, given by $Tx=2x$, $Sx=2x+\alpha$, and $f(x)=e^{-2\pi i x}, g(x)=e^{2\pi i x}$,
 where $\alpha\in [0,1]$ is chosen so that the averages $\frac{1}{N}\sum_{n=1}^Ne^{2\pi i\cdot  2^n\alpha}$ diverge.}. We prove that this is also the  case for the averages \eqref{E:Av2}, \eqref{E:Av3}, and  \eqref{E:Av4}.

We state the exact results   in the next section,
where we also give precise definitions of the concepts used throughout
the paper.

\subsection{Precise statements of new results}
\subsubsection{Our setup} \label{SS:setup} We  work with  random sequences of integers that are
 constructed by selecting a positive  integer $n$ to be a member of our sequence
with probability $\sigma_n\in [0,1]$.
More precisely, let $(\Omega,\mathcal{F}, \mathbb{P})$ be a probability space, and  let $(X_n)_{n\in\N}$ be a sequence of independent random variables with
$$
\P(X_n=1)=\sigma_n  \ \text{ and } \ \P(X_n=0)=1-\sigma_n.
 $$ In the present article we always  assume that $\sigma_n= n^{-a}$ for some $a\in (0,1)$.
The random sequence $(a_n(\omega))_{n\in\N}$ is constructed by
taking the positive integers $n$ for which $X_n(\omega)=1$ in
increasing order. Equivalently, $a_n(\omega)$ is the smallest $k\in \N$ such that $X_1(\omega)+\cdots+X_k(\omega)=n$. We record the identity
 \begin{equation}\label{E:=n}
X_1(\omega)+\cdots+X_{a_n(\omega)}(\omega)=n
\end{equation}
for future use.

The  sequence $(a_n(\omega))_{n\in\N}$ is what we called random version of
the sequence $n^{1/(1-a)}$ in the previous subsection.
 %%(because $\sum_{n=1}^\infty \sigma_n=+\infty$, almost surely our sequences are indeed infinite).
 Indeed, using a variation of the strong law of large numbers (see Lemma~\ref{L:estimate1} below), we have that almost surely
$\frac{1}{\sum_{k=1}^N\sigma_k}\sum_{k=1}^NX_k(\omega)$ converges to $1$.  Using the implied estimate   for $a_n(\omega)$ in place of $N$,  where  $n$ is suitably large, and \eqref{E:=n},  we deduce that almost surely
  $a_n(\omega)/n^{1/(1-a)}$  converges to a non-zero constant.

%%We say that a certain property holds almost surely for the sequences $(a_n(\omega))_{n\in\N}$, if there %%exists a universal set $\Omega_0\in \mathcal{F}$, such that  $\P(\Omega_0)=1$, and for every $\omega\in %%\Omega_0$ the sequence $(a_n(\omega))_{n\in\N}$ satisfies the given property.

\subsubsection{Different iterates}
In our first result we study a randomized version of the averages \eqref{E:Av2} when
$\alpha=1$.

\begin{theorem}\label{T:main}
 With the notation of Section~\ref{SS:setup},  let   $\sigma_n= n^{-a}$ for some $a\in (0,1/14)$.
 %%\footnote{In fact we get the same conclusion under the assumption that $\sigma_n$ is %%decreasing and  $n^{-1/14}\ll \sigma_n\ll (\log n)^{-1-\varepsilon}$  for some %%$\varepsilon>0$.}
 %%But it is unclear what happens when $\sigma_n \gg (\log n)^{-1}$, and in particular %%when    $\sigma_n=1/2$.
Then almost surely  the following holds: For every probability space $(X,\X,\mu)$,
commuting measure preserving transformations $T,S\colon X\to X$, and functions $f,g\in L^\infty(\mu)$,
for almost every $x\in X$ we have
  \begin{equation}\label{E:formula}
 \lim_{N\to\infty} \frac{1}{N}\sum_{n=1}^N f(T^nx)\cdot g(S^{a_n(\omega)}x)=\tilde{f}(x)\cdot \tilde{g}(x)
 \end{equation}
where
$\tilde{f}\mathrel{\mathop:}=\lim_{N\to\infty}\frac{1}{N}\sum_{n=1}^NT^nf= \E(f|\mathcal{I}(T))$,
$\tilde{g}\mathrel{\mathop:}= \lim_{N\to\infty}\frac{1}{N}\sum_{n=1}^NS^ng=\E(g|\mathcal{I}(S))$.\footnote{If $(X,\X,\mu)$ is a probability space, $f\in L^\infty(\mu)$,  and $\cY$ is a sub-$\sigma$-algebra of $\X$, we denote by $\E(f|\cY)$  the conditional expectation of $f$ given $\cY$. If $T\colon X\to X$ is a measure preserving transformation, by $\mathcal{I}(T)$ we denote the sub-$\sigma$-algebra of sets that are left invariant by $T$.}
\end{theorem}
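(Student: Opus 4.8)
The plan is to peel off a main term using the ergodic decomposition, reduce the theorem to a single ``both factors average to zero'' statement, and prove that statement by exploiting the independence of the $X_n$ through a second--moment estimate, upgrading from mean--square to pointwise convergence with a subsequence and maximal inequality argument. The whole point of the randomization is that it replaces the intractable arithmetic of the deterministic iterates $[n^{1/(1-a)}]$ by genuine independence.

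\emph{Reduction.} Write $f=\tilde f+f_0$ and $g=\tilde g+g_0$, where $f_0:=f-\E(f|\mathcal{I}(T))$ and $g_0:=g-\E(g|\mathcal{I}(S))$, so that $\E(f_0|\mathcal{I}(T))=0=\E(g_0|\mathcal{I}(S))$. Expanding the product gives four averages. Since $\tilde f$ is $T$--invariant and $\tilde g$ is $S$--invariant we have $\tilde f(T^nx)=\tilde f(x)$ and $\tilde g(S^{a_n(\omega)}x)=\tilde g(x)$ for a.e.\ $x$, so the term carrying both ``tilde'' factors equals $\tilde f\cdot\tilde g$ for every $N$. In the term $\tilde g(x)\cdot\frac1N\sum_{n=1}^N f_0(T^nx)$, Birkhoff's theorem gives the limit $\tilde g(x)\,\E(f_0|\mathcal{I}(T))(x)=0$, and in the term $\tilde f(x)\cdot\frac1N\sum_{n=1}^N g_0(S^{a_n(\omega)}x)$ the \emph{single--transformation} random pointwise ergodic theorem for the sequence $(a_n(\omega))$ — classical for $\sigma_n=n^{-a}$ with $a\in(0,1)$, and in any case far easier than what follows — gives $\tilde f(x)\,\E(g_0|\mathcal{I}(S))(x)=0$. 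Hence it suffices to prove that almost surely, for every system and all $f_0,g_0\in L^\infty$ with vanishing conditional means,
\begin{equation}\label{E:crux}
\lim_{N\to\infty}\frac1N\sum_{n=1}^N f_0(T^nx)\,g_0(S^{a_n(\omega)}x)=0\qquad\text{for a.e. }x.
\end{equation}

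\emph{Comparison with the $\omega$--average, and passage to pointwise convergence.} Fix $x$. By independence of the $X_n$, the quantity $\E_\omega\big[g_0(S^{a_n(\omega)}x)\big]=\sum_m\P(a_n(\omega)=m)\,g_0(S^mx)$ is a smoothing of the sequence $(g_0(S^mx))_m$ over a window centred at $m\asymp n^{1/(1-a)}$ whose length — the fluctuation scale of $a_n(\omega)$, controlled by Lemma~\ref{L:estimate1} together with a local limit estimate — is $\asymp n^{\frac1{1-a}-\frac12}\to\infty$; such windowed averages of $g_0$ along the $S$--orbit tend to $\E(g_0|\mathcal{I}(S))(x)=0$ for a.e.\ $x$, so, averaging against the bounded weights $f_0(T^nx)$, $\E_\omega$ of the average in \eqref{E:crux} tends to $0$. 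It remains to bound the fluctuation $A_N(\omega,x)-\E_\omega A_N(\cdot,x)=\frac1N\sum_{n=1}^N f_0(T^nx)\big(g_0(S^{a_n(\omega)}x)-\E_\omega g_0(S^{a_n(\omega)}x)\big)$. Squaring and taking $\E_\omega$, the contribution of the pair $(n,n')$ is $f_0(T^nx)\overline{f_0(T^{n'}x)}\,\mathrm{Cov}_\omega\big(g_0(S^{a_n}x),g_0(S^{a_{n'}}x)\big)$, and since by independence of the $X_n$ the random times $a_n(\omega),a_{n'}(\omega)$ decorrelate as $|n-n'|$ grows, these covariances are $O(1)$ in a short range and decay beyond it. Integrating also in $x$ — transferring, if convenient, the whole question to the two commuting shifts on $\Z^2$, so that no maximal inequality in $\mu$ is spent here — one is reduced to a purely combinatorial counting estimate, which I expect to yield $\big\Vert A_N-\E_\omega A_N\big\Vert_{L^2(\P\times\mu)}^2=O(N^{-\delta})$ for some $\delta=\delta(a)>0$. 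A power--saving rate then lets one fix a subsequence $N_j$ (say $N_j=\lfloor j^{2/\delta}\rfloor$) along which the squared norms are summable, so by Borel--Cantelli and Fubini \eqref{E:crux} holds along $N_j$ for a.e.\ $x$ and a.s.\ $\omega$; a system--uniform maximal inequality $\big\Vert\sup_{N_j\le N<N_{j+1}}|A_N-A_{N_j}|\big\Vert$ bridges the gaps and gives \eqref{E:crux} in full. Because after the transference all the estimates involve only countably many $\omega$--measurable events, the exceptional null set can be chosen once and for all, independently of the system, which yields the universal probability--one statement; the same argument of course also produces mean convergence.

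The main obstacle is the covariance/counting estimate feeding into the last step: obtaining a genuine power gain in $N$, together with the accompanying maximal inequality, uniformly over all measure preserving systems. Balancing the relevant scales here — the near--diagonal range, the gap $a_{n+1}(\omega)-a_n(\omega)\asymp n^{\frac1{1-a}-1}$, the fluctuation scale $n^{\frac1{1-a}-\frac12}$, and the averaging speed $1/N$ — is exactly what forces the numerical restriction $a<1/14$, equivalently $\beta=\tfrac1{1-a}<\tfrac{14}{13}$; this is where the randomness earns its keep, and any widening of the admissible range of $a$ (hence of $\beta$) would have to come from sharpening precisely this estimate.
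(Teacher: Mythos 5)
Your reduction to the ``crux'' average $\frac1N\sum_{n=1}^N f_0(T^nx)\,g_0(S^{a_n(\omega)}x)\to0$ is fine, and the rough plan (compare with the $\omega$-average, then control the $\omega$-fluctuation by a second-moment bound and upgrade to pointwise via Borel--Cantelli on a subsequence) is a sensible first reflex. But the heart of the argument — the claim that $\|A_N-\E_\omega A_N\|_{L^2(\P\times\mu)}^2=O(N^{-\delta})$ via decay of $\mathrm{Cov}_\omega\bigl(g_0(S^{a_n(\omega)}x),\,g_0(S^{a_{n'}(\omega)}x)\bigr)$ — rests on a decorrelation that simply is not there. The random times $a_1(\omega)<a_2(\omega)<\cdots$ are increasing partial minima of a single i.i.d.\ source, hence \emph{strongly positively correlated}: if $a_n$ is above its mean, so is $a_{n'}$ for $n'>n$ with high probability, and both fluctuate jointly on the same scale $n^{1/(1-a)-1/2}$. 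Concretely, if $g_0$ is an $S$-eigenfunction $g_0(S^kx)=e(k\theta)g_0(x)$, the covariance becomes $|g_0(x)|^2\bigl[\E_\omega e\bigl(\theta(a_n-a_{n'})\bigr)-\E_\omega e(\theta a_n)\,\overline{\E_\omega e(\theta a_{n'})}\bigr]$, and the common drift of $a_n,a_{n'}$ makes the second product small while the first factor need not be — this is the opposite of decorrelation. You cannot treat $a_n,a_{n'}$ as if they become independent when $|n-n'|$ grows; they never do.

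The missing idea — and it is the key idea of the whole paper — is the change of variable encoded by the identity $X_1(\omega)+\cdots+X_{a_n(\omega)}(\omega)=n$ (equation \eqref{E:=n}). Substituting, one rewrites the average as
$$\frac1N\sum_{n=1}^N f\bigl(T^{X_1(\omega)+\cdots+X_{a_n(\omega)}(\omega)}x\bigr)g\bigl(S^{a_n(\omega)}x\bigr),$$
and then replaces the sum over the sparse random set $\{a_1(\omega),a_2(\omega),\dots\}$ by a sum over \emph{all} $n$ weighted by $X_n(\omega)$, obtaining (up to normalization $W_N$) $\frac1{W_N}\sum_{n=1}^N X_n(\omega)\,f(T^{X_1+\cdots+X_n}x)\,g(S^nx)$. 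After this substitution, the randomness enters through the i.i.d.\ weights $X_n$ and through a random walk in the $T$-exponent, and the crucial variance estimate (Proposition~\ref{P:main'}) becomes tractable because $Y_n=X_n-\sigma_n$ is centered and independent of everything of index $<n$. The paper then eliminates the dependence on $(T,S,f,g)$ by two applications of van der Corput's lemma before taking $\E_\omega$ — a step that your sketch replaces with a hopeful ``counting estimate.'' Separately, your input that $\E_\omega g_0(S^{a_n(\omega)}x)\to0$ a.e.\ is a moving-average ergodic theorem with windows of width $\asymp n^{1/(1-a)-1/2}$ centered at $\asymp n^{1/(1-a)}$; it is plausible, but it is a nontrivial statement in its own right (cone-condition territory), and you do not establish it. Neither it nor the covariance claim is in the paper, because the paper's change of variable makes both unnecessary. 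Without that substitution I do not see how your approach can close.
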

%%We remark that the conclusion of Theorem~\ref{T:main} is only slightly easier to obtain if the convergence is taken in the mean, rather than %%pointwise.
We remark that the  conclusion of Theorem~\ref{T:main} can be easily
extended to all functions
$f\in L^p$, $g\in L^q$, where $p\in [1,+\infty]$ and  $q\in (1,+\infty]$ satisfy $1/p+1/q\leq 1$.\footnote{To see this, one uses a standard approximation argument and the fact that the averages $\frac{1}{N}\sum_{n=1}^NT^nf$ converge pointwise for $f\in L^p$ when $p\in [1,+\infty]$, and the same holds for  the averages $\frac{1}{N}\sum_{n=1}^NS^{a_n(\omega)}f$ for $f\in L^q$ when $q\in (1,+\infty]$ (see for example exercise 3 on page 78 of \cite{RW95}).}%%We should also work out the case where $\sigma_n=1/2$ for example, and the case
%%$\sigma_n\to 0$ but   $\sigma_n\gg n^{-a}$ for every $a>0$.

Combining the limit formula  of Theorem~\ref{T:main}
with the  estimate (see Lemma 1.6 in \cite{Chu})
$$
\int f\cdot \E(f|\X_1)\cdot \E(f|\X_2)\ d\mu\geq \Big(\int f \ d\mu\Big)^3,
$$
that holds for every non-negative function $f\in L^\infty(\mu) $ and sub-$\sigma$-algebras $\X_1$ and $\X_2$ of $\X$, we  deduce  the following:
\begin{corollary}\label{C:234}
With the assumptions of Theorem~\ref{T:main}, we get almost surely, that for every $A\in \X$ we have
$$
 \lim_{N\to\infty} \frac{1}{N}\sum_{n=1}^N \mu(A\cap T^{-n}A\cap S^{-a_n(\omega)}A)\geq \mu(A)^3.
$$
\end{corollary}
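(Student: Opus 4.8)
The plan is to specialize Theorem~\ref{T:main} to indicator functions and then integrate. Fix once and for all the universal set of $\omega$'s of full probability provided by Theorem~\ref{T:main}, and argue for an arbitrary $\omega$ in this set. Since this set does not depend on the data $(X,\X,\mu,T,S,f,g)$, it will in particular serve for every choice of $A$, which is what will let the quantifier ``for every $A\in\X$'' sit \emph{inside} the almost-sure statement. Given $A\in\X$, apply the theorem with $f=g=\mathbf{1}_A$: for $\mu$-almost every $x\in X$ we then have
\begin{equation*}
\frac1N\sum_{n=1}^N \mathbf{1}_A(T^nx)\cdot \mathbf{1}_A(S^{a_n(\omega)}x)\ \longrightarrow\ \E(\mathbf{1}_A\mid\mathcal{I}(T))(x)\cdot \E(\mathbf{1}_A\mid\mathcal{I}(S))(x)
\end{equation*}
as $N\to\infty$.

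Next I would multiply by $\mathbf{1}_A(x)$ and integrate in $x$. For every $N$ and every $x$ the average $\frac1N\sum_{n=1}^N \mathbf{1}_A(x)\,\mathbf{1}_A(T^nx)\,\mathbf{1}_A(S^{a_n(\omega)}x)$ lies in $[0,1]$, so the bounded convergence theorem applies; combined with the identity $\int \mathbf{1}_A(x)\,\mathbf{1}_A(T^nx)\,\mathbf{1}_A(S^{a_n(\omega)}x)\,d\mu(x)=\mu(A\cap T^{-n}A\cap S^{-a_n(\omega)}A)$ it yields
\begin{equation*}
\lim_{N\to\infty}\frac1N\sum_{n=1}^N \mu(A\cap T^{-n}A\cap S^{-a_n(\omega)}A)=\int \mathbf{1}_A\cdot \E(\mathbf{1}_A\mid\mathcal{I}(T))\cdot \E(\mathbf{1}_A\mid\mathcal{I}(S))\ d\mu.
\end{equation*}

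Finally I would invoke the inequality quoted from \cite{Chu}, applied to the non-negative function $\mathbf{1}_A$ and the sub-$\sigma$-algebras $\X_1=\mathcal{I}(T)$ and $\X_2=\mathcal{I}(S)$, which bounds the right-hand side below by $\left(\int \mathbf{1}_A\ d\mu\right)^3=\mu(A)^3$. This gives the claim. The argument is essentially routine; the only points deserving a word of care are the use of the universality of the probability-one set in Theorem~\ref{T:main} (noted above), the legitimacy of interchanging limit and integral — which here is just bounded convergence, since the relevant ergodic averages are uniformly bounded by $1$ — and the fact that the positivity hypothesis needed for the Chu inequality holds automatically for an indicator function.
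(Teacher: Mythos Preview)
Your proof is correct and follows exactly the approach the paper sketches: apply the limit formula of Theorem~\ref{T:main} with $f=g=\mathbf{1}_A$, integrate against $\mathbf{1}_A$ using bounded convergence, and invoke the inequality from \cite{Chu}. You have simply made explicit the routine justifications (universality of the full-probability set, bounded convergence) that the paper leaves implicit.
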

The {\em
upper  density} $\bar{d}(E)$ of a set $E\subset\Z^2$ is defined
by $\bar{d}(E) = \limsup_{N \to\infty}\frac{|E\cap [-N,N]^2|}{|[-N,N]^2 |}$.
Combining the previous multiple recurrence result with
a multidimensional version of Furstenberg's
correspondence principle \cite{FuK79}, we deduce the following:
\begin{corollary}\label{C:Sz1}
With the notation of Section~\ref{SS:setup},  let   $\sigma_n= n^{-a}$ for some $a\in (0,1/14)$. Then almost surely,  for every
${\bf v}_1,{\bf v}_2\in \Z^2$ and
$E\subset\Z^2$ we have
$$
\liminf_{N\to\infty}\frac{1}{N}\sum_{n=1}^N \bar{d}\bigl(E\cap (E-n{\bf v}_1)\cap
 (E-a_n(\omega){\bf v}_2) \bigr)\geq (\bar{d}(E))^3.
$$
\end{corollary}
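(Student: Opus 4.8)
The plan is to deduce Corollary~\ref{C:Sz1} from the multiple recurrence statement of Corollary~\ref{C:234} by a standard transfer argument. Fix once and for all an $\omega$ lying in the universal set of probability $1$ provided by Corollary~\ref{C:234}; since that set does not depend on the system or the functions involved, it in particular does not depend on $E$, ${\bf v}_1$, ${\bf v}_2$, so it suffices to prove the asserted inequality for this fixed $\omega$ and arbitrary $E\subset\Z^2$ and ${\bf v}_1,{\bf v}_2\in\Z^2$. If $\bar{d}(E)=0$ there is nothing to prove, so assume $\bar{d}(E)>0$.

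First I would invoke the multidimensional version of Furstenberg's correspondence principle from \cite{FuK79}: associated to the set $E$ there is a probability space $(X,\X,\mu)$, a measure preserving $\Z^2$-action $({\bf v}\mapsto R^{\bf v})$ generated by two commuting transformations, and a set $A\in\X$ with $\mu(A)\geq \bar{d}(E)$, such that for every finite family of vectors ${\bf w}_1,\dots,{\bf w}_k\in\Z^2$ one has
\begin{equation*}
\bar{d}\Big(\bigcap_{i=1}^k (E-{\bf w}_i)\Big)\geq \mu\Big(\bigcap_{i=1}^k R^{-{\bf w}_i}A\Big).
\end{equation*}
Applying this with the three vectors ${\bf 0}$, $n{\bf v}_1$, $a_n(\omega){\bf v}_2$, and setting $T\mathrel{\mathop:}=R^{{\bf v}_1}$ and $S\mathrel{\mathop:}=R^{{\bf v}_2}$ — which commute, being elements of the abelian $\Z^2$-action, and satisfy $R^{-n{\bf v}_1}=T^{-n}$, $R^{-a_n(\omega){\bf v}_2}=S^{-a_n(\omega)}$ — I obtain the termwise bound
\begin{equation*}
\bar{d}\bigl(E\cap (E-n{\bf v}_1)\cap (E-a_n(\omega){\bf v}_2)\bigr)\geq \mu\bigl(A\cap T^{-n}A\cap S^{-a_n(\omega)}A\bigr),
\end{equation*}
valid for every $n\in\N$.

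Averaging this inequality over $n=1,\dots,N$ and letting $N\to\infty$, the left-hand side has a $\liminf$ bounded below by the $\liminf$ (in fact the limit, by Corollary~\ref{C:234}) of the Cesàro averages of the right-hand side; applying Corollary~\ref{C:234} to the system $(X,\X,\mu,T,S)$ and the set $A$, and using that $t\mapsto t^3$ is increasing together with $\mu(A)\geq\bar{d}(E)$, gives
\begin{equation*}
\liminf_{N\to\infty}\frac1N\sum_{n=1}^N\bar{d}\bigl(E\cap (E-n{\bf v}_1)\cap (E-a_n(\omega){\bf v}_2)\bigr)\geq \mu(A)^3\geq (\bar{d}(E))^3,
\end{equation*}
which is the desired conclusion. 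I do not expect a genuine obstacle here: the only points requiring a little care are checking that the probability-$1$ set inherited from Corollary~\ref{C:234} is genuinely universal (this is built into its statement), the bookkeeping in passing from the $\Z^2$-action to the pair of commuting transformations $T,S$ — including the degenerate cases ${\bf v}_1={\bf 0}$ or ${\bf v}_1={\bf v}_2$, which are harmless precisely because Corollary~\ref{C:234} permits arbitrary commuting $T,S$ — and quoting the multidimensional correspondence principle in the form stated above.
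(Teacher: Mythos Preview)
Your proof is correct and follows precisely the approach indicated in the paper, which deduces the corollary by combining Corollary~\ref{C:234} with the multidimensional Furstenberg correspondence principle from \cite{FuK79}; you have simply spelled out the details (passing from the $\Z^2$-action to $T=R^{{\bf v}_1}$, $S=R^{{\bf v}_2}$, and handling the universality of the probability-$1$ set) that the paper leaves implicit.
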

We remark that  in the previous statement we could have used the upper Banach density $d^*$ in place of the upper density $\bar{d}$. This is defined
by $d^*(E) = \limsup_{|I| \to\infty}\frac{|E\cap I|}{|I|}$, where $|I|$ denotes the area of a rectangle $I$ and   the $\limsup$ is taken over all rectangles of $\Z^2$ with side lengths that increase to infinity. The same holds for the statement of Corollary~\ref{C:Sz2} below.

\subsubsection{Same iterates}
In our next result we study a randomized version of the averages~\eqref{E:Av1}. By $Tf$ we denote the composition $f\circ T$.
\begin{theorem}\label{T:mainAP}
With the notation of Section~\ref{SS:setup}, let   $\sigma_n= n^{-a}$ for some $a\in (0,1/2)$.
Then almost surely  the following holds: For every probability space $(X,\X,\mu)$,
commuting measure preserving transformations $T,S\colon X\to X$, and functions $f,g\in L^\infty(\mu)$, the averages  \begin{equation}\label{E:formulaAP}
 \frac{1}{N}\sum_{n=1}^N T^{a_n(\omega)}f\cdot S^{a_n(\omega)}g
 \end{equation}
converge in $L^2(\mu)$
 and their limit  equals the $L^2$-limit of the averages  $\frac{1}{N}\sum_{n=1}^N T^{n}f\cdot S^{n}g$ (this exists by \cite{CL84}). Furthermore,   if $T$ and $S$ are powers of the same transformation, then  the averages \eqref{E:formulaAP} converge pointwise.
\end{theorem}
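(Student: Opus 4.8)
The plan is to reduce the whole statement to the (known) asymptotics of the unweighted averages $\frac1N\sum_{n=1}^N T^nf\cdot S^ng$ via a resampling identity. Write $W_M:=\sum_{k=1}^M\sigma_k\sim M^{1-a}/(1-a)$. Since $X_k(\omega)=1$ exactly when $k$ belongs to the random sequence, putting $M=a_N(\omega)$ and using \eqref{E:=n} gives
\[
\frac1N\sum_{n=1}^N T^{a_n(\omega)}f\cdot S^{a_n(\omega)}g=\frac1N\sum_{k=1}^{M}X_k(\omega)\,T^kf\cdot S^kg,\qquad N=\sum_{k=1}^{M}X_k(\omega).
\]
Almost surely $\sum_{k=1}^{M}X_k(\omega)=(1+o(1))\,W_M$ as $M\to\infty$ by Lemma~\ref{L:estimate1}, and $a_N(\omega)\to\infty$, so $\tfrac1N\sum_{n=1}^N T^{a_n(\omega)}f\cdot S^{a_n(\omega)}g=\tfrac{W_{a_N(\omega)}}{N}\,B_{a_N(\omega)}(\omega)$ with $\tfrac{W_{a_N(\omega)}}{N}\to1$, where $B_M(\omega):=\tfrac1{W_M}\sum_{k=1}^M X_k(\omega)\,T^kf\cdot S^kg$. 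As these averages are bounded in $L^2(\mu)$, it is enough to prove that almost surely, for every system and all $f,g\in L^\infty(\mu)$ with $\|f\|_\infty,\|g\|_\infty\le1$, the $B_M(\omega)$ converge, as $M\to\infty$, in $L^2(\mu)$ (and pointwise, when $T,S$ are powers of a common transformation) to the limit $L$ of $\frac1M\sum_{k=1}^M T^kf\cdot S^kg$.

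Next I would split $B_M(\omega)=C_M+R_M(\omega)$, with $C_M:=\tfrac1{W_M}\sum_{k=1}^M\sigma_k\,T^kf\cdot S^kg$ and $R_M(\omega):=\tfrac1{W_M}\sum_{k=1}^M(X_k(\omega)-\sigma_k)\,T^kf\cdot S^kg$. Because the weights $\sigma_k=k^{-a}$ are positive, decreasing, and satisfy $\sum_{k\le M}(\sigma_k-\sigma_{k+1})k\le W_M$, a summation by parts shows that $C_M$ converges to the same limit as the Cesàro averages $\tfrac1M\sum_{k\le M}T^kf\cdot S^kg$; this limit exists in $L^2(\mu)$ by \cite{CL84}, and exists pointwise (a.e.) by \cite{Bou90} when $T$ and $S$ are powers of one transformation, and the Tauberian step is carried out in $L^2(\mu)$ in the first case and for a.e.\ $x$ in the second. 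It remains to show that $R_M(\omega)\to0$. Here independence and $\E_\omega(X_k(\omega)-\sigma_k)=0$ give the system-independent estimate $\E_\omega\bigl\|R_M(\omega)\bigr\|_{L^2(\mu)}^2=W_M^{-2}\sum_{k=1}^M\sigma_k(1-\sigma_k)\bigl\|T^kf\cdot S^kg\bigr\|_{L^2(\mu)}^2\le W_M^{-1}$. Choosing $M_j$ with $W_{M_j}\asymp j^2$ (so $W_{M_{j+1}}/W_{M_j}\to1$ and $\sum_j W_{M_j}^{-1}<\infty$), Markov's inequality and Borel--Cantelli give $R_{M_j}(\omega)\to0$ in $L^2$ almost surely, and since $\E_\omega\sum_j\bigl\|R_{M_j}(\omega)\bigr\|_{L^2}^2<\infty$, even $\sum_j|R_{M_j}(\omega)(x)|^2<\infty$, hence $R_{M_j}(\omega)(x)\to0$, for a.e.\ $x$. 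To remove the restriction to $(M_j)$ I would, for fixed $x$, apply Doob's maximal inequality to the $\omega$-martingale $M\mapsto\sum_{k=M_j+1}^M(X_k(\omega)-\sigma_k)(T^kf\cdot S^kg)(x)$; this bounds $\E_\omega\int\max_{M_j\le M<M_{j+1}}\bigl|R_M(\omega)-\tfrac{W_{M_j}}{W_M}R_{M_j}(\omega)\bigr|^2\,d\mu$ by a multiple of $(W_{M_{j+1}}-W_{M_j})/W_{M_j}^2$, which is summable over $j$, so another Borel--Cantelli argument finishes the gap-filling. Combining the two parts, $B_M(\omega)\to L$, which gives mean convergence in general and pointwise convergence for powers of one transformation.

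The crucial point, and what I expect to be the main obstacle, is that the exceptional $\omega$-set must be \emph{universal}, i.e.\ independent of $(X,\X,\mu,T,S,f,g)$. All the second-moment bounds above are uniform in the system, but the almost sure statements extracted from them via Borel--Cantelli a priori hold only on system-dependent full-measure sets, whose uncountable intersection need not be co-null. A few pieces can be put on a universal set directly: the strong law of Lemma~\ref{L:estimate1}, and the diagonal contribution to $\bigl\|R_M(\omega)\bigr\|_{L^2(\mu)}^2$, which is at most $W_M^{-2}\sum_{k\le M}(X_k(\omega)-\sigma_k)^2$ and hence controlled by the single almost sure event $\sum_{k\le M}(X_k(\omega)-\sigma_k)^2=O(W_M)$. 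For the rest I would use a separability/approximation argument: everything relevant takes place in the separable $T,S$-invariant sub-$\sigma$-algebra generated by $f$ and $g$, the limit $L$ and all the error bounds depend on the datum only through quantities that vary continuously with system-uniform rates, so it suffices to verify convergence for a countable family of data that is dense for the present purpose and then to intersect the corresponding countably many full-measure sets with the universal set on which the strong law, the variance estimates and the maximal inequalities hold.
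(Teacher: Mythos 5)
Your opening reduction (from the $a_n(\omega)$-average to $B_M(\omega)=\frac1{W_M}\sum_{k\le M}X_k(\omega)\,T^kf\cdot S^kg$, then splitting off $C_M$, then needing $R_M(\omega)\to0$) follows the paper faithfully. But you correctly diagnose the essential difficulty — universality of the null set — and then do not solve it. That is where the proposal breaks down.

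Your second-moment bound $\E_\omega\|R_M(\omega)\|_{L^2(\mu)}^2\le W_M^{-1}$ is a bound on an $\omega$-average, so the Borel--Cantelli null set you extract from it necessarily depends on the system $(X,\X,\mu,T,S,f,g)$. The proposed ``separability/approximation'' fix does not close the gap, because the quantifiers you need reversed are over \emph{all} measure-preserving systems, and there is no countable family of systems that is ``dense for the present purpose'': the error $\|R_M(\omega)\|_{L^2(\mu)}$ is not controlled uniformly by any finite amount of data about $(T,S,f,g)$ in a way that would permit a countable net plus continuity. Separability of the $\sigma$-algebra generated by $f,g$ lets you assume each \emph{individual} system is nice, but it does not reduce the uncountable collection of systems to a countable one. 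The same issue infects the Doob maximal step, whose bound is again an $\E_\omega$-average and hence system-dependent.

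What the paper actually does at this point is qualitatively different and is the idea you are missing. After van der Corput, the cross terms $\|\sum_n Y_{n+m}Y_n\,R^n f_m\|_{L^2(\mu)}$ (with $R=TS^{-1}$) are bounded, via Herglotz's theorem on positive-definite sequences and $\|f_m\|_\infty\le1$, by $\max_{t\in[0,1]}\bigl|\sum_n Y_{n+m}(\omega)Y_n(\omega)e(nt)\bigr|$. This quantity depends \emph{only} on $\omega$, not on the system. One then proves (Lemma~\ref{L:BasicAP} / Proposition~\ref{P:UnifEst}, a Chernoff/Salem--Zygmund type estimate for random trigonometric polynomials) that almost surely $\max_{m\le N}\max_t|\sum_{n\le N-m}Y_{n+m}Y_n e(nt)|\ll_\omega N^{1/2-a}\sqrt{\log N}$, which is summable along $[\gamma^k]$ when $a<1/2$. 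Because the bound is a statement about a single random sequence, the exceptional $\omega$-set is universal, and only then is it legitimate to quantify over all systems. In short: you must first dominate the system-dependent $L^2(\mu)$ error by a purely $\omega$-dependent random quantity (here a random exponential sum), and only then apply Borel--Cantelli. Your proposal applies Borel--Cantelli one step too early.
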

Our argument actually shows that   the averages \eqref{E:formulaAP} converge  pointwise if and only if the averages \eqref{E:Av1}   convergence pointwise. Furthermore, using our method, one can  get similar  convergence results
for  other  random multiple ergodic averages.
For instance, our method can be modified  and combined with the results from \cite{Ta08} and \cite{CFH09} to show that for every $\ell\in\N$,
if $\sigma_n=n^{-a}$ and $a$ is small enough (in fact any $a\in (0, 2^{-\ell})$ works),  then
 almost surely the averages
$\frac{1}{N}\sum_{n=1}^N T_1^{a_n(\omega)}f_1\cdots
T_\ell^{a_n(\omega)}f_\ell$ and  $\frac{1}{N}\sum_{n=1}^N
T_1^{a_n(\omega)}f_1\cdot T_2^{(a_n(\omega))^2}f_2\cdots
T_\ell^{(a_n(\omega))^\ell}f_\ell$ converge in the mean.

Combining Theorem~\ref{T:mainAP} with the  multiple recurrence result of Furstenberg and Katznelson \cite{FuK79}, we deduce the following:
\begin{corollary}\label{C:123}
With the assumptions of Theorem~\ref{T:mainAP}, we get almost surely,  that if $A\in \X$ has positive measure, then
$$
\lim_{N\to\infty} \frac{1}{N}\sum_{n=1}^N\mu(A\cap T^{-a_n(\omega)}A\cap S^{-a_n(\omega)}A)>0.
$$
\end{corollary}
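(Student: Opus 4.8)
The plan is to reduce Corollary~\ref{C:123} to the positivity of the corresponding averages with the deterministic iterates $n$ in place of $a_n(\omega)$, which is a consequence of the multidimensional Szemer\'edi theorem of Furstenberg and Katznelson~\cite{FuK79}. Fix $\omega$ in the universal set of full probability produced by Theorem~\ref{T:mainAP}. Given a probability space $(X,\X,\mu)$, commuting measure preserving transformations $T,S\colon X\to X$, and a set $A\in\X$ with $\mu(A)>0$, apply Theorem~\ref{T:mainAP} with $f=g=\mathbf{1}_A$: the averages $\frac{1}{N}\sum_{n=1}^N T^{a_n(\omega)}\mathbf{1}_A\cdot S^{a_n(\omega)}\mathbf{1}_A$ converge in $L^2(\mu)$, and their limit $L$ is also the $L^2$-limit of the averages $\frac{1}{N}\sum_{n=1}^N T^n\mathbf{1}_A\cdot S^n\mathbf{1}_A$ (which exists by~\cite{CL84}).

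Next I would pair these averages with the fixed function $\mathbf{1}_A\in L^2(\mu)$. Writing $\mu(A\cap T^{-a_n(\omega)}A\cap S^{-a_n(\omega)}A)=\int \mathbf{1}_A\cdot T^{a_n(\omega)}\mathbf{1}_A\cdot S^{a_n(\omega)}\mathbf{1}_A\,d\mu$ and using that the inner product against a fixed $L^2$ function is continuous, the $L^2$-convergence just recorded yields
\[
\lim_{N\to\infty}\frac{1}{N}\sum_{n=1}^N\mu(A\cap T^{-a_n(\omega)}A\cap S^{-a_n(\omega)}A)=\int \mathbf{1}_A\cdot L\,d\mu=\lim_{N\to\infty}\frac{1}{N}\sum_{n=1}^N\mu(A\cap T^{-n}A\cap S^{-n}A),
\]
the last equality following from the same continuity argument applied to the deterministic averages. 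In particular the limit on the left-hand side exists and equals the limit of the deterministic averages $\frac{1}{N}\sum_{n=1}^N\mu(A\cap T^{-n}A\cap S^{-n}A)$.

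It remains to observe that this common limit is strictly positive. This is exactly the content of the ergodic-theoretic form of the multidimensional Szemer\'edi theorem of Furstenberg and Katznelson~\cite{FuK79}, which gives $\liminf_{N\to\infty}\frac{1}{N}\sum_{n=1}^N\mu(A\cap T^{-n}A\cap S^{-n}A)>0$ for commuting $T,S$ and $\mu(A)>0$; since the limit exists, it coincides with this positive $\liminf$. I do not expect a genuine obstacle in this argument: the only point needing care is that the convergence supplied by Theorem~\ref{T:mainAP} takes place in a topology strong enough (namely $L^2$) to let us pass to the integral against $\mathbf{1}_A$, which it does, and everything else is a direct citation.
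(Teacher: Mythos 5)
Your argument is correct and is precisely the argument the paper intends: it pairs the $L^2$-convergence from Theorem~\ref{T:mainAP} (with $f=g=\mathbf{1}_A$) against $\mathbf{1}_A$ to identify the limit of the random averages with that of the deterministic ones, and then invokes the Furstenberg--Katznelson multidimensional Szemer\'edi theorem for positivity. The paper states this combination in a single line; you have simply spelled out the same steps.
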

Combining the previous multiple recurrence result with the correspondence
 principle of Furstenberg \cite{Fu81}, we deduce the following:
\begin{corollary}\label{C:Sz2}
With the notation of Section~\ref{SS:setup},  let   $\sigma_n= n^{-a}$ for some $a\in (0,1/2)$. Then almost surely, for every
${\bf v}_1,{\bf v}_2\in \Z^2$, and every
$E\subset\Z^2$ with $\bar{d}(E)>0$, we have
%%contains patterns of the form
$$
\liminf_{N\to\infty} \frac{1}{N}\sum_{n=1}^N\bar{d}\big(E\cap (E+a_n(\omega){\bf v}_1)\cap (E+a_n(\omega){\bf v}_2)\big)>0.
$$

\end{corollary}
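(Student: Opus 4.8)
The plan is to derive Corollary~\ref{C:Sz2} from the multiple recurrence statement in Corollary~\ref{C:123} via a multidimensional Furstenberg correspondence principle, in complete analogy with the way Corollary~\ref{C:Sz1} is derived from Corollary~\ref{C:234}. The only genuinely structural ingredient is that the set of full measure furnished by Theorem~\ref{T:mainAP}, and hence by Corollary~\ref{C:123}, is \emph{universal}: there is one set $\Omega_0\subset\Omega$ with $\P(\Omega_0)=1$ on which the conclusion of Corollary~\ref{C:123} holds simultaneously for every probability space and every pair of commuting measure preserving transformations. I would fix such an $\Omega_0$ and argue for an arbitrary but fixed $\omega\in\Omega_0$; this is precisely what legitimizes the quantifier order ``almost surely, for every $E$ and every ${\bf v}_1,{\bf v}_2$''.

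So fix $\omega\in\Omega_0$, vectors ${\bf v}_1,{\bf v}_2\in\Z^2$, and a set $E\subset\Z^2$ with $\bar{d}(E)>0$. By the correspondence principle of Furstenberg and Katznelson~\cite{FuK79} there exist a probability space $(X,\X,\mu)$, a measure preserving action $(R^{\bf v})_{{\bf v}\in\Z^2}$ of $\Z^2$ on it, and a set $A\in\X$ with $\mu(A)\geq\bar{d}(E)>0$, such that for every finite family ${\bf u}_1,\dots,{\bf u}_k\in\Z^2$,
$$
\bar{d}\big((E+{\bf u}_1)\cap\dots\cap(E+{\bf u}_k)\big)\ \geq\ \mu\big(R^{-{\bf u}_1}A\cap\dots\cap R^{-{\bf u}_k}A\big).
$$
Put $T\mathrel{\mathop:}=R^{{\bf v}_1}$ and $S\mathrel{\mathop:}=R^{{\bf v}_2}$; these are commuting measure preserving transformations of $(X,\X,\mu)$, and $R^{-a_n(\omega){\bf v}_i}=(R^{{\bf v}_i})^{-a_n(\omega)}$. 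Applying the displayed inequality with $({\bf u}_1,{\bf u}_2,{\bf u}_3)=(0,\,a_n(\omega){\bf v}_1,\,a_n(\omega){\bf v}_2)$ gives, for every $n$,
$$
\bar{d}\big(E\cap(E+a_n(\omega){\bf v}_1)\cap(E+a_n(\omega){\bf v}_2)\big)\ \geq\ \mu\big(A\cap T^{-a_n(\omega)}A\cap S^{-a_n(\omega)}A\big).
$$

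Averaging over $n\in\{1,\dots,N\}$ and taking $\liminf_{N\to\infty}$ finishes the proof: since $\omega\in\Omega_0$ and $\mu(A)>0$, Corollary~\ref{C:123} applied to the system $(X,\X,\mu,T,S)$ shows that $\frac1N\sum_{n=1}^N\mu(A\cap T^{-a_n(\omega)}A\cap S^{-a_n(\omega)}A)$ converges to a strictly positive limit, whence
$$
\liminf_{N\to\infty}\frac1N\sum_{n=1}^N\bar{d}\big(E\cap(E+a_n(\omega){\bf v}_1)\cap(E+a_n(\omega){\bf v}_2)\big)\ \geq\ \lim_{N\to\infty}\frac1N\sum_{n=1}^N\mu\big(A\cap T^{-a_n(\omega)}A\cap S^{-a_n(\omega)}A\big)\ >\ 0.
$$
Exactly the same argument, using the version of the correspondence principle for the upper Banach density, yields the statement with $d^*$ in place of $\bar{d}$, as noted in the remark following Corollary~\ref{C:Sz1}.

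I do not expect a serious obstacle here: everything past Corollary~\ref{C:123} is soft. The two points that need care are (i) getting the quantifiers in the right order, which rests entirely on the universality of $\Omega_0$ already established in Theorem~\ref{T:mainAP}; and (ii) choosing the sign conventions in the correspondence principle so that the termwise inequality runs from $\mu(A\cap T^{-a_n(\omega)}A\cap S^{-a_n(\omega)}A)$ \emph{up} to $\bar{d}\big(E\cap(E+a_n(\omega){\bf v}_1)\cap(E+a_n(\omega){\bf v}_2)\big)$, which is the direction needed to transfer the positive lower bound from the ergodic side to the combinatorial side.
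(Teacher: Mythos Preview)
Your proposal is correct and follows exactly the route indicated in the paper: combine Corollary~\ref{C:123} with the (multidimensional) Furstenberg correspondence principle, using the universality of the full-measure set $\Omega_0$ from Theorem~\ref{T:mainAP} to get the quantifiers in the right order. The paper simply cites \cite{Fu81} rather than \cite{FuK79} for the correspondence principle, but the argument is the same.
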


\subsubsection{Non-recurrence and non-convergence}

One may wonder whether the assumption that the transformations $T$ and $S$ commute
  can be removed from  the statements of  Theorems~\ref{T:main} and \ref{T:mainAP} and the related corollaries. It can definitely be weakened;  probably assuming that
  the group generated by $T$ and $S$ is nilpotent suffices, see for example \cite{BL02} where
mean convergence of the averages \eqref{E:Av1} is shown under such an  assumption.
On the other hand,
 constructions
of Berend (Ex 7.1 in \cite{Bere85}) and Furstenberg (page 40 in \cite{Fu81}) show that
Theorem~\ref{T:mainAP} and Corollary~\ref{C:123} are false if the assumption
that the transformations $T$ and $S$ commute is completely removed.
Next, we state a rather general result which implies that one has similar obstructions
when dealing with  Theorem~\ref{T:main} and Corollary~\ref{C:234}.

Given a probability space $(X,\X,\mu)$, we say that a measure preserving transformation $T\colon X\to X$
 is \emph{Bernoulli} if the measure preserving system $(X,\X,\mu, T)$ is isomorphic to a Bernoulli shift on finitely many symbols.
\begin{theorem}\label{T:NonRec-NonConv}
Let  $a,b\colon \N\to\Z\setminus\{0\}$ be  two injective sequences.
Then there exist   a probability space $(X,\X,\mu)$ and measure preserving transformations $T,S\colon X\to X$, both of them Bernoulli, such that
\begin{itemize}
  \item for some   $f,g\in L^\infty(\mu)$
the averages $\frac{1}{N}\sum_{n=1}^N\int T^{a(n)} f\cdot S^{b(n)}g \ d\mu$ diverge, and

  \item for some $A\in \mathcal{X}$ with $\mu(A)>0$ we have $\mu(T^{-a(n)}A\cap S^{-b(n)}A)=0$ for every $n\in \N$.
\end{itemize}
\end{theorem}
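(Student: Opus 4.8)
\emph{Proof proposal.} The plan is to reduce to two separate constructions and combine them by a direct product. A product of two systems, each isomorphic to a Bernoulli shift on finitely many symbols, is again such a system (if $T_i$ is isomorphic to the shift on $\Lambda_i^{\Z}$, then $T_1\times T_2$ is isomorphic to the shift on $(\Lambda_1\times\Lambda_2)^{\Z}$). Hence it suffices to build: (a) a system with Bernoulli $T_1,S_1$ and $f_1,g_1\in L^\infty$ such that $\frac1N\sum_{n=1}^N\int T_1^{a(n)}f_1\cdot S_1^{b(n)}g_1\,d\mu_1$ diverges, and (b) a system with Bernoulli $T_2,S_2$ and a set $A_2$ of positive measure with $\mu_2(T_2^{-a(n)}A_2\cap S_2^{-b(n)}A_2)=0$ for all $n$. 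The example required by the theorem is then $X_1\times X_2$ with $T=T_1\times T_2$, $S=S_1\times S_2$, $f=f_1\otimes 1$, $g=g_1\otimes 1$ and $A=X_1\times A_2$: the cross integral factors through the first coordinate, and the intersection defining the recurrence is a product with a null second factor.

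Both constructions use one device. Let $J$ be a countable set, put the uniform product measure on $X=\{-1,1\}^{J}$, and, given a bijection $\pi$ of $J$ and a function $s\colon J\to\{-1,1\}$, consider the \emph{signed coordinate permutation} $S$ defined by $S(x)_j=s(j)\,x_{\pi(j)}$. Then $S$ is an invertible measure preserving transformation and, by an immediate induction, $(S^k x)_0=\widetilde S(k)\,x_{\pi^k(0)}$ for all $k\in\Z$, where $\widetilde S\colon\Z\to\{-1,1\}$ is the multiplicative cocycle of $s$ along the orbit of $0$ (so $\widetilde S(0)=1$, and as $s$ ranges over all functions $J\to\{-1,1\}$ the restriction of $\widetilde S$ ranges over all $\{-1,1\}$-valued functions on $\Z$ with $\widetilde S(0)=1$). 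The point we will exploit is that if $\pi$ has finitely many orbits, each of them infinite, then $S$ is isomorphic to a Bernoulli shift on finitely many symbols: after identifying each orbit with $\Z$, $S$ becomes a product of maps of the form $x_k\mapsto s'(k)x_{k+1}$ on $\{-1,1\}^{\Z}$, and each of these is conjugate to the plain shift through the diagonal involution $D_t$, $D_t(x)_k=t(k)x_k$, with $t(k+1)=t(k)s'(k)$; hence $S$ is isomorphic to a finite power of the two-symbol Bernoulli shift.

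For (b), take $J=\Z\sqcup E$ with $E$ a countably infinite reservoir (equipped, after fixing a bijection $E\simeq\Z$, with its own shift), let $T_2$ be the shift on the $\Z$-part together with that shift on $E$ (so $T_2$ is Bernoulli and $(T_2^{a(n)}x)_0=x_{a(n)}$), and let $A_2=\{x:x_0=1\}$, so $\mu_2(A_2)=\tfrac12$. Choose $\pi$ so that $\pi^{b(n)}(0)=a(n)$ for every $n$: the assignment $0\mapsto 0$ and $b(n)\mapsto a(n)$ is a partial injection of $\Z$ because $a,b$ are injective and avoid $0$, we extend it to an injective sequence $(c_k)_{k\in\Z}$, $c_k:=\pi^k(0)$, sending the unconstrained $c_k$ into $E$, and then let $\pi$ act as the index shift on $\{c_k\}$ and as one more bi-infinite cycle on the (infinite) complement; thus $\pi$ has two infinite orbits and $S_2$ is Bernoulli on finitely many symbols. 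Finally pick $s$ so that $\widetilde S(b(n))=-1$ for all $n$ (set $\widetilde S$ equal to $-1$ at each $b(n)$ and to $1$ elsewhere — consistent since $b(n)\ne0$ — which prescribes the increments of $s$ along the orbit of $0$). Then $T_2^{-a(n)}A_2=\{x:x_{a(n)}=1\}$ and $S_2^{-b(n)}A_2=\{x:\widetilde S(b(n))\,x_{a(n)}=1\}=\{x:x_{a(n)}=-1\}$ are disjoint, so $\mu_2(T_2^{-a(n)}A_2\cap S_2^{-b(n)}A_2)=0$ for every $n$.

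For (a), use the same device with the same $\pi$ (so again $\pi^{b(n)}(0)=a(n)$) and take $f_1=g_1$ to be the coordinate function $x\mapsto x_0$. Then $T_1^{a(n)}f_1\cdot S_1^{b(n)}g_1=x_{a(n)}\cdot\widetilde S(b(n))\,x_{a(n)}=\widetilde S(b(n))$, so $\int T_1^{a(n)}f_1\cdot S_1^{b(n)}g_1\,d\mu_1=\widetilde S(b(n))$; since the $b(n)$ are distinct we may prescribe the values $\widetilde S(b(n))\in\{-1,1\}$ to be \emph{any} sequence, for instance $+1$ for $n$ in the blocks $[4^k,2\cdot 4^k)$ and $-1$ otherwise, making $\frac1N\sum_{n=1}^N\int T_1^{a(n)}f_1\cdot S_1^{b(n)}g_1\,d\mu_1$ oscillate. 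The step needing the most care is the bookkeeping that keeps both systems isomorphic to Bernoulli shifts on \emph{finitely} many symbols — that is, keeping the orbit structure of $\pi$ finite, the reservoir $E$ being what makes this possible in all cases — together with the routine checks that the signed permutations are measure preserving, that the displayed power formula holds, and that each ``signed shift'' is indeed conjugate to the plain Bernoulli shift.
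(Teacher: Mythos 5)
Your proof is correct. It shares the paper's central device: pick a permutation of coordinates in a Bernoulli shift space that aligns the coordinate read by $T^{a(n)}$ with the one read by $S^{b(n)}$, and twist $S$ by a sign (or value) flip so that the two membership constraints on that coordinate contradict each other. The implementations differ, though. The paper (Lemma~\ref{L:NonRecConv}) realizes $S$ as the conjugate $\psi_\pi^{-1}T\psi_\pi$ of the shift by a value-flipping coordinate permutation $\psi_\pi$ of $\{0,1\}^\Z$, so that $(S^nx)_0=1-x_{\pi(n)}$ and Bernoulliness of $S$ is automatic; it needs the ranges of $a,b$ to miss infinitely many integers in order to build $\pi$, and handles the general case by passing to $T^2,S^2$ with $2a,2b$. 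You instead work on $\{-1,1\}^J$ with a signed coordinate permutation, read off $(S^kx)_0=\widetilde{S}(k)\,x_{\pi^k(0)}$, use a reservoir $E$ so that the required $\pi$ (with $\pi^{b(n)}(0)=a(n)$) can always be built with two infinite orbits, and then verify Bernoulliness by hand via the diagonal involution $D_t$; the reservoir replaces the paper's parity trick. A further point in your favor: the theorem asks for a single $(X,T,S)$ satisfying both bullets, and the paper's written proof invokes the lemma with two different sets $F$ (one for divergence, one for non-recurrence), which strictly speaking yields two different systems; your explicit product construction is the clean way to merge them. What the paper's route buys in exchange is greater flexibility: Lemma~\ref{L:NonRecConv} prescribes $\mu(T^{-a(n)}A\cap S^{-b(n)}A)$ to be $0$ or $1/4$ according to an arbitrary $F\subset\N$, and this feeds the subsequent Krein--Milman corollary that any sequence in $[0,1/4]$ is attainable; your construction is tailored to exactly the two bullets of the theorem.
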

One can use a   variation of our argument  to extend Theorem~\ref{T:NonRec-NonConv} to sequences of bounded multiplicity, meaning sequences $(c(n))$ that satisfy
$\sup_{m\in\text{range}(c)}\#\{n\in\N\mid c(n)=m\}<+\infty$.
On the other hand, Theorem~\ref{T:NonRec-NonConv} cannot be extended to all sequences that take any given integer value a finite number of times.
For instance, it is not hard to show that the pair of sequences  $a(n)=[\sqrt{n}]$, $b(n)=n$, is good for (multiple) recurrence and mean convergence (see the proof of Theorem~2.7  in \cite{Fr09b}).
%%In fact, in this case, for any two measure preserving transformations $T$ and $S$, one  has (see the proof %%of Theorem~2.7  in \cite{Fr09b}) for $\mu$-almost every $x\in X$ that %%$\lim_{N\to\infty}\frac{1}{N}\sum_{n=1}^N f(T^{[\sqrt{n}]} x)\cdot g(S^nx)= \tilde{f}(x)\cdot  %%\tilde{g}(x)$
%%where $\tilde{f}=\E(f|\mathcal{I}_T)$ and $\tilde{g}=\E(f|\mathcal{I}_S)$. This implies that for every set %%$A\in \X$ with  positive measure one has $\mu(T^{-a(n)}A\cap S^{-b(n)}A)>0$ for some $n\in \N$.
\subsubsection{Further directions}
The restrictions on the range of the eligible parameter $a$ in
Theorem~\ref{T:main},  Theorem~\ref{T:mainAP}, and the related
corollaries, appears to be  far from best possible.\footnote{Any improvement in
the range of the eligible parameter $a$ in the statement of
Proposition~\ref{P:main'} or Proposition~\ref{P:mainAP}, would give
corresponding improvements in the statement of  Theorem~\ref{T:main}
and  Theorem~\ref{T:mainAP} and the related corollaries.} In fact,
any $a<1$ is expected to work, but it seems that new techniques are
needed to prove this. This larger range of parameters is known to
work for pointwise convergence of the
 averages $\frac{1}{N}\sum_{n=1}^N f(T^{a_n(\omega)}x)$ (see \cite{Bos83} for mean convergence, \cite{Bou88} for pointwise, and \cite{RW95} for a survey of related results). Furthermore, when $\sigma_n=\sigma\in (0,1)$ for every $n\in \N$, it is not clear whether the conclusion of Theorem~\ref{T:main} related to pointwise convergence holds
   (see Theorem~4 in \cite{LPWR94} for a related negative pointwise convergence result).
 %%\footnote{In fact,  any decreasing sequence $\sigma_n$ that satisfies $\sum_{n=1}^N\sigma_n/\log{N}\to %%\infty$ is expected to work for
%%the part of  Theorem~\ref{T:main} and Theorem~\ref{T:mainAP} that deals with convergence in the mean, as %%well as for Corollary~\ref{C:Sz1} and Corollary~\ref{C:Sz2}.}

 Regarding Theorem~\ref{T:main}, it seems very likely that similar
convergence results
hold when the iterates of the transformation $T$ are given by other ``good'' deterministic sequences, like polynomial sequences.
Our argument does not give such an extension because it relies crucially on the linearity of the
iterates of $T$. Furthermore, it seems likely that similar convergence results hold when the iterates
of $T$ and $S$ are both given by random versions of different fractional powers, chosen independently.
Again our present argument does not seem to apply to this case.

%%Lastly, as we previously mentioned, it may be the case that the conclusion of Theorems~\ref{T:main} and %%\ref{T:mainAP} hold for general measure preserving transformations $T$ and $S$ (without imposing any %%commutativity assumption).

\subsection{General conventions and notation}
%%If $S$ is a finite set and
%%$a\colon S\to \C$, then we write
%%$\E_{n\in S}a(n)= \frac{1}{|S|}\sum_{n\in S} a(n)$.
We use the symbol $\ll$  when
some expression is  majorized by a constant multiple of some  other expression. If this constant
depends on  the variables $k_1,\ldots, k_\ell$,
we write $\ll_{k_1,\ldots, k_\ell}$.  We say that $a_n\sim b_n$ if $a_n/b_n$ converges to a non-zero constant. We denote by $o_N(1)$
 a quantity that converges to zero when $N\to \infty$ and
all other parameters are fixed. We say that two sequences are
\emph{asymptotically equal} whenever convergence of one implies
convergence of the other and both limits coincide. If $(\Omega,
\mathcal{F}, \P)$ is a probability space, and $X$ is a random
variable, we set $\E_\omega(X)\mathrel{\mathop:}=\int X \ d\P$. We say that a property
holds almost surely if it holds outside of a set with probability
zero. We often suppress writing the variable $x$ when we refer to
functions and the variable $\omega$ when we refer to  random
variables or  random sequences. Lastly, the following notation will
be used throughout the article: $\N\mathrel{\mathop:}=\{1,2,\ldots\}$,
%%$\T=\R/\Z$,
$Tf\mathrel{\mathop:}=f\circ T$, $e(t)\mathrel{\mathop:}=e^{2\pi i t}$.

\section{Convergence for independent random iterates}
In this section we prove   Theorem~\ref{T:main}.
Throughout, we use the notation introduced in Section~\ref{SS:setup} and we also
let
$$
Y_n\mathrel{\mathop:}=X_n-\sigma_n,\quad   W_N\mathrel{\mathop:}=\sum_{n=1}^N\sigma_n.
$$
We remark that if $\sigma_n=n^{-a}$ for some $a\in (0,1)$, then $W_N\sim N^{1-a}$.
\subsection{Strategy of the proof} \label{SS:strategy1} Roughly speaking, in order to prove  Theorem~\ref{T:main} we go through the following  successive  comparisons:
\begin{align*}
\frac{1}{N}\sum_{n=1}^N f(T^nx) \cdot g(S^{a_n(\omega)}x)
&\approx
\frac{1}{W_N} \sum_{n=1}^N X_n(\omega) \cdot  f(T^{X_1(\omega)+\cdots+X_n(\omega)}x)\cdot g(S^nx)\\
&\approx
\frac{1}{W_N} \sum_{n=1}^N \sigma_n \cdot  f(T^{X_1(\omega)+\cdots+X_n(\omega)}x)\cdot g(S^nx)
\\
&\approx \frac{1}{N} \sum_{n=1}^N    f(T^{X_1(\omega)+\cdots+X_n(\omega)}x)\cdot g(S^nx)\\
&\approx
%%\E(g|\mathcal{I}(S))
 \tilde{g}(x) \cdot \frac{1}{N} \sum_{n=1}^N    f(T^{X_1(\omega)+\cdots+X_n(\omega)}x)\\
&\approx
%%\E(g|\mathcal{I}(S))
\tilde{g}(x)\cdot \frac{1}{N} \sum_{n=1}^N    f(T^nx)\\
&\approx
%%\E(g|\mathcal{I}(T)) \cdot \E(g|\mathcal{I}(S)),
\tilde{f}(x)\cdot \tilde{g}(x),
\end{align*}
where $A_N(\omega,x)\approx B_N(\omega,x)$ means that almost surely (the set of probability $1$ is universal), the expression $A_N(\omega,x)$ is asymptotically equal to $B_N(\omega,x)$
for almost every $x\in X$.
The second comparison is the most crucial  one; essentially one has to get good estimates for the $L^2$ norm of the averages
$\frac{1}{W_N} \sum_{n=1}^N (X_n(\omega)-\sigma_n) \cdot  T^{X_1(\omega)+\cdots+X_n(\omega)}f\cdot S^ng$.
We do this in two steps. First we use an elementary estimate of van der Corput twice  to get a bound that depends only on the
random variables $Y_n$, and then  estimate the resulting expressions using the independence of the variables $Y_n$.
Let us also mention that the fifth comparison follows immediately by applying the first three for $g=1$.
%%Interestingly, this second comparison is also used in a crucial way to derive the fifth comparison.
\subsection{A reduction}\label{SS:Reductions}
Our first goal is to reduce  Theorem~\ref{T:main} to proving the following result:
%%(this reduction
%%step uses only that $\sigma_n\ll (\log n)^{-1-\varepsilon}$ for some $\varepsilon>0$ and %%$n\sigma_n\to\infty$)????:
\begin{proposition}\label{P:main'}
Suppose that  $\sigma_n=n^{-a}$ for some $a\in (0,1/14)$ and let $\gamma>1$ be a real number.
 %%(In fact $\sigma_n\prec n^{1/14}$ should suffice,
%%in particular $\sigma_n=1/2$ should work fine.)
Then  almost surely the  following holds: For every probability space $(X,\X,\mu)$,
commuting measure preserving transformations $T,S\colon X\to X$, and functions $f,g\in L^\infty(\mu)$,
 %%and $\gamma\in \{1+1/k,k\in\N\}$,
 we have
  \begin{equation}\label{E:formula'}
\sum_{k=1}^{\infty}  \norm{\frac{1}{W_{[\gamma^k]}}
\sum_{n=1}^{[\gamma^k]} Y_n(\omega) \cdot
T^{X_1(\omega)+\cdots+X_n(\omega)}f\cdot S^ng}^2_{L^2(\mu)}<+\infty.
 \end{equation}
 %%(probably $\ll_{\omega} (\log N)^{-1-\varepsilon}$ suffices).
\end{proposition}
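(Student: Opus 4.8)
The statement to prove is a summable-in-$k$ bound on the $L^2(\mu)$-norms of the martingale-difference averages
$$
\frac{1}{W_{[\gamma^k]}} \sum_{n=1}^{[\gamma^k]} Y_n(\omega)\cdot T^{X_1(\omega)+\cdots+X_n(\omega)}f\cdot S^ng,
$$
valid almost surely in $\omega$, uniformly over all systems and all bounded $f,g$ (which we may as well normalize to $\norm{f}_{L^\infty},\norm{g}_{L^\infty}\le 1$). The key observation is that the left-hand side of \eqref{E:formula'} is itself a nonnegative random variable, so by the monotone convergence theorem it suffices to show that its expectation $\E_\omega$ is finite; and by Fubini (the $L^2(\mu)$-norm squared is an integral over $X$) it suffices to bound, for each fixed $k$, the quantity $\E_\omega\norm{\,\cdot\,}_{L^2(\mu)}^2$ by something summable in $k$. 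Since $W_{[\gamma^k]}\sim \gamma^{k(1-a)}$, a bound of the shape $\E_\omega\norm{\,\cdot\,}_{L^2(\mu)}^2 \ll W_N^{-\delta}$ for some $\delta>0$ and $N=[\gamma^k]$ (with an implied constant independent of the system and of $f,g$) will do, because $\sum_k \gamma^{-k(1-a)\delta}<\infty$.

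So the real task is: for $N=[\gamma^k]$, estimate
$$
\E_\omega \,\mu\text{-}\Big\|\frac{1}{W_N}\sum_{n=1}^N Y_n\, T^{X_1+\cdots+X_n}f\cdot S^ng\Big\|_{L^2}^2
= \frac{1}{W_N^2}\sum_{n,m=1}^N \E_\omega\Big( Y_nY_m \int T^{X_1+\cdots+X_n}f\cdot S^ng\cdot \overline{T^{X_1+\cdots+X_m}f\cdot S^mg}\; d\mu\Big).
$$
The strategy flagged in Section~\ref{SS:strategy1} is to apply the van der Corput inequality twice to the averages before taking expectations, reducing to expressions that, after expanding, involve only the $Y_n$'s (and correlations of iterates of $T$ and $S$ applied to $f$, $g$), and then to exploit that the $Y_n$ are independent, mean zero, and bounded ($|Y_n|\le 1$, $\E Y_n^2=\sigma_n(1-\sigma_n)\le\sigma_n$). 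Concretely: a first van der Corput step in $n$ introduces a shift parameter $h$ and replaces the squared average by an average over $h$ of inner products of the $n$-term and $(n+h)$-term sequences; here the commutativity of $T$ and $S$ is essential, since it lets one write $T^{X_1+\cdots+X_{n+h}}f\cdot \overline{T^{X_1+\cdots+X_n}f}$ as $T^{X_1+\cdots+X_n}(T^{X_{n+1}+\cdots+X_{n+h}}f\cdot \bar f)$ and pull the common $T^{X_1+\cdots+X_n}$, $S^n$ out of the integral onto the other factor. A second van der Corput step (or a direct second-moment expansion) in the remaining average then isolates a genuinely ``random'' sum to which independence applies: the cross terms in $\E_\omega(Y_nY_m\cdot(\text{function of }X\text{'s}))$ vanish unless the index sets overlap in a controlled way, and one is left with a diagonal-type sum of size $O(\sum_n\sigma_n)=O(W_N)$ against a total normalization $W_N^2$, plus error terms from the van der Corput truncations of size $O(H/N)$ or $O(1/H)$ for the auxiliary length $H$. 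Optimizing $H$ as a power of $N$ yields the power saving $W_N^{-\delta}$; the precise admissible range of $a$ (namely $a<1/14$) is exactly what makes the bookkeeping of these several error terms close in one's favor.

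**Main obstacle.** The subtle point — and the reason the argument is delicate — is that the $T$-iterates $X_1+\cdots+X_n$ are themselves random and are \emph{correlated across different $n$} (they are partial sums of the same sequence), so one cannot simply treat $T^{X_1+\cdots+X_n}f$ as a deterministic bounded function when taking $\E_\omega$: the $Y_n$ in front and the $X_j$ inside the exponent are not independent of each other. The van der Corput maneuver is designed precisely to disentangle this: after shifting, the only $X_j$'s that survive in the relevant factor are those with index in a short window $(n,n+h]$, which \emph{are} independent of $Y_n$ (whose randomness lives at index $n\le n$, wait — here one must be careful and use $Y_nY_m$ with $n\ne m$ and arrange that the surviving exponent block avoids both, or absorb the overlapping case into the diagonal). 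Getting the dependency structure exactly right, and checking that every error term produced by the two van der Corput applications and by the truncation of the $h$-average is genuinely $o(W_N^{-\delta})$ uniformly in the system, is where the work lies; everything else (monotone convergence, Fubini, the elementary estimate $W_N\sim N^{1-a}$, and the reduction to the dyadic-type sequence $[\gamma^k]$) is routine. I would therefore organize the proof as: (1) reduce to bounding $\E_\omega\norm{\cdot}_{L^2}^2$ by monotone convergence and Fubini; (2) state the van der Corput lemma and apply it twice, carefully recording the commutativity-based rearrangement of iterates; (3) take $\E_\omega$ and use independence/mean-zero of the $Y_n$ to collapse to a diagonal sum; (4) collect error terms, choose the auxiliary parameter as a power of $N$, and verify the power saving under the hypothesis $a<1/14$.
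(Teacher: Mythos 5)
Your high-level mechanics are broadly right — van der Corput applied twice, commutativity of $T$ and $S$ to rearrange iterates, the $Y_n$ being independent, mean-zero, and bounded, and the final optimization of the auxiliary lengths as powers of $N$ to get a power saving under $a<1/14$. But there is a genuine gap in the logical structure of your first reduction, and a missing trick in the middle.

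The reduction ``by monotone convergence and Fubini it suffices to bound $\E_\omega\norm{\cdot}_{L^2(\mu)}^2$ by something summable in $k$'' does not give what the Proposition asserts. Even if the bound on $\E_\omega\norm{\cdot}^2$ is \emph{uniform} over all systems and all $\|f\|_\infty,\|g\|_\infty\le 1$, you only conclude that for each fixed system $(X,\X,\mu,T,S,f,g)$ there is a null set of $\omega$ outside of which the series converges. The Proposition demands a \emph{single} probability-one set of $\omega$ that works for \emph{all} systems simultaneously, and the family of systems is uncountable, so you cannot take the union of exceptional null sets. The paper circumvents this by a different order of operations: first, using van der Corput and Cauchy--Schwarz \emph{deterministically in $\omega$}, together with a few almost-sure estimates that depend only on the $X_n$'s (Lemmas~\ref{L:estimate1} and \ref{L:estimate2}), one shows that on a universal a.s.\ set the quantity $A_N$ is bounded by $C_\omega N^{-d_3}+A_{5,N}(\omega)$, where the implicit constant and $A_{5,N}$ depend \emph{only} on $\omega$, not on the system or on $f,g$. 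Only \emph{then} does one take $\E_\omega$ of $A_{5,N}$, get summability along $[\gamma^k]$, and apply Borel--Cantelli; since $A_{5,N}$ is system-independent, the resulting a.s.\ set is universal. Your plan as stated takes $\E_\omega$ too early and loses this universality.

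The second issue is that you do not explain how the dependence of the iterates $T^{X_{n+1}+\cdots+X_{n+k}}$ on $\omega$ is removed so as to obtain a system-independent bound. After the second van der Corput step and a composition by $T^{-(X_1+\cdots+X_n)}S^n$, the surviving exponents are window sums such as $X_{n+1}+\cdots+X_{n+m+r}$, which are still random. The paper's trick is that for fixed $k$ we have $X_{n+1}+\cdots+X_{n+k}\in\{0,\ldots,k\}$, so one can sum over the finitely many possible values of these window sums, replacing each $T^{X_{n+1}+\cdots+X_{n+k}}\!(\cdot)$ factor by a sum over indicator events $\{\sum_{i=1}^{k}X_{n+i}=k_j\}$, and then bound $|f|,|g|\le 1$ to remove $T,S,f,g$ entirely. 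This is precisely what produces the expression $A_{5,N}$ that depends only on the $X_n$'s, at the cost of an extra polynomial factor (which is why one needs $b<c$ and the eventual constraint $a<1/14$). Without this decomposition, the expression you are left with is not system-independent and the quantifier problem above cannot be repaired.

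In short: your description of the van der Corput machinery and the diagonalization via independence is on the right track, but you must (i) postpone the passage to $\E_\omega$ until after the $L^2$-norm has been bounded, almost surely and universally, by a quantity depending only on $\omega$, and (ii) add the decomposition over the finite range of the window partial sums (and the resulting indicator functions) as the mechanism that achieves this system-independence.
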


%%\begin{proposition}\label{P:main'}
%%Suppose that  $\sigma_n=n^{-a}$ for some $a\in (0,1/14)$ and let $\gamma>1$ be a real number.
%%Then  almost surely the  following holds: For every probability space $(X,\X,\mu)$,
%%commuting measure preserving transformations $T,S\colon X\to X$, and functions $f,g\in L^\infty(\mu)$,
 %%we have
 %% \begin{equation}\label{E:formula'}
%%\frac{1}{W_{N}} \sum_{n=1}^{N} Y_n(\omega) \cdot  f(T^{X_1(\omega)+\cdots+X_n(\omega)}x)\cdot g(S^nx)\to 0
%% \end{equation}
%%for almost every $x\in X$.
%%\end{proposition}
We are going to establish this reduction in the next subsections.

%%\emph{For convenience in the forthcoming discussion we always assume, as we may, that
%%that both $|f|$ and $|g|$ are pointwise bounded by $1$ for all points in $X$.}
%%It seems that the reduction to Proposition~\ref{P:main'} can be done with
%%$f(T^{b(n)}x)$ in place of $f(T^nx)$, where $b(n)$ is any good sequence for the ergodic theorem.
%%On the other hand, our proof of Proposition~\ref{P:main'} depends crucially on the fact that $b(n)=n$.

 \subsubsection{First step}\label{SS:Step1}
 We assume, as we may, that
both functions $|f|$ and $|g|$ are pointwise bounded by $1$ for all points in $X$.
 By \eqref{E:=n} for every $\omega\in \Omega$ and $x\in X$ we have
$$
 \frac{1}{N}\sum_{n=1}^N f(T^nx)\cdot g(S^{a_n(\omega)}x)=
 \frac{1}{N}\sum_{n=1}^Nf(T^{X_1(\omega)+\cdots+X_{a_n(\omega)}(\omega)}x)\cdot g(S^{a_n(\omega)}x).
 $$
A moment of reflection shows that for every bounded sequence $(b_n)_{n\in\N}$, for every $\omega \in \Omega$,  the averages
$$
\frac{1}{N}\sum_{n=1}^N b_{a_n(\omega)}
$$
and the averages
$$
\frac{1}{W_N(\omega)} \sum_{n=1}^N  X_n(\omega)\cdot b_n,
$$
where $W_N(\omega)\mathrel{\mathop:}= X_1(\omega)+\cdots+X_N(\omega)$,
 are asymptotically equal as $N\to\infty$.  Moreover, Lemma~\ref{L:estimate1} in the Appendix
gives  that almost surely
 $ \lim_{N\to\infty} W_N(\omega)/W_N= 1$. Therefore, the last averages are asymptotically equal to the averages
 $$
\frac{1}{W_N} \sum_{n=1}^N  X_n(\omega)\cdot b_n.
$$
Putting these observations together, we see that for almost every $\omega\in \Omega$ the averages in \eqref{E:formula}
and the averages
\begin{equation}\label{E:MainAverages}
\frac{1}{W_N} \sum_{n=1}^N X_n(\omega) \cdot  f(T^{X_1(\omega)+\cdots+X_n(\omega)}x)\cdot g(S^nx)
\end{equation}
are asymptotically equal  for  every $x\in X$.

\subsubsection{Second step}\label{SS:Step2}
Next, we  study the limiting behavior of the averages \eqref{E:MainAverages} when the random variables
$X_n$ are replaced by their  mean. Namely, we study the averages
\begin{equation}\label{E:reduction1}
\frac{1}{W_N} \sum_{n=1}^N \sigma_n \cdot  f(T^{X_1(\omega)+\cdots+X_n(\omega)}x)\cdot g(S^nx).
\end{equation}
By Lemma~\ref{L:ChangeVar} in the Appendix,
for  every $\omega\in \Omega$ and $x\in X$    they are asymptotically equal to the averages
\begin{equation}\label{E:reduction2}
\frac{1}{N} \sum_{n=1}^N f(T^{X_1(\omega)+\cdots+X_n(\omega)}x)\cdot g(S^nx).
\end{equation}

\begin{lemma}
Suppose that $\sigma_n=n^{-a}$ for some $a\in (0,1)$.
%%\ll (\log n)^{-1-\varepsilon}$ for some $\varepsilon>0$.
 Then almost surely the following holds:
 For every probability space $(X,\cX,\mu)$, measure preserving transformations  $T,S\colon X\to X$,
 and functions $f,g\in L^\infty(\mu)$, we have
$$
\lim_{N\to \infty}\Big(\frac{1}{N} \sum_{n=1}^N f(T^{X_1(\omega)+\cdots+X_n(\omega)}x)\cdot g(S^nx)
-\frac{1}{N} \sum_{n=1}^N f(T^{X_1(\omega)+\cdots+X_n(\omega)}x)\cdot \E(g|\cI(S))(x)\Big)=0
$$
for almost every $x\in X$.
\end{lemma}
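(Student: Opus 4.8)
The plan is to isolate the universal null set of $\omega$ at the outset, reduce the statement to a single pointwise claim, and then prove that claim by decomposing $g$ into a coboundary part and a small remainder, using crucially that the $T$-exponents $W_n(\omega):=X_1(\omega)+\cdots+X_n(\omega)$ grow like $n^{1-a}=o(n)$, so that among the first $N$ terms the value of $f$ along these exponents changes only $o(N)$ times.

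First I would fix the exceptional $\omega$-set: let $\Omega_0$ be the event of Lemma~\ref{L:estimate1} on which $W_N(\omega)/W_N\to 1$; since $\sigma_n=n^{-a}$ with $a\in(0,1)$ we have $W_N\sim N^{1-a}$, hence $W_N(\omega)=o(N)$ for every $\omega\in\Omega_0$. Now fix $\omega\in\Omega_0$, an arbitrary probability space $(X,\cX,\mu)$ with measure preserving $T,S$, and $f,g\in L^\infty(\mu)$, and set $h:=g-\E(g\mid\cI(S))\in L^\infty(\mu)$, so that $\E(h\mid\cI(S))=0$. Since $\E(g\mid\cI(S))$ is $S$-invariant, the difference of the two averages in the statement equals, for a.e.\ $x$, the single average $\frac1N\sum_{n=1}^N f(T^{W_n(\omega)}x)\,h(S^nx)$, and it suffices to show this tends to $0$ for a.e.\ $x$.

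Next I would decompose $h$ by the mean ergodic theorem. Since $\E(h\mid\cI(S))=0$, von Neumann's theorem gives $\frac1M\sum_{m=0}^{M-1}S^mh\to 0$ in $L^2(\mu)$, while $h-\frac1M\sum_{m=0}^{M-1}S^mh=(I-S)v_M$ with $v_M:=\frac1M\sum_{m=1}^{M-1}\sum_{i=0}^{m-1}S^ih\in L^\infty(\mu)$. So for each $j\in\N$ I pick $M_j$ with $\norm{h-(I-S)v_{M_j}}_{L^2(\mu)}\le 2^{-j}$ and write $h=(I-S)v_j+r_j$, where $v_j:=v_{M_j}\in L^\infty(\mu)$ and $\norm{r_j}_{L^2(\mu)}\le 2^{-j}$. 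The contribution of $(I-S)v_j$ is handled by summation by parts: since $((I-S)v_j)(S^nx)=v_j(S^nx)-v_j(S^{n+1}x)$ telescopes, the average $\frac1N\sum_{n=1}^N f(T^{W_n(\omega)}x)\bigl(v_j(S^nx)-v_j(S^{n+1}x)\bigr)$ equals a boundary term of size $O(\norm{v_j}_{L^\infty(\mu)}/N)$ plus $\frac1N\sum_{n=2}^N\bigl(f(T^{W_n(\omega)}x)-f(T^{W_{n-1}(\omega)}x)\bigr)v_j(S^nx)$. Here is the crux: $W_n(\omega)-W_{n-1}(\omega)=X_n(\omega)\in\{0,1\}$, so the $n$-th summand vanishes unless $X_n(\omega)=1$ and is otherwise at most $2\norm{v_j}_{L^\infty(\mu)}$ in absolute value; hence the whole sum is bounded by $\frac{2}{N}\norm{v_j}_{L^\infty(\mu)}W_N(\omega)=o_N(1)$ because $\omega\in\Omega_0$. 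Thus for each fixed $j$ this coboundary contribution tends to $0$ as $N\to\infty$, for \emph{every} $x$.

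Finally, for the remainder term I would use $|f|\le 1$ to bound $\bigl|\frac1N\sum_{n=1}^N f(T^{W_n(\omega)}x)\,r_j(S^nx)\bigr|\le\frac1N\sum_{n=1}^N|r_j(S^nx)|$, which by Birkhoff's theorem converges for a.e.\ $x$ to $\E(|r_j|\mid\cI(S))(x)$. Combining the two parts gives $\limsup_{N\to\infty}\bigl|\frac1N\sum_{n=1}^N f(T^{W_n(\omega)}x)\,h(S^nx)\bigr|\le\E(|r_j|\mid\cI(S))(x)$ for a.e.\ $x$ and every $j$; and since $\sum_j\int\E(|r_j|\mid\cI(S))\,d\mu=\sum_j\norm{r_j}_{L^1(\mu)}\le\sum_j\norm{r_j}_{L^2(\mu)}<\infty$, the right-hand side tends to $0$ for a.e.\ $x$, forcing this $\limsup$ to be $0$ a.e. I expect no serious obstacle here; the two points demanding care are that the exceptional $\omega$-set is universal — which it is, being exactly $\Omega\setminus\Omega_0$ — and the observation, driving the summation-by-parts estimate, that the weights $f(T^{W_n(\omega)}x)$ change only $o(N)$ times; note also that, in contrast to Theorem~\ref{T:main}, commutativity of $T$ and $S$ is never used.
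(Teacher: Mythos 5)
Your proof is correct and follows essentially the same route as the paper: reduce to $g$ with $\E(g\mid\cI(S))=0$, approximate by $S$-coboundaries in $L^1$ or $L^2$, use summation by parts together with the observation that $f(T^{W_n(\omega)}x)$ changes only when $X_n(\omega)=1$, and control the remainder via the pointwise ergodic theorem. The only cosmetic difference is that you invoke Lemma~\ref{L:estimate1} to get $W_N(\omega)=\sum_{n\le N}X_n(\omega)=o(N)$, whereas the paper gets the same zero-density conclusion for $\{n:X_n(\omega)=1\}$ from the Borel--Cantelli-in-density Lemma~\ref{L:BorCan}; both yield the same universal $\Omega_0$.
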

\begin{proof}
It suffices to show that almost surely,
 if $\E(g|\cI(S))=0$, then   $\lim_{N\to\infty}A_N(f,g,\omega,x)= 0$ for almost every $x\in X$,
 where
$$
A_N(f,g,\omega,x)\mathrel{\mathop:}=
\frac{1}{N} \sum_{n=1}^N f(T^{X_1(\omega)+\cdots+X_n(\omega)}x)\cdot g(S^nx).
$$

First we consider functions $g$ of the form   $h-Sh$ where $h\in
L^\infty(\mu)$. Assuming, as we may,  that both $|f|$ and $|h|$ are
pointwise bounded by $1$ for all points in $X$, partial summation
gives that
$$
A_N(f,h-Sh,\omega,x)= \frac{1}{N}\sum_{n=1}^N\big(f(T^{X_1(\omega)+\cdots+X_n(\omega)}x)-
f(T^{X_1(\omega)+\cdots+X_{n-1}(\omega)}x)\big)\cdot
 h(S^nx) +o_{N}(1).
$$
The complex  norm of the last expression is bounded by a constant times the average
$$
\frac{1}{N}\sum_{n=1}^N{\bf 1}_{E_n}(\omega)
%%\quad \text{where} \quad  E_n=\{\omega\colon X_n(\omega)=1\}
$$
 where  $E_n\mathrel{\mathop:}=\{\omega\colon X_n(\omega)=1\}$.
Since $\P(E_n)=n^{-a}$, combining our assumption with   Lemma~\ref{L:BorCan} in the Appendix, we get that the last average
 converges  almost surely  to $0$ as $N\to\infty$. Therefore,
on a set $\Omega_0$ of probability $1$, that depends only on the random variables $X_n$,   we have
\begin{equation}\label{E:sd1}
\lim_{N\to\infty} A_N(f,h-Sh,\omega,x)=0
\end{equation}
for almost every $x\in X$.

 Furthermore,
 using  the trivial estimate
$$
|A_N(f,g,\omega,x)|\leq \frac{1}{N}\sum_{n=1}^N|g|(S^nx),
$$
and then applying the pointwise ergodic theorem for the
transformation $S$, we get for
every $\omega\in \Omega$ that
 %%almost surely that
\begin{equation}\label{E:sd2}
\int \limsup_{N\to\infty}|A_N(f,g,\omega,\cdot)| \ d\mu \leq
%%\E(|g| |\mathcal{I}(S))
\norm{g}_{L^1(\mu)}.
\end{equation}
%%for almost every $x\in X$. {\bf NOT TRUE SEE WHAT EMMANUEL PROPOSED}

Since every function $g\in L^\infty(\mu)$ that satisfies $\E(g|\cI(S))=0$ can be approximated in $L^1(\mu)$
arbitrarily well by functions of the form $h-Sh$ with $h\in L^\infty(\mu)$,
combining \eqref{E:sd1} and \eqref{E:sd2}, we get for every $\omega \in \Omega_0$,
that if $\E(g|\cI(S))=0$, then   $\lim_{N\to\infty}A_N(f,g,\omega,x)=0$ for almost every $x\in X$.
This completes the proof.
\end{proof}

\subsubsection{Third step}\label{SS:Step3}
We next turn our attention to the study of the limiting behavior
  of the averages
\begin{equation}\label{E:3rdStep}
\frac{1}{N} \sum_{n=1}^N f(T^{X_1(\omega)+\cdots+X_n(\omega)}x).
\end{equation}

\begin{lemma}
Let   $\sigma_n=n^{-a}$ for some $a\in (0,1/14)$.
Then almost surely the following holds: For every probability space  $(X,\X,\mu)$, measure preserving transformation $T\colon X\to X$,  and function $f\in L^\infty(\mu)$,  the
averages in \eqref{E:3rdStep} converge to $\E(f|\mathcal{I}(T))(x)$ for almost every $x\in X$.
 %%are asymptotically equal to the ergodic averages $\frac{1}{N} \sum_{n=1}^N f(T^nx)$, and as a consequence, they %%converge pointwise.
\end{lemma}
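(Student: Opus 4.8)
The plan is to prove this lemma exactly along the lines of Section~\ref{SS:strategy1}, specialized to $g\equiv1$: it will follow by combining the $g\equiv1$ versions of the first three reduction steps with the classical pointwise ergodic theorem.

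Write $c_n=c_n(\omega,x)\mathrel{\mathop:}=f(T^{X_1(\omega)+\cdots+X_n(\omega)}x)$, so that the averages in \eqref{E:3rdStep} are $D_N\mathrel{\mathop:}=\frac1N\sum_{n=1}^N c_n$. By Lemma~\ref{L:ChangeVar} applied with $g\equiv1$ (this uses only $a\in(0,1)$), $D_N$ is asymptotically equal, for every $\omega$ and $x$, to $A_N\mathrel{\mathop:}=\frac1{W_N}\sum_{n=1}^N\sigma_n c_n$. Splitting $\sigma_n=X_n(\omega)-Y_n(\omega)$ gives $A_N=B_N-C_N$, where $B_N\mathrel{\mathop:}=\frac1{W_N}\sum_{n=1}^N X_n(\omega)c_n$ and $C_N\mathrel{\mathop:}=\frac1{W_N}\sum_{n=1}^N Y_n(\omega)c_n$. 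For $B_N$ I would repeat the change of variables of Section~\ref{SS:Step1}: taking $b_n\mathrel{\mathop:}=c_n$, the identity \eqref{E:=n} gives $b_{a_m(\omega)}=f(T^mx)$, and the argument of Section~\ref{SS:Step1} (together with Lemma~\ref{L:estimate1}, which supplies a universal full-probability set of $\omega$'s) shows that $B_N$ is asymptotically equal, for every $x$, to the genuine Ces\`aro averages $\frac1N\sum_{m=1}^N f(T^mx)$. By the pointwise ergodic theorem the latter converge to $\E(f|\mathcal{I}(T))(x)$ for a.e.\ $x$, so $B_N\to\E(f|\mathcal{I}(T))(x)$ a.e. It therefore remains to prove that, on a universal set of $\omega$'s of full probability, $C_N\to0$ for a.e.\ $x$, for every system.

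Now $C_N$ is precisely the $g\equiv1$ instance of the quantity controlled by Proposition~\ref{P:main'}: applying that proposition with $S\mathrel{\mathop:}=T$, $g\mathrel{\mathop:}=1$ and any fixed $\gamma>1$, we obtain on a universal full-probability set that $\sum_k\norm{C_{[\gamma^k]}}_{L^2(\mu)}^2<+\infty$ for every system, whence $C_{[\gamma^k]}(x)\to0$ as $k\to\infty$ for a.e.\ $x$. To pass to the full sequence I would use the structural fact underlying the whole scheme: $A_N$ is an average of the numbers $c_n$ against probability weights $\sigma_n/W_N$, and $B_N$ is such an average up to the factor $W_N(\omega)/W_N\to1$, while $|c_n|\le1$; consequently both $A_N$ and $B_N$, and hence $C_N=B_N-A_N$, oscillate over a block $[\gamma^k,\gamma^{k+1})$ by an amount $\ll 1-\gamma^{-(1-a)}$, uniformly in $k$ large (using $W_{[\gamma^k]}/W_{[\gamma^{k+1}]}\to\gamma^{-(1-a)}$). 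Combining this with $C_{[\gamma^k]}(x)\to0$ gives $\limsup_N|C_N(x)|\ll 1-\gamma^{-(1-a)}$ for a.e.\ $x$, and letting $\gamma$ run through a sequence decreasing to $1$ (intersecting the corresponding universal sets keeps the $\omega$-set universal of full probability) yields $C_N\to0$ a.e.\ for the full sequence. Then $A_N=B_N-C_N\to\E(f|\mathcal{I}(T))(x)$ a.e., and by the asymptotic equality of $A_N$ and $D_N$ the averages in \eqref{E:3rdStep} converge to $\E(f|\mathcal{I}(T))(x)$ for a.e.\ $x$, on a universal set of $\omega$'s of full probability, as claimed.

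The only substantive ingredient is Proposition~\ref{P:main'}, which we may assume; it is there --- via two applications of van der Corput's inequality followed by the independence estimates for the $Y_n$ --- that the restrictive hypothesis $a\in(0,1/14)$ enters, which is why the present lemma carries the same restriction. Inside the present argument the one point demanding care is keeping the error terms in the various asymptotic equalities explicit, so that the passage from the subsequence $\{[\gamma^k]\}$ to all $N$ is legitimate. One could instead attempt a self-contained proof via the ``plateau'' identity $\frac1N\sum_{n\le N}f(T^{X_1(\omega)+\cdots+X_n(\omega)}x)=\frac1{a_M(\omega)}\sum_{m<M}\bigl(a_{m+1}(\omega)-a_m(\omega)\bigr)f(T^mx)+o_N(1)$, with $M\mathrel{\mathop:}=X_1(\omega)+\cdots+X_N(\omega)$, combined with a martingale-difference analysis of the gap fluctuations $a_{m+1}-a_m-\E\bigl(a_{m+1}-a_m\mid a_m\bigr)$; this controls the $L^2(\mu)$ norm for every $a\in(0,1)$, but making the exceptional $\omega$-set independent of the system would then seem to require a Salem--Zygmund type estimate for the associated random trigonometric polynomial, so the route through Proposition~\ref{P:main'} is the cleaner one.
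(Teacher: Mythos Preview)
Your proof is correct and follows essentially the same route as the paper: both arguments combine Lemma~\ref{L:ChangeVar}, Proposition~\ref{P:main'} with $g\equiv1$, the change of variables from Section~\ref{SS:Step1}, and the pointwise ergodic theorem. The only organizational difference is that the paper applies Corollary~\ref{C:lacunary} up front to reduce to lacunary subsequences (which strictly speaking requires first passing to non-negative $f$, as in the Last Step), whereas you carry out the lacunary-to-full passage by hand via the oscillation bound on $A_N$ and $B_N$; since your bound uses only $|c_n|\le1$ and the non-negativity of the weights $\sigma_n$ and $X_n(\omega)$, it sidesteps that reduction, but the underlying mechanism is the same as Lemma~\ref{L:Farao}.
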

\begin{remark}
Improving the range of the parameter $a$  would not lead to
corresponding improvements in our main results. On the other hand,
the restricted range we used enables us to give a succinct proof
using Proposition~\ref{P:main'}.
\end{remark}
\begin{proof}
We assume, as we may, that the function $|f|$ is pointwise bounded
by $1$ for all points in $X$. First notice that by
Lemma~\ref{L:ChangeVar} in the Appendix, for every $\omega\in
\Omega$ and $x\in X$, the averages in \eqref{E:3rdStep} are
asymptotically equal to the averages
$$
 \frac{1}{W_N}\sum_{n=1}^{N}\sigma_n \cdot f(T^{X_1(\omega)+\cdots+X_n(\omega)}x)
$$
where $W_N\mathrel{\mathop:}=\sum_{n=1}^Nn^{-a}\sim N^{1-a}$.
Combining this observation with Corollary~\ref{C:lacunary} on the
Appendix, we deduce that it suffices to show that almost surely the
following holds:
  For every probability space  $(X,\X,\mu)$, measure preserving transformation $T\colon X\to X$,
  function $f\in L^\infty(\mu)$, and $\gamma\in \{1+1/k,k\in\N\}$, we have
\begin{equation}\label{E:3rdStep'}
\lim_{N\to\infty}\frac{1}{W_{[\gamma^N]}} \sum_{n=1}^{[\gamma^N]} \sigma_n \cdot f(T^{X_1(\omega)+\cdots+X_n(\omega)}x)= \E(f|\mathcal{I}(T))(x)
\end{equation}
for almost every $x\in X$.

Using Proposition~\ref{P:main'} for  $g=1$, we get that almost
surely (the set of probability $1$ depends only on the random
variables $X_n$), for every  $\gamma\in \{1+1/k,k\in\N\}$,   the
averages in \eqref{E:3rdStep'} are   asymptotically equal to the
averages
$$
\frac{1}{W_{[\gamma^N]}}\sum_{n=1}^{[\gamma^N]} X_n(\omega)\cdot  f(T^{X_1(\omega)+\cdots+X_n(\omega)}x)
$$
for  almost every $x\in X$. Hence, it suffices to study the limiting behavior of the averages
$$
\frac{1}{W_N}\sum_{n=1}^{N} X_n(\omega)\cdot  f(T^{X_1(\omega)+\cdots+X_n(\omega)}x).
$$
Repeating
the argument used in Section~\ref{SS:Step1} (with $g=1$),  we deduce that for every $\omega\in \Omega$ and $x\in X$,  they are asymptotically  equal to the averages
$$
\frac{1}{N}\sum_{n=1}^N  f(T^{X_1(\omega)+\cdots+X_{a_n(\omega)}(\omega)}x)=
\frac{1}{N} \sum_{n=1}^N f(T^nx)
$$
where the last equality follows from \eqref{E:=n}.
Finally, using the pointwise ergodic theorem we get  that the last averages converge to $\E(f|\mathcal{I}(T))(x)$ for almost every $x\in X$.
This completes the proof.
\end{proof}

\subsubsection{Last step}
We prove Theorem~\ref{T:main} by combining Proposition~\ref{P:main'}
with the arguments in the previous three steps.  We start with
Proposition~\ref{P:main'}. It gives  that there exists a set
$\Omega_0\in \mathcal{F}$ of probability $1$ such that  for every
$\omega\in \Omega_0$ the following holds: For every probability
space $(X,\X,\mu)$, commuting measure preserving transformations
$T,S\colon X\to X$, functions $f,g\in L^\infty(\mu)$, and $\gamma\in
\{1+1/k, k\in \N\}$, we have
 \begin{equation}\label{E:serconverges}
\sum_{N=1}^\infty
\norm{S_{[\gamma^N]}(\omega,\cdot)}^2_{L^2(\mu)}<+\infty
\end{equation}
where
$$
S_N(\omega,x)\mathrel{\mathop:}=\frac{1}{W_{N}} \sum_{n=1}^{N} Y_n(\omega) \cdot  f(T^{X_1(\omega)+\cdots+X_n(\omega)}x)\cdot g(S^nx).
 $$
 In the remaining argument $\omega$ is assumed to belong to the  aforementioned  set $\Omega_0$.
Notice that  \eqref{E:serconverges} implies that
 $$
\lim_{N\to\infty} S_{[\gamma^N]}(\omega,x)= 0 \qquad \text{for almost every } x\in X.
 $$

 We conclude that  for almost every $x\in X$, for  every $\gamma\in \{1+1/k, k\in \N\}$, the difference
$$
\frac{1}{W_{[\gamma^N]}} \sum_{n=1}^{[\gamma^N]} X_n(\omega) \cdot  f(T^{X_1(\omega)+\cdots+X_n(\omega)}x)\cdot g(S^nx)
- \frac{1}{W_{[\gamma^N]}} \sum_{n=1}^{[\gamma^N]} \sigma_n \cdot  f(T^{X_1(\omega)+\cdots+X_n(\omega)}x)\cdot g(S^nx)
 $$
 converges to $0$ as $N\to \infty$.
 In Sections~\ref{SS:Step2} and \ref{SS:Step3} we proved that   for almost every $x\in X$ we have
 $$
 \lim_{N\to\infty} \frac{1}{W_{N}} \sum_{n=1}^{N} \sigma_n \cdot  f(T^{X_1(\omega)+\cdots+X_n(\omega)}x)\cdot g(S^nx)=
 \tilde{f}(x)\cdot \tilde{g}(x),
 $$
where
$\tilde{f}\mathrel{\mathop:}= \E(f|\mathcal{I}(T))$, and
$\tilde{g}\mathrel{\mathop:}= \E(g|\mathcal{I}(S))$.
We deduce from the above that
 for almost every $x\in X$,  for every $\gamma\in \{1+1/k, k\in \N\}$, we have that
$$
\lim_{N\to\infty} \frac{1}{W_{[\gamma^N]}} \sum_{n=1}^{[\gamma^N]} X_n(\omega) \cdot  f(T^{X_1(\omega)+\cdots+X_n(\omega)}x)\cdot g(S^nx)
= \tilde{f}(x)\cdot \tilde{g}(x).
$$
Since the sequence $(W_n)$ satisfies the assumptions of Corollary~\ref{C:lacunary} in the Appendix,
we conclude that  for  non-negative functions $f,g\in L^\infty(\mu)$, for almost every $x\in X$, we have
\begin{equation}\label{E:sdf}
\lim_{N\to\infty}\frac{1}{W_{N}} \sum_{n=1}^{N} X_n(\omega) \cdot  f(T^{X_1(\omega)+\cdots+X_n(\omega)}x)\cdot g(S^nx)
= \tilde{f}(x)\cdot \tilde{g}(x).
\end{equation}
Splitting the real and imaginary part of the function $f$  as a difference of two non-negative functions,  doing the same for the function $g$,  and using the linearity
 of the operator $f\to\tilde{f}$, we  deduce that  \eqref{E:sdf} holds for arbitrary $f,g\in L^\infty(\mu)$.

Lastly, combining the previous identity and the argument used in Section~\ref{SS:Step1}, we deduce that for almost every $x\in X$ we have
$$
\lim_{N\to\infty}\frac{1}{N} \sum_{n=1}^{N}   f(T^nx)\cdot g(S^{a_n(\omega)}x)
= \tilde{f}(x)\cdot \tilde{g}(x).
$$
We have therefore established:
\begin{proposition}
If Proposition~\ref{P:main'} holds, then Theorem~\ref{T:main} holds.
\end{proposition}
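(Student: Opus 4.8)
The plan is to chain Proposition~\ref{P:main'} together with the three ``step'' lemmas of Sections~\ref{SS:Step1}--\ref{SS:Step3}, applying the proposition along the lacunary times $[\gamma^N]$ for $\gamma$ ranging over the countable family $\{1+1/k:k\in\N\}$. First I would invoke Proposition~\ref{P:main'} once for each such $\gamma$ and intersect the resulting probability-$1$ events, together with the full-measure sets produced by the lemmas of Sections~\ref{SS:Step2} and \ref{SS:Step3} (all of which depend only on the variables $X_n$), to obtain a single universal set $\Omega_0\in\mathcal F$ of probability $1$. Fix $\omega\in\Omega_0$, a system $(X,\X,\mu)$, commuting $T,S$, and $f,g\in L^\infty(\mu)$. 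Writing $A^\gamma_N(\omega,\cdot)$ for the function inside the norm in \eqref{E:formula'} at scale $[\gamma^N]$, the statement \eqref{E:formula'} is exactly $\sum_N\norm{A^\gamma_N(\omega,\cdot)}_{L^2(\mu)}^2<+\infty$; monotone convergence applied to the partial sums of $|A^\gamma_N(\omega,\cdot)|^2$ then forces $A^\gamma_N(\omega,x)\to0$ for $\mu$-a.e.\ $x$. Consequently, along the times $[\gamma^N]$ the average \eqref{E:MainAverages} (with the $X_n$) and the average \eqref{E:reduction1} (with the $\sigma_n$) are asymptotically equal for a.e.\ $x$.

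Next I would identify the limit of \eqref{E:reduction1}, which in fact converges along every $N$: by Lemma~\ref{L:ChangeVar} it is asymptotically equal to \eqref{E:reduction2}; by the lemma of Section~\ref{SS:Step2} the latter is asymptotically equal to $\E(g|\cI(S))(x)\cdot\frac1N\sum_{n=1}^N f(T^{X_1(\omega)+\cdots+X_n(\omega)}x)$; and by the lemma of Section~\ref{SS:Step3} this last quantity converges to $\E(f|\cI(T))(x)=\tilde f(x)$ for a.e.\ $x$. Combining with the previous paragraph, for every $\gamma\in\{1+1/k:k\in\N\}$ the average \eqref{E:MainAverages} along $[\gamma^N]$ converges to $\tilde f(x)\tilde g(x)$ for a.e.\ $x$.

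To remove the lacunary restriction, I would use that $(W_N)$ satisfies the hypotheses of Corollary~\ref{C:lacunary} (since $W_N\sim N^{1-a}$), which upgrades convergence along every $[\gamma^N]$, $\gamma=1+1/k$, to convergence along all $N$, provided the averaged quantity is non-negative — the point being the standard sandwiching of a monotone partial sum between consecutive lacunary scales. So I would first establish \eqref{E:sdf} for $f,g\ge0$, then write the real and imaginary parts of $f$ (and of $g$) as differences of non-negative bounded functions and use the linearity of $f\mapsto\tilde f$ and $g\mapsto\tilde g$ to deduce \eqref{E:sdf} for arbitrary $f,g\in L^\infty(\mu)$. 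Finally, running the reasoning of Section~\ref{SS:Step1} pointwise in $x$ — with $b_n=f(T^{X_1(\omega)+\cdots+X_n(\omega)}x)\,g(S^nx)$, using \eqref{E:=n} and $W_N(\omega)/W_N\to1$ — shows that \eqref{E:MainAverages} and the original average $\frac1N\sum_{n=1}^N f(T^nx)g(S^{a_n(\omega)}x)$ are asymptotically equal, which gives \eqref{E:formula} and hence Theorem~\ref{T:main}.

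Within this reduction the genuine obstacle, rather than a formality, is the lacunary-to-full-scale passage: it works only for non-negative averages, which is why the decomposition of $f$ and $g$ into positive parts together with the linearity of $f\mapsto\tilde f$ is essential, and why Corollary~\ref{C:lacunary} must be applied to the $X_n$-average (which is $\ge0$ when $f,g\ge0$) and not to the $Y_n$-average. Closely related is the bookkeeping needed to keep every exceptional null set in $\Omega$ independent of the system $(X,\X,\mu,T,S,f,g)$; this is precisely why Proposition~\ref{P:main'} and the three step-lemmas were stated in the ``almost surely, then for every system'' form, so that a single countable intersection yields the universal probability-$1$ set demanded by Theorem~\ref{T:main}.
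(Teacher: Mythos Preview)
Your proposal is correct and follows essentially the same route as the paper's own argument in the ``Last step'' subsection: apply Proposition~\ref{P:main'} along the countable family $\gamma=1+1/k$ to get $S_{[\gamma^N]}\to 0$ a.e., combine with Steps~2--3 to identify the limit of the $\sigma_n$-average, use Corollary~\ref{C:lacunary} on the non-negative $X_n$-average to pass from lacunary to full scales, then decompose general $f,g$ into non-negative parts and finish via Step~1. You have also correctly flagged the one genuine subtlety, namely that the lacunary trick must be applied to the $X_n$-average (non-negative when $f,g\geq 0$) rather than to the $Y_n$-difference.
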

In the next subsection we prove Proposition~\ref{P:main'}.
%%\begin{lemma}
%%Let $(v_n)_{n\in\N}$ be a bounded sequence of vectors in a normed space and $(a_n)_{n\in \N}$ be an %%increasing sequence of integers that satisfies $a_{n+1}-a_n=o(a_n)$.
%%Suppose also that
%%$$
%%\lim_{N\to \infty}\norm{\frac{1}{a_N}\sum_{n=1}^{a_N}v_n}=0.
%%$$

%%Then
%%$$
%%\lim_{N\to \infty}\norm{\frac{1}{N}\sum_{n=1}^{N}v_n}=0.
%%$$
%%\end{lemma}

\subsection{Proof of Proposition~\ref{P:main'}} The proof of Proposition~\ref{P:main'} splits in two parts.
First we estimate  the $L^2$ norm of the averages
$\frac{1}{W_N}\sum_{n=1}^N Y_n  \cdot T^{X_1+\cdots+X_n}f\cdot S^ng$
 by an expression
that is independent of  the transformations $T,S$ and the  functions $f,g$.
%%This step is  relatively straightforward;
 The main idea is to use van der Corput's Lemma (see Lemma~\ref{L:VDC2} in the Appendix) enough times to get the desired  cancelation, allowing enough flexibility on the parameters involved to ensure that certain terms become negligible. Subsequently, using moment estimates, we show that the resulting expression is almost surely summable along exponentially growing sequences of integers.

Before delving into the details we make some preparatory remarks that will help us ease our notation. We assume that both functions $f,g$ are bounded by $1$.
We remind the reader that
$$
\sigma_n=n^{-a}, \quad  W_N\sim N^{1-a}
$$
for some $a\in (0,1)$.
We are  going to use parameters $M$ and $R$ that satisfy
$$
 \quad M=[N^{b}], \quad R=[N^{c}]
$$
for some $b,c\in (0,1)$ at our disposal. We impose more restrictions on $a,b,c$ as we move on.

\subsubsection{Eliminating the dependence on the transformations and the functions}
To simplify our notation, in this subsection, when we write $\sum_{n=1}^{N^\alpha}$ we mean $\sum_{n=1}^{[N^\alpha]}$.
%%Also, we assume, as we may, that the functions $f$ and $g$ are bounded by $1$.

Using Lemma~\ref{L:VDC2} in the Appendix with $M=[N^b]$ and $v_n=Y_n\cdot  T^{X_1+\cdots+X_n}f\cdot S^ng$, we get that
\begin{equation}\label{E:A_N}
A_N\mathrel{\mathop:}=\norm{N^{-1+a}\sum_{n=1}^N Y_n\cdot  T^{X_1+\cdots+X_n}f\cdot S^ng}^2_{L^2(\mu)}\ll
A_{1,N}+A_{2,N},
\end{equation}
where
$$
A_{1,N}\mathrel{\mathop:}=N^{-1+2a-b}\cdot \sum_{n=1}^N\norm{ Y_n\cdot  T^{X_1+\cdots+X_n}f\cdot S^ng}^2_{L^2(\mu)}
$$
and
\begin{multline*}
A_{2,N}\mathrel{\mathop:}=N^{-1+2a-b} \cdot \sum_{m=1}^{N^b}  \Big| \sum_{n=1}^{N-m}\int
Y_{n+m}\cdot Y_n\cdot  T^{X_1+\cdots+X_{n+m}}f\cdot S^{n+m}g\cdot    T^{X_1+\cdots+X_n}\bar{f}
\cdot S^n\bar{g}
\ d\mu
\Big|.
\end{multline*}

 We estimate $A_{1,N}$. Since $\E(Y_n^2)=\sigma_n-\sigma_n^2\sim n^{-a}$, Lemma~\ref{L:estimate1} in the Appendix gives for every
 $a\in (0,1)$ that
 $\sum_{n=1}^N Y_n^2\sim \sum_{n=1}^N \E(Y_N^2)\sim N^{1-a}$. Therefore, almost surely we have
%%\begin{equation}\label{E:Est1}
$$
A_{1,N}\ll N^{-1+2a-b} \sum_{n=1}^NY_n^2 \ll_\omega
 N^{-1+2a-b}\cdot N^{1-a}=N^{a-b}.
$$
%%\end{equation}
It follows that $A_{1,N}$ is  bounded by a negative power of $N$ as long as
%%\begin{equation}\label{E:Condition1}
$$
b>a.
$$
%%\end{equation}

Next, we estimate $A_{2,N}$. We compose with $S^{-n}$ and use the  Cauchy-Schwarz  inequality. We get
$$
A_{2,N}\ll N^{-1+2a-b} \cdot \sum_{m=1}^{N^b}  \norm{\sum_{n=1}^{N-m}
Y_{n+m}\cdot Y_n\cdot  S^{-n}T^{X_1+\cdots+X_{n+m}}f\cdot
S^{-n}T^{X_1+\cdots+X_n} \bar{f}}_{L^2(\mu)}.
$$
Furthermore, since
$$
N^{-1+2a-b} \sum_{m=1}^{N^b}m \ll N^{-1+2a+b},
$$
%%\end{equation}
we get the estimate
%%$$
%%A_2\ll N^{2a-1-b} \cdot \sum_{m=1}^{N^b}  \norm{\sum_{n=1}^{N}
%%Y_{n+m}\cdot Y_n\cdot  f(T^{X_1+\cdots+X_{n+m}}S^{-n}x)\cdot   %%\bar{f}(T^{X_1+\cdots+X_n}S^{-n}x)}_{L^2(\mu)}.
%%$$
$$
A_{2,N}\ll N^{-d_1}+ N^{-1+2a-b} \cdot \sum_{m=1}^{N^b}  \norm{\sum_{n=1}^{N}
Y_{n+m}\cdot Y_n\cdot  S^{-n}T^{X_1+\cdots+X_{n+m}}f\cdot
S^{-n}T^{X_1+\cdots+X_n}\bar{f}}_{L^2(\mu)}
$$
where $d_1\mathrel{\mathop:}=1-2a-b$ is positive
 as long as
%%\begin{equation}\label{E:Condition2}
$$
2a+b<1.
$$ Using the  Cauchy-Schwarz  inequality we get
$$
A_{2,N}^2\ll N^{-2d_1}+N^{-2+4a-b} \cdot \sum_{m=1}^{N^b}  \norm{\sum_{n=1}^{N}
Y_{n+m}\cdot Y_n\cdot  S^{-n}T^{X_1+\cdots+X_{n+m}}f\cdot   S^{-n}T^{X_1+\cdots+X_n}\bar{f}}^2_{L^2(\mu)}.
$$
Next  we use Lemma~\ref{L:VDC2} in the Appendix with $R=[N^c]$ and the obvious choice of functions $v_n$,  in order to estimate the square of the $L^2$ norm above.  We
get the estimate
%%\begin{equation}\label{E:Est2}
$$
A_{2,N}^2\ll N^{-2d_1}+ A_{3,N}+A_{4,N},
$$
%%\end{equation}
where $A_{3,N}$, $A_{4,N}$, can be computed as before.
  Using Lemma~\ref{L:estimate2} in the Appendix, and the estimate $\E(Y_n^2)\sim n^{-a}$,  we deduce that  almost surely,
  for every $a\in (0,1/6)$ we have
%% (EXPLAIN THIS or maybe a in place of $2a$ works fine so just keep one of the two)
%%\begin{equation}\label{E:Est3}
$$
A_{3,N}\ll N^{-1+4a-b-c}\sum_{m=1}^{N^b}  \sum_{n=1}^{N}Y^2_{n+m}Y^2_n\ll_\omega
N^{2a-c}=N^{-d_2}
%%\footnote{In fact the proof of this estimate goes as follows we get
%%$\E_\omega A_{3,N}\ll N^{d_2}$ for $d_2<0$ and}
$$
%%\end{equation}
where  $d_2>0$ as long as
%%\begin{equation}\label{E:Condition3}
$$
2a<c.
$$
%%\end{equation}
%%(I wasted some extra cancelation in the previous estimate, it suffices to have $2a<c$ but it doesn't really %%matter
%%since as we'll soon see we have to take $3a<c$ anyways.)
Composing with $T^{-(X_1+\cdots+X_n)}S^{n}$, using that $T$ and $S$ commute,  and  the Cauchy-Schwarz  inequality, we see that
\begin{multline*}
A_{4,N}\ll N^{-1+4a-b-c} \cdot \sum_{m=1}^{N^b} \sum_{r=1}^{N^c}  \Big\|\sum_{n=1}^{N-r}
Y_{n+m+r}\cdot Y_{n+r} \cdot  Y_{n+m}\cdot Y_n\cdot \\
T^{X_{n+1}+\cdots+X_{n+m+r}}S^{-r}f\cdot   T^{X_{n+1}+\cdots+X_{n+r}}S^{-r}\bar{f}
\cdot  T^{X_{n+1}+\cdots+X_{n+m}}\bar{f}\Big\|_{L^2(\mu)}.
\end{multline*}
Since for every  $k\in \N$ we have $X_{n+1}+\cdots+X_{n+k}\in \{0,\ldots, k\}$, it follows that
\begin{multline}\label{E:A4}
A_{4,N}\ll A_{5,N}\mathrel{\mathop:}= N^{-1+4a-b-c} \cdot \sum_{m=1}^{N^b} \sum_{r=1}^{N^c}
\sum_{k_1=0}^{m+r}\sum_{k_2=0}^r \sum_{k_3=0}^m \Big|\sum_{n=1}^{N-r}
Y_{n+m+r}\cdot Y_{n+r} \cdot  Y_{n+m}\cdot Y_n\cdot \\
{\bf 1}_{\sum_{i=1}^{m+r}X_{n+i}=k_1}(n)\cdot {\bf
1}_{\sum_{i=1}^rX_{n+i}=k_2}(n)\cdot {\bf 1}_{\sum_{i=1}^m
X_{n+i}=k_3}(n) \Big|.
\end{multline}
Summarizing,  we have just shown that as long as
\begin{equation}\label{E:Conditions1-3}
a<b, \qquad 2a+b<1, \qquad 2a< c, \qquad a\in (0,1/6), \qquad b,c\in (0,1),
\end{equation}
 almost surely the following holds: For every probability space $(X,\X,\mu)$,
commuting measure preserving transformations $T,S\colon X\to X$, and functions $f,g\in L^\infty(\mu)$
with $\norm{f}_{L^\infty(\mu)}\leq 1$ and $\norm{g}_{L^\infty(\mu)}\leq 1$,
we have
\begin{equation}\label{E:Est1-4}
A_N\ll_\omega N^{-d_3} +A_{5,N}
\end{equation}
for some $d_3>0$, where $A_{5,N}$ is defined in \eqref{E:A4}. Notice that  the expression $A_{5,N}$ depends only on the random variables $X_n$. Therefore,    in order to complete the proof of Proposition~\ref{P:main'}, it suffices to show that almost surely $A_{5,N}$ is  summable along exponentially growing sequences of integers.

\subsubsection{Estimating $A_{5,N}$ (End of proof of Proposition~\ref{P:main'}).}
%%To finish the proof it remains to show that there exists some $k\in \N$ such that $A_{5,N^k}$   converges to $0$ %%almost surely.
Assuming that
\begin{equation}\label{E:Condition3'}
b<c,
\end{equation}
we get that
\begin{equation}\label{E:A4'}
\E_\omega(A_{5,N})\leq  N^{-1+4a-b-c}
 \sum_{m=1}^{N^b} \sum_{r=1}^{N^c}
\sum_{k_1=0}^{2N^c}\sum_{k_2=0}^{N^c} \sum_{k_3=0}^{N^b}
\E_\omega\left|\sum_{n=1}^{N-r}
Y_n\cdot Z_{n,m,r,k_1,k_2,k_3}
\right|
\end{equation}
where
$$
 Z_{n,m,r,k_1,k_2,k_3}\mathrel{\mathop:}=Y_{n+m+r}\cdot Y_{n+r} \cdot  Y_{n+m}\cdot
{\bf 1}_{\sum_{k=1}^{m+r}X_{n+k}=k_1}(n)\cdot
{\bf 1}_{\sum_{k=1}^rX_{n+k}=k_2}(n)\cdot {\bf 1}_{\sum_{k=1}^m X_{n+k}=k_3}(n).
$$
Using the Cauchy-Schwarz inequality we get
\begin{equation}\label{E:expectation}
\E_\omega\left|\sum_{n=1}^{N-r}
Y_n\cdot Z_{n,m,r,k_1,k_2,k_3}
\right|\leq \left(\E_\omega\left|\sum_{n=1}^{N-r}
Y_n\cdot Z_{n,m,r,k_1,k_2,k_3}
\right|^2\right)^{1/2}.
\end{equation}
We expand the square in order to compute its expectation. It is equal to
$$
\sum_{1\leq n_1,n_2\leq N-r}\E_\omega
(Y_{n_1}\cdot Z_{n_1,m,r,k_1,k_2,k_3}\cdot
Y_{n_2}\cdot Z_{n_2,m,r,k_1,k_2,k_3}).
$$
Notice that
   if $n_1<n_2$, then for every $m,r\in \N$, and non-negative integers $k_1,k_2,k_3$, the random variable $Y_{n_1}$ is independent of the variables  $Y_{n_2}$,  $Z_{n_1,m,r,k_1,k_2,k_3}$, and  $Z_{n_2,m,r,k_1,k_2,k_3}$.
   Since $Y_n$ has zero mean, it follows that if $n_1\neq n_2$, then
$$
\E_\omega
(Y_{n_1}\cdot Z_{n_1,m,r,k_1,k_2,k_3}\cdot
Y_{n_2}\cdot Z_{n_2,m,r,k_1,k_2,k_3})=0.
$$
Therefore, the right hand side of equation \eqref{E:expectation} is
equal to
$$
\left(\sum_{n=1}^{N-r}
\E_\omega(Y_n^2) \cdot  \E_\omega( Z_{n,m,r,k_1,k_2,k_3}^2)
\right)^{1/2}\leq
\left(\sum_{n=1}^{N-r}
\E_\omega(Y_n^2) \cdot  \E_\omega( Y_{n+m}^2\cdot Y_{n+r}^2\cdot Y_{n+m+r}^2)
\right)^{1/2}.
$$
If $r,m, n$ are fixed and $r\neq m$, then the variables $Y_{n+m}^2, Y_{n+r}^2,Y_{n+m+r}^2$ are independent, and as a consequence  the right hand side  is almost surely bounded  by
$$
 \left(\sum_{n=1}^{N}
\sigma_n^4
\right)^{1/2}\ll N^{1/2-2a}.
$$
On the other hand, if $r,m, n$ are fixed and $r=m$, then the random variables $Y_{n+r}^4, Y_{n+2r}^2$ are independent, and as a consequence
the right hand side is almost surely bounded  by
$$
 \left(\sum_{n=1}^{N}
\sigma_n^3
\right)^{1/2}\ll N^{1/2-3a/2}.
$$
Combining these two estimates with \eqref{E:A4'}, we deduce that
$$
\E_\omega(A_{5,N})\ll N^{-1+4a-b-c}(N^{1/2-2a+ 2b+3c}+N^{1/2-3a/2+2b+2c})
  =N^{-1/2+2a+b+2c}+N^{-1/2+5a/2+b+c}.
$$
For fixed $\varepsilon>0$,
letting   $a\in (0,1/6)$,  $b$ be greater and very close to $a$, and $c$ be greater  and very close to
$2a$, we get that the conditions \eqref{E:Conditions1-3} and   \eqref{E:Condition3'}
are satisfied, and
\begin{equation}\label{E:A4''}
\E_\omega(A_{5,N})\ll  N^{(-1+14a)/2+\varepsilon}+N^{(-1+11a)/2+\varepsilon}=N^{-d_4}
\end{equation}
for some $d_4$ that satisfies
\begin{equation}\label{E:d_4}
 d_4> (1-14a)/2-\varepsilon.
 \end{equation}
Therefore, for every $a\in (0,1/14)$, if $\varepsilon$ is small enough,  then the estimates \eqref{E:Est1-4} and \eqref{E:A4''} hold for some  $d_3, d_4>0$.

%%It remains to get appropriate
 %%almost sure estimates   for $A_{5,N}$.
 Equation \eqref{E:A4''}  gives that  for every $\gamma>1$ we have
$$
 \sum_{N=1}^\infty \E_\omega(A_{5,{[\gamma^N]}}) <+\infty.
$$
As a consequence, for every $\gamma>1$ we have almost surely that
\begin{equation}\label{E:A5}
 \sum_{N=1}^\infty A_{5,{[\gamma^N]}}(\omega)  <+\infty.
\end{equation}
Recalling the definition of $A_N$ in \eqref{E:A_N}, and combining \eqref{E:Est1-4} and \eqref{E:A5}, we get that for every $a\in (0,1/14)$ and $\gamma>1$,
%%Combining \eqref{E:A_N},  \eqref{E:Est1}, \eqref{E:Est2}, \eqref{E:Est3}, \eqref{E:A4}, \eqref{E:A5}, we %%get
 almost surely the following holds: For every probability space $(X,\X,\mu)$,
commuting measure preserving transformations $T,S\colon X\to X$, and $f,g\in L^\infty(\mu)$, we have
 $$
\sum_{N=1}^\infty \norm{ S_{[\gamma^N]}(\omega,\cdot)
}^2_{L^2(\mu)}<+\infty
 $$
 where
$$
S_N(\omega,\cdot)\mathrel{\mathop:}=N^{-1+a}\sum_{n=1}^N Y_n(\omega)\cdot  T^{X_1(\omega)+\cdots+X_n(\omega)}f\cdot S^ng.
$$
This finishes the proof of Proposition~\ref{P:main'}.

\section{Convergence for the  same random iterates}
In this section we prove   Theorem~\ref{T:mainAP}. Throughout, we use the notation introduced in Section~\ref{SS:setup} and the beginning of Section~\ref{SS:Reductions}.
\subsection{Strategy of the proof}
 In order to prove  Theorem~\ref{T:mainAP} we go through the following  successive
   comparisons:
\begin{align*}
\frac{1}{N}\sum_{n=1}^N f(T^{a_n(\omega)}x) \cdot g(S^{a_n(\omega)}x)
&\approx
\frac{1}{W_N} \sum_{n=1}^N X_n(\omega) \cdot  f(T^nx)\cdot g(S^nx)\\
&\approx
\frac{1}{W_N} \sum_{n=1}^N \sigma_n \cdot  f(T^nx)\cdot g(S^nx)
\\
&\approx
\frac{1}{N} \sum_{n=1}^N  f(T^nx)\cdot g(S^nx),
\end{align*}
where our notation was explained in Section~\ref{SS:strategy1}.
The key comparison is the second. One needs to get good estimates for the $L^2$ norm of the averages
$\frac{1}{W_N} \sum_{n=1}^N Y_n(\omega) \cdot  T^{n}f\cdot S^ng$, where $Y_n\mathrel{\mathop:}=X_n-\sigma_n$.
We do this in two steps. First we use van der Corput's estimate  and Herglotz's theorem   to get a bound that depends only on the
random variables $Y_n$.
The resulting expressions turn out to be random trigonometric polynomials that can be  estimated  using classical techniques.\footnote{A faster way to get such an estimate is to apply van der Corput's Lemma twice.
 The drawback of this method is that
 the resulting expression converges to zero only when $\sigma_n=n^{-a}$ for some $a\in (0,1/4)$.}

\subsection{A reduction}
%%We use the notation introduced in Section~\ref{SS:Reductions}.
%%Our main
%%objective is to  compare the random averages
%%$\frac{1}{N}\sum_{n=1}^N T^{a_n}f\cdot S^{a_n}g$ with the deterministic averages
%%$\frac{1}{N}\sum_{n=1}^N T^{n}f\cdot S^{n}g$. The latter are    known to converge in $L^2$ (\cite{CL84})
%%whenever $T$ and $S$ commute,
%%and pointwise when $T$ and $S$ are powers of the same transformation (\cite{Bou90}).
Arguing as in Section~\ref{SS:Reductions} (in fact the argument is much simpler in the current  case) we reduce  Theorem~\ref{T:mainAP} to proving the following result:
\begin{proposition}\label{P:mainAP}
Suppose that  $\sigma_n=n^{-a}$ for some $a\in (0,1/2)$ and let $\gamma>1$ be a real number.
Then  almost surely the  following holds: For every probability space $(X,\X,\mu)$,
commuting measure preserving transformations $T,S\colon X\to X$, and functions $f,g\in L^\infty(\mu)$, we have
  \begin{equation}\label{E:formulaAP'}
\sum_{k=1}^\infty  \norm{\frac{1}{W_{[\gamma^k]}}
\sum_{n=1}^{[\gamma^k]}  Y_n(\omega) \cdot  T^nf\cdot
S^ng}^2_{L^2(\mu)}<+\infty
 \end{equation}
 where $W_N\mathrel{\mathop:}=\sum_{n=1}^N\sigma_n$.
\end{proposition}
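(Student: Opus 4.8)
The plan is to carry out the two--stage reduction sketched above. First, we may assume $\norm{f}_{L^\infty(\mu)}\le 1$ and $\norm{g}_{L^\infty(\mu)}\le 1$, and (passing to the natural extension of the $\N^2$--action generated by $T$ and $S$, which leaves the norms in \eqref{E:formulaAP'} unchanged) that $T$ and $S$ are invertible. Writing $v_n\mathrel{\mathop:}=Y_n(\omega)\cdot T^nf\cdot S^ng\in L^2(\mu)$ and $A_N(\omega)\mathrel{\mathop:}=\norm{\frac1{W_N}\sum_{n=1}^N v_n}^2_{L^2(\mu)}$, I would expand the square (a trivial instance of van der Corput's inequality, cf.\ Lemma~\ref{L:VDC2}) to get
\begin{equation*}
A_N(\omega)\le\frac1{W_N^2}\sum_{n=1}^N\norm{v_n}^2_{L^2(\mu)}+\frac{2}{W_N^2}\sum_{m=1}^{N-1}\Big|\sum_{n=1}^{N-m}Y_{n+m}(\omega)\cdot Y_n(\omega)\cdot I_{n,m}\Big|,
\end{equation*}
where $I_{n,m}\mathrel{\mathop:}=\big\langle T^{n+m}f\cdot S^{n+m}g,\ T^nf\cdot S^ng\big\rangle_{L^2(\mu)}$. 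Since $\norm{v_n}^2_{L^2(\mu)}\le Y_n(\omega)^2$, the first sum depends only on $\omega$; the whole point is to bound the second sum by something that likewise depends only on $\omega$.

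This is where Herglotz's theorem enters. Setting $F_m\mathrel{\mathop:}=(T^mf)\cdot\bar f$ and $G_m\mathrel{\mathop:}=(S^mg)\cdot\bar g$ (so $\norm{F_m}_{L^2(\mu)}\le1$ and $\norm{G_m}_{L^2(\mu)}\le1$), and using that $T$ and $S$ commute and are invertible, one has
\begin{equation*}
I_{n,m}=\int F_m(T^nx)\cdot G_m(S^nx)\,d\mu=\big\langle (S^{-1}T)^nF_m,\ \overline{G_m}\big\rangle_{L^2(\mu)}.
\end{equation*}
Since the Koopman operator of $S^{-1}T$ is unitary on $L^2(\mu)$, the spectral theorem furnishes a complex Borel measure $\nu_m$ on $\T$ with $\norm{\nu_m}\le1$ and $I_{n,m}=\int_\T e(nt)\,d\nu_m(t)$ for every $n$, whence
\begin{equation*}
\Big|\sum_{n=1}^{N-m}Y_{n+m}Y_n\cdot I_{n,m}\Big|\le\sup_{t\in\T}\Big|\sum_{n=1}^{N-m}Y_{n+m}(\omega)\cdot Y_n(\omega)\cdot e(nt)\Big|=:P_{m,N}(\omega),
\end{equation*}
an expression free of the system and the functions. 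Thus $A_N(\omega)\le\Theta_N(\omega)$, where $\Theta_N(\omega)\mathrel{\mathop:}=W_N^{-2}\sum_{n=1}^N Y_n(\omega)^2+2W_N^{-2}\sum_{m=1}^{N-1}P_{m,N}(\omega)$ depends only on $\omega$; so it suffices to show that $\sum_{k}\Theta_{[\gamma^k]}(\omega)<\infty$ almost surely, with an exceptional set depending only on the $X_n$'s, as required.

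For this last claim it is enough to check that $\sum_k\E_\omega\big(\Theta_{[\gamma^k]}\big)<\infty$. The first term of $\Theta_N$ contributes $\ll N^{a-1}$ to $\E_\omega(\Theta_N)$, because $\E_\omega(Y_n^2)=\sigma_n(1-\sigma_n)\le n^{-a}$ and $W_N\sim N^{1-a}$. For the random trigonometric polynomials $P_{m,N}$ I would use the independence and vanishing means of the $Y_n$: the key arithmetic fact is that for $m\ge1$ and $n_1\ne n_2$ the product $Y_{n_1+m}Y_{n_1}Y_{n_2+m}Y_{n_2}$ contains some index with multiplicity one, hence has zero expectation, so that for every $t\in\T$
\begin{equation*}
\E_\omega\Big|\sum_{n=1}^{N-m}Y_{n+m}Y_n\, e(nt)\Big|^2=\sum_{n=1}^{N-m}\E_\omega\big(Y_{n+m}^2Y_n^2\big)\le\sum_{n=1}^N\sigma_n^2\ll N^{1-2a},
\end{equation*}
using $a<1/2$. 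The analogous computation for the $2p$-th power (a routine count of the pair-partitions of the index multiset, again exploiting independence and $\E_\omega(Y_k)=0$) gives $\E_\omega|\sum_{n=1}^{N-m}Y_{n+m}Y_ne(nt)|^{2p}\ll_p N^{p(1-2a)}$, uniformly in $t\in\T$ and $1\le m\le N$; combined with Bernstein's inequality (reducing $\sup_{t\in\T}$ of a trigonometric polynomial of degree $\le N$ to its maximum over $O(N)$ equispaced points) and a union bound, this yields $\E_\omega(P_{m,N})\ll_\varepsilon N^{(1-2a)/2+\varepsilon}$ for every $\varepsilon>0$, uniformly in $m$. Hence $\E_\omega(\Theta_N)\ll_\varepsilon N^{a-1}+W_N^{-2}\cdot N\cdot N^{(1-2a)/2+\varepsilon}\sim N^{a-1}+N^{a-1/2+\varepsilon}$, which for $a\in(0,1/2)$ and $\varepsilon<\tfrac12-a$ is $\ll N^{-\delta}$ for some $\delta>0$; summing over $N=[\gamma^k]$ gives $\sum_k\E_\omega(\Theta_{[\gamma^k]})<\infty$, so $\sum_k\Theta_{[\gamma^k]}(\omega)<\infty$ almost surely, and then $\sum_kA_{[\gamma^k]}(\omega)\le\sum_k\Theta_{[\gamma^k]}(\omega)<\infty$ for all systems and functions, which is Proposition~\ref{P:mainAP}. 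I expect the one genuinely nontrivial ingredient to be the uniform--in--$t$ moment estimate for $\sum_n Y_{n+m}Y_ne(nt)$, i.e.\ the ``classical techniques for random trigonometric polynomials'' step; the reductions and the remaining bookkeeping are routine.
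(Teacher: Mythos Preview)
Your approach is essentially the paper's: expand the square (the paper likewise takes $M=N$ in Lemma~\ref{L:VDC2}), use spectral theory to replace the cross terms by random trigonometric polynomials, and then estimate $\sup_t\big|\sum_{n\le N-m}Y_{n+m}Y_n\,e(nt)\big|$ uniformly in $1\le m\le N$. The only cosmetic difference in the reduction is that the paper first applies Cauchy--Schwarz to strip off $g$ and then invokes Herglotz's theorem for the single unitary $R=TS^{-1}$, whereas you go directly through the polarized spectral measure of $(F_m,\overline{G_m})$; these give the same bound.

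The substantive difference is in how the random trigonometric polynomial is handled. The paper proves an \emph{almost sure} estimate $\max_{m}\sup_t|\sum_n Y_{n+m}Y_n e(nt)|\ll_\omega N^{1/2-a}\sqrt{\log N}$ via exponential (Chernoff-type) moments together with a Kahane interval argument; you instead bound the \emph{expectation} through $2p$-th moments and a grid on $\T$, then sum over the lacunary sequence. Both routes yield the full range $a\in(0,1/2)$. One caution about your moment step: the phrase ``routine count of pair-partitions'' hides the fact that the variables $Y_{n+m}Y_n$ are \emph{not} independent across $n$ for fixed $m$ (e.g.\ $Y_{n+m}Y_n$ and $Y_{n+2m}Y_{n+m}$ share the factor $Y_{n+m}$), so the combinatorics is not the standard independent-sum moment expansion. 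The paper deals with exactly this point by partitioning $\{1,\dots,N\}$ into two classes $\Lambda_{1,m},\Lambda_{2,m}$ (alternating blocks of length $m$) on each of which the $Y_{n+m}Y_n$ \emph{are} mutually independent; once you insert that splitting, Rosenthal's inequality (or the direct expansion you sketch) immediately gives $\E\big|\sum_n Y_{n+m}Y_ne(nt)\big|^{2p}\ll_p N^{p(1-2a)}$, and the rest of your argument goes through verbatim.
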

We prove this result in the next subsection.

\subsection{Proof of Proposition~\ref{P:mainAP}.} As was the case with the proof of
Proposition~\ref{P:main'} the proof of Proposition~\ref{P:mainAP} splits in two parts.
%%Our strategy is similar as the one used to prove Proposition~\ref{P:main'}. We first estimate the $L^2$ norms  in \eqref{E:formulaAP'} by an %%expression that depends only on the random variables $(X_n)_{n\in \N}$. Using the  van der Corput Lemma (Lemma~\ref{L:VDC2}) twice produces such an %%estimate, that is good enough when $a\in (0,1/4)$.
%%We choose to take a somewhat more cumbersome route in order
%%to improve the range of eligible parameters to $(0,1/2)$. We apply the van der Corput Lemma
%%only once, and then use Herglotz's theorem to bound the resulting expression by
%%some random trigonometric polynomials which we then estimate using classical techniques.

\subsubsection{Eliminating the dependence on the transformations and the functions}\label{SS:VdCAP}
We assume that both functions $f,g$ are bounded by $1$.
We start by using Lemma~\ref{L:VDC2} for $M=N$  and $v_n\mathrel{\mathop:}=Y_n\cdot  T^nf\cdot S^ng$
(this is essentially the ordinary expansion of the square of the sum). We get that
\begin{equation}\label{E:A_N-AP}
A_N\mathrel{\mathop:}=\norm{N^{-1+a}\sum_{n=1}^N Y_n\cdot  T^nf\cdot S^ng}^2_{L^2(\mu)}\ll
A_{1,N}+A_{2,N}
\end{equation}
where
$$
A_{1,N}\mathrel{\mathop:}=N^{-2+2a}\cdot \sum_{n=1}^N\norm{ Y_n\cdot  T^nf\cdot S^ng}^2_{L^2(\mu)}
$$
and
$$
A_{2,N}\mathrel{\mathop:}=N^{-2+2a} \cdot \sum_{m=1}^{N}  \left| \sum_{n=1}^{N-m}\int
Y_{n+m}\cdot Y_n\cdot  T^{n+m}f\cdot S^{n+m}g\cdot   T^n\bar{f} \cdot S^n\bar{g}
\ d\mu \right|.
$$

 We estimate $A_{1,N}$. Since $\E_\omega(Y_n^2)\sim n^{-a}$, Lemma~\ref{L:estimate1} gives
 $\sum_{n=1}^N Y_n^2\ll_\omega N^{1-a}$. It follows that almost surely we have
\begin{equation}\label{E:Est1-AP}
A_{1,N}\ll N^{-2+2a} \sum_{n=1}^NY_n^2 \ll_\omega
 N^{-2+2a}\cdot N^{1-a}=N^{a-1}.
\end{equation}
Therefore, $A_{1,N}$ is bounded by a negative power of $N$  for every $a\in (0,1)$.

We estimate $A_{2,N}$. Composing with $S^{-n}$ and using the Cauchy-Schwarz inequality we get
$$
A_{2,N}\ll N^{-2+2a} \cdot \sum_{m=1}^{N}  \norm{\sum_{n=1}^{N-m}
Y_{n+m}\cdot Y_n\cdot  S^{-n}T^{n+m}f\cdot   S^{-n}T^{n}\bar{f}}_{L^2(\mu)}.
$$
Using that $T$ and $S$ commute and  letting  $R=TS^{-1}$ and
$f_m=T^mf\cdot \bar{f}$, we rewrite the previous estimate as
$$
A_{2,N}\ll N^{-2+2a} \cdot \sum_{m=1}^{N}  \norm{\sum_{n=1}^{N-m}
Y_{n+m}\cdot Y_n\cdot  R^nf_m}_{L^2(\mu)}.
$$
Using  Herglotz theorem on positive definite sequences, and the fact
that the functions $f_m$ are bounded by $1$, we get that the
right hand side is bounded by a constant multiple of
$$
A_{3,N}\mathrel{\mathop:}=N^{-1+2a} \cdot \max_{1\leq m\leq N} \max_{t\in [0,1]}\Big|\sum_{n=1}^{N-m}
Y_{n+m}\cdot Y_n\cdot  e(nt)\Big|.
$$

Summarizing, we have shown that
\begin{equation}\label{E:A_{3,N}}
A_N\ll N^{a-1}+A_{3,N}.
\end{equation}
Therefore, in order to prove Proposition~\ref{P:mainAP} it remains to show that almost surely
 $A_{3,N}\ll_\omega N^{-d}$ for some $d>0$.
We do this in the next subsection.

\subsubsection{Estimating $A_{3,N}$ (End of proof of Proposition~\ref{P:mainAP}).}
The goal of this section is to prove the following result:
\begin{proposition}\label{P:UnifEst}
Suppose that  $\sigma_n\sim  n^{-a}$ for some $a\in (0,1/2)$.
 Then almost surely we have
$$
\max_{1
\leq m
\leq N}\max_{t\in [0,1]}\Big|\sum_{n=1}^{N-m}
Y_{n+m}\cdot Y_n\cdot  e(nt)\Big|\ll_\omega \ N^{1/2-a} \sqrt{\log N},
$$
\end{proposition}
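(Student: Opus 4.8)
The plan is to bound the random double sum $\sum_{n=1}^{N-m} Y_{n+m}Y_n\, e(nt)$ uniformly in $m$ and $t$ by a union bound over a suitable net, combined with a moment (or exponential moment) estimate for each fixed $m$ and $t$. Fix first a single pair $(m,t)$ with $1\le m\le N$ and $t\in[0,1]$. The summands $W_n\mathrel{\mathop:}=Y_{n+m}Y_n\, e(nt)$ have mean zero, are bounded by $1$, and — crucially — although the $W_n$ are not independent, they are $m$-dependent (in fact $W_{n_1}$ and $W_{n_2}$ are independent whenever $|n_1-n_2|>m$). So I would split the sum $\sum_{n=1}^{N-m} W_n$ into $2m$ subsums according to the residue of $n$ modulo $2m$; within each residue class the summands are genuinely independent, bounded, mean-zero random variables. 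For each such subsum (of length $\le N/m$) a standard large-deviation estimate — Bernstein's inequality, or a direct even-moment computation as used elsewhere in the paper — gives that its modulus exceeds $C\sqrt{(N/m)\log N}$ with probability at most, say, $N^{-10}$. Summing the $2m\le 2N$ residue classes and using $\sqrt{(N/m)}\le\sqrt N$, we get that for each fixed $(m,t)$,
\begin{equation*}
\P\Big(\Big|\sum_{n=1}^{N-m} Y_{n+m}Y_n\, e(nt)\Big|\gg \sqrt{N\log N}\,\Big)\ll N^{-9}.
\end{equation*}
Here I am being slightly wasteful: I have used $\sigma_n\le 1$ and hence $|Y_n|\le 1$ to get the $\sqrt{N\log N}$ bound, but the claimed bound is the sharper $N^{1/2-a}\sqrt{\log N}$. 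To recover the extra $N^{-a}$ one uses that the variance of $W_n$ is of order $\E(Y_{n+m}^2)\E(Y_n^2)\sim n^{-2a}$ when $m\neq$ the relevant shift, so that the variance proxy in Bernstein's inequality for a residue-class subsum is $\ll \sum_n \sigma_n^2\ll N^{1-2a}$ rather than $N$; this is exactly the mechanism already exploited in the estimates of $A_{1,N}$ and in the computation of $\E_\omega(A_{5,N})$ in the previous section, and it upgrades $\sqrt{N\log N}$ to $N^{1/2-a}\sqrt{\log N}$ in the tail bound above.

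Next I would remove the suprema by a net argument. The parameter $m$ ranges over at most $N$ integers, so no discretization is needed there — just a union bound over $m$. For $t\in[0,1]$, the function $t\mapsto \sum_{n=1}^{N-m} Y_{n+m}Y_n\, e(nt)$ is a trigonometric polynomial of degree $\le N$ with coefficients bounded by $1$, hence Lipschitz in $t$ with constant $\ll N^2$ (uniformly in $\omega$, since $|Y_n|\le 1$). Therefore it suffices to control the sum on a net of $\{t\in[0,1]\}$ of spacing $\sim N^{-3}$, i.e.\ at $\ll N^3$ points: if the sum is $\ll N^{1/2-a}\sqrt{\log N}$ at every net point, then by the Lipschitz bound it is $\ll N^{1/2-a}\sqrt{\log N}+N^2\cdot N^{-3}=N^{1/2-a}\sqrt{\log N}+o(1)$ everywhere. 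A union bound over the $\le N$ values of $m$ and the $\ll N^3$ net points of $t$ costs a factor $\ll N^4$, which is absorbed by the $N^{-9}$ decay, giving
\begin{equation*}
\P\Big(\max_{1\le m\le N}\max_{t\in[0,1]}\Big|\sum_{n=1}^{N-m} Y_{n+m}Y_n\, e(nt)\Big|\gg N^{1/2-a}\sqrt{\log N}\,\Big)\ll N^{-5}.
\end{equation*}
Since $\sum_N N^{-5}<\infty$, the Borel--Cantelli lemma (Lemma~\ref{L:BorCan} in the Appendix) yields the claimed almost sure bound.

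The main obstacle is the $m$-dependence of the summands: one cannot apply a scalar concentration inequality directly to $\sum_n Y_{n+m}Y_n\, e(nt)$. Splitting into residue classes mod $2m$ resolves this cleanly, but one must be careful that the number of classes ($2m$, which can be as large as $2N$) times the per-class failure probability still decays fast enough — this forces the per-class estimate to have a polynomial saving like $N^{-10}$, which is why one wants an exponential (Bernstein-type) tail bound rather than just a second-moment estimate. A secondary technical point is bookkeeping the two cases $m\ne r$ versus $m=r$-type coincidences among the shifts $\{0,m\}$ when computing the variance of $W_n$, exactly as in the $A_{5,N}$ computation; but since here there are only the two shifts $0$ and $m$ (and $m\ge 1$), this is mild and does not affect the exponent.
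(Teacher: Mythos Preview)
Your approach has a genuine gap in the handling of the $m$-dependence. Splitting $\{1,\ldots,N-m\}$ into $2m$ residue classes modulo $2m$ does make the summands within each class independent, but when you ``sum the $2m$ residue classes'' you are using the triangle inequality: if each subsum $S_j$ satisfies $|S_j|\ll N^{1/2-a}\sqrt{\log N}$ with high probability, the total is only controlled by $2m\cdot N^{1/2-a}\sqrt{\log N}$, and since $m$ ranges up to $N$ this is far too large. (Using the sharper per-class variance $\sim N^{1-2a}/m$ gives a per-class bound $\sqrt{(N^{1-2a}/m)\log N}$ and hence a total of $2\sqrt{m}\,N^{1/2-a}\sqrt{\log N}$, still off by $\sqrt{m}$.) You cannot recombine the $S_j$ by a further concentration inequality either, because they are not independent of one another: $S_j$ and $S_{j+m\bmod 2m}$ share $Y$-variables.

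The paper sidesteps this loss with a different decomposition: instead of $2m$ residue classes it uses just \emph{two} pieces, the alternating block sets
\[
\Lambda_{1,m}=\bigcup_{k\ge 0}(2km,(2k+1)m],\qquad \Lambda_{2,m}=\bigcup_{k\ge 0}((2k+1)m,(2k+2)m].
\]
For $n$ ranging over either $\Lambda_{i,m}$, the index pairs $\{n,n+m\}$ are pairwise disjoint (check separately the cases where $n,n'$ lie in the same length-$m$ block and in different blocks), so the products $Y_{n+m}Y_n$ are jointly independent over all $n\in\Lambda_{i,m}$. With only two pieces the triangle-inequality loss is a harmless factor of $2$. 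The remainder of your plan --- an exponential-moment bound handling both the supremum over $t$ and the union over $m$, followed by Borel--Cantelli --- is essentially what the paper does (packaged as Lemma~\ref{L:BasicAP}); once the two-block decomposition replaces your residue-class split, the argument goes through.
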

Notice that by combining this estimate with \eqref{E:A_{3,N}}
we get a proof of Proposition~\ref{P:mainAP}, and as a consequence a proof of Theorem~\ref{T:mainAP}.

The key ingredient in the proof of Proposition~\ref{P:UnifEst} is the following lemma. It is a
strengthening  of an estimate of Bourgain \cite{Bou88} regarding random trigonometric polynomials. The proof of the lemma is a variation on the classical Chernoff's inequality (see e.g. Theorem 1.8 in \cite{Tao-Vu}), combined with  an elementary estimate on the uniform norm of a trigonometric polynomial.
We were motivated to use this argument, over the  one given  in
\cite{Bou88}, after reading a paper of Fan and Schneider (in particular, the proof of Theorem~6.4 in \cite{FS10}).

\begin{lemma}\label{L:BasicAP}
Let $(Z_{m,n})_{m,n\in\N}$ be a family of random variables, uniformly bounded by $1$, and with mean zero. Suppose that, for each fixed $m$, the random variables $Z_{m,n}, n\geq1,$ are independent. Let $(\rho_n)$ be a sequence of positive numbers such that $$\sup_{m\in \N}\big({\mathrm {Var}}(Z_{m,n})\big)\leq\rho_n\quad\text{and}\quad\lim_{N\to\infty}\frac1{\log N}\sum_{n=1}^N\rho_n=+\infty.
$$
Then, almost surely, we have
$$
\max_{1\leq m\leq N}\max_{t\in[0,1]}\left|\sum_{n=1}^N Z_{m,n} \cdot e(nt)\right|\ll_\omega \left(\sqrt{\log N\cdot\sum_{n=1}^N\rho_n}\right).
$$

\end{lemma}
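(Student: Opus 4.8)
The plan is to reduce the supremum over the continuous parameter $t\in[0,1]$ to a supremum over a finite net, control the discretization error by a crude derivative bound on the trigonometric polynomial, and then apply a Chernoff/Hoeffding-type tail bound to each point of the net together with a union bound; finally a Borel--Cantelli argument along $N$ turns the tail estimates into an almost sure statement. Throughout, fix a realization of the variables and write $P_{m,N}(t)\mathrel{\mathop:}=\sum_{n=1}^N Z_{m,n}\, e(nt)$.

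First I would handle the discretization. Since $|P_{m,N}'(t)|\leq 2\pi\sum_{n=1}^N n|Z_{m,n}|\leq 2\pi N^2$, choosing a net $\mathcal{N}_N\subset[0,1]$ of $O(N^2/\eta)$ equally spaced points with spacing $\eta\mathrel{\mathop:}=N^{-2}$ guarantees that $\max_{t\in[0,1]}|P_{m,N}(t)|\leq \max_{t\in\mathcal{N}_N}|P_{m,N}(t)|+O(1)$, uniformly in $m\leq N$; the cardinality of $\mathcal{N}_N$ is then $O(N^4)$, which is only polynomial in $N$ and hence costs only a logarithmic factor after the union bound. Next, for each fixed pair $(m,t)$ I would apply the scalar Chernoff/Hoeffding inequality (Theorem~1.8 in \cite{Tao-Vu}) to the independent, mean-zero, bounded summands: splitting $P_{m,N}(t)$ into real and imaginary parts, each is a sum of independent real random variables bounded by $1$ with total variance at most $\sum_{n=1}^N\rho_n\mathrel{\mathop:}=V_N$, so for any $\lambda>0$,
\[
\P\bigl(|P_{m,N}(t)|\geq \lambda\sqrt{V_N}\bigr)\ll \exp(-c\lambda^2)
\]
as long as $\lambda\ll\sqrt{V_N}$ (the sub-Gaussian range of Bernstein's inequality), which is exactly where the hypothesis $V_N/\log N\to\infty$ enters: it ensures the relevant deviation $\lambda\asymp\sqrt{\log N}$ lies in that range.

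Combining the two steps, a union bound over $m\leq N$ and over $t\in\mathcal{N}_N$ gives, for a suitable absolute constant $C$,
\[
\P\Bigl(\max_{1\leq m\leq N}\max_{t\in[0,1]}|P_{m,N}(t)|\geq C\sqrt{V_N\log N}\Bigr)\ll N\cdot N^4\cdot \exp(-c\cdot C^2\log N)=N^{5-cC^2},
\]
which is summable in $N$ once $C$ is chosen large enough. Borel--Cantelli then yields that almost surely $\max_{1\leq m\leq N}\max_{t\in[0,1]}|P_{m,N}(t)|\ll_\omega \sqrt{V_N\log N}$ for all large $N$, which is the claimed bound. The main obstacle, and the place needing the most care, is step two: getting a tail bound with the right constant in the exponent that is simultaneously valid for \emph{all} $m$ with one realization of the randomness — this is what forces the quantitative form of Chernoff's inequality (rather than just a variance bound) and what makes the hypothesis $\frac{1}{\log N}\sum_{n\leq N}\rho_n\to\infty$ indispensable; one also has to be slightly careful that the bound $\rho_n$ on $\mathrm{Var}(Z_{m,n})$ is uniform in $m$, so that the same variance proxy $V_N$ works across all the trigonometric polynomials being estimated at once.
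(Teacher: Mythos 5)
Your proof is correct and reaches the same conclusion, but it handles the continuous supremum over $t$ by a genuinely different mechanism than the paper's proof, and it is worth noting the difference. The paper stays entirely at the level of exponential moments: it first quotes a result of Kahane that every degree-$N$ trigonometric polynomial exceeds half its maximum on a (random) interval of length $\geq N^{-2}$, and uses this to bound $\E_\omega\bigl(e^{\lambda M_{m,N}/2}\bigr) \ll N^2 \int_{[0,1]} \E_\omega\bigl(e^{\lambda P_{m,N}(t)} + e^{-\lambda P_{m,N}(t)}\bigr)\,dt$. This yields a bound on $\E_\omega(e^{\lambda M_N/2})$ directly, and Markov's inequality at the very end produces the tail estimate and Borel--Cantelli. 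You instead discretize $[0,1]$ by an equally spaced net controlled through the crude derivative bound $|P_{m,N}'|\leq 2\pi N^2$, then apply a pointwise Chernoff--Bernstein tail bound at each net point and take a union bound over $m$ and the net. Both methods pay a polynomial-in-$N$ factor for taming the supremum over $t$ (the paper pays $N^2$, you pay $N^2$ --- your stated cardinality $O(N^4)$ is an arithmetic slip, since a net of spacing $N^{-2}$ in $[0,1]$ has $O(N^2)$ points, but this is harmless), and both absorb it in the $\sqrt{\log N}$. Your approach is more self-contained (no appeal to Kahane), at the cost of doing the union bound explicitly; the paper's approach is slightly slicker in that it never needs to leave MGF-land. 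You also correctly identify the two points requiring care: that the hypothesis $\frac1{\log N}\sum_{n\leq N}\rho_n\to\infty$ is exactly what keeps the deviation $\lambda\asymp\sqrt{\log N}$ in the sub-Gaussian regime $\lambda\ll\sqrt{V_N}$, and that the uniformity of $\rho_n$ in $m$ is what lets a single variance proxy $V_N$ serve all $N$ polynomials at once.
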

\begin{proof}
It suffices to get the announced estimate for
$$
M_N\mathrel{\mathop:}=\max_{1\leq m\leq N}\max_{t\in [0,1]} |P_{m,N}(t)|
$$
where
$$
P_{m,N}(t)\mathrel{\mathop:}=\sum_{n=1}^N
Z_{m,n}\cdot \cos(2\pi nt).
$$
In a similar way we get an estimate with $\sin(2\pi nt)$ in place of $\cos(2\pi nt)$.

Since $|Z_{m,n}|\leq1$ and $\E_\omega(Z_{m,n})=0$, we have $\E_\omega\left(e^{\lambda Z_{m,n}}\right)\leq e^{\lambda^2\text{Var}(Z_{m,n})}$ for all $\lambda\in[-1,1]$.
(See Lemma 1.7 in \cite{Tao-Vu}.)
%%\\Let
%%$
%%R_N\mathrel{\mathop:}=\sum_{n=1}^N\sigma_n.
%%$
%%Then  $\sum_{n=1}^N\text{Var}(Z_{m,n})\leq R_n$.

For every $m\in \N$,  $\lambda \in [-1,1]$,  and $t\in[0,1]$, we get  that
\begin{equation}\label{E:11}
\E_\omega\left(e^{\lambda P_{m,N}(t)}\right)=
\prod_{n=1}^N\E_\omega\left(e^{\lambda Z_{m,n} \cos(2\pi nt)}\right)\leq
\prod_{n=1}^N e^{(\lambda\cos(2\pi nt))^2\text{Var}(Z_{m,n})}
\leq  e^{ \lambda^2 R_N}
\end{equation}
where $$
R_N\mathrel{\mathop:}=\sum_{n=1}^N\sigma_n.
$$
Next
notice that for $\lambda\in [0,1]$ we have
\begin{equation}\label{E:22}
\E_\omega(e^{\lambda M_N})=\E_\omega\big( \max_{1\leq m\leq N} e^{\lambda \max_t |P_{m,N}(t)|} \big)\leq
\E_\omega\Big( \sum_{m=1}^{N} e^{\lambda \max_t |P_{m,N}(t)|} \Big)\leq
 N \max_{1\leq m\leq N}
\E_\omega(e^{\lambda M_{m,N}})
\end{equation}
where
$$
M_{m,N}\mathrel{\mathop:}=\max_{t\in [0,1]}{|P_{m,N}(t)|}.
$$

It is easy to see (e.g. Proposition 5 in Chapter 5, Section 2 of
\cite{Kah85}) that   there exist  random intervals $I_{m,N}$ of
length $|I_{m,N}|\geq N^{-2}$  such  that $|P_{m,N}(t)|\geq
M_{m,N}/2$ for every $t\in I_{m,N}$. Using this, we get
 that
\begin{multline*}
 \E_\omega(e^{\lambda_N M_{m,N}/2})\ll  N^2 \cdot \E_\omega\Big(\int_{I_{m,N}} (e^{\lambda_N P_{m,N}(t)}+e^{-\lambda_N P_{m,N}(t)}) \ dt\Big)\leq \\
 N^2 \cdot \E_\omega\Big(\int_{[0,1]} (e^{\lambda_N P_{m,N}(t)}+e^{-\lambda_N P_{m,N}(t)}) \ dt\Big)
\end{multline*}
where $\lambda_N \in [0,1]$ are numbers at our disposal. Using
\eqref{E:11} we get that
$$
\E_\omega\Big(\int_{[0,1]} (e^{\lambda_N P_{m,N}(t)}+e^{-\lambda_N
P_{m,N}(t)}) \ dt\Big)= \int_{[0,1]} \E_\omega\big( e^{\lambda_N
P_{m,N}(t)}+e^{-\lambda_N P_{m,N}(t)}\big) \ dt\leq 2 e^{R_N
\lambda_N^2}.
$$
Therefore,
$$
\E_\omega(e^{\lambda_N M_{m,N}/2})\ll N^2 \cdot  e^{R_N \lambda_N^2}.
$$
Combining this estimate  with \eqref{E:22}, we get
$$
\E_\omega(e^{\lambda_N M_{N}/2})\ll N^3 \cdot  e^{R_N \lambda_N^2}.
$$
Therefore, there exists a universal constant $C$ such that
$$
\E_\omega\Big(e^{\lambda_N/2 (M_{N}-2R_N \lambda_N -2\log(C N^5)\lambda_N^{-1})}\Big)\leq \frac{1}{N^2}.
$$
As a consequence,
\begin{equation}\label{E:2}
\P \big( M_N\geq 2R_N \lambda_N+2\log(CN^5)\lambda_N^{-1}\big) \leq \frac{1}{N^2}.
\end{equation}
For $\alpha,\beta$ positive, the function
$f(\lambda)=\alpha\lambda+\beta\lambda^{-1}$ achieves a minimum
$\sqrt{\alpha \beta}$ for $\lambda=\sqrt{\beta/\alpha}$. So letting
$\lambda_N=\sqrt{\log(CN^5)/(AR_N)}$ (by assumption $\lambda_N$  converges
to $0$, so $\lambda_N<1$  for large $N$) in \eqref{E:2} gives
$$
\P \Big( M_N\geq \sqrt{4 R_N\log(CN^5)}\Big) \leq \frac{1}{N^2}.
$$
By the Borel-Cantelli Lemma, we get almost surely that
$$
M_N\ll_\omega \sqrt{R_N\log N}.
$$
 This completes the proof.
\end{proof}
Finally we use Lemma~\ref{L:BasicAP} to prove Proposition~\ref{P:UnifEst}.
\begin{proof}[Proof of Proposition~\ref{P:UnifEst}]
Our goal is to apply Lemma~\ref{L:BasicAP} for the random variables
$Y_{n+m}\cdot Y_n$ where $Y_n=X_n-\sigma_n$. These random variables are bounded by $1$ and   have zero mean.
 We just have  to take some care because they are not independent.
We divide the positive integers  into two classes:
$$
\Lambda_{1,m}\mathrel{\mathop:}=\{n\colon 2km< n\leq (2k+1)m \ \text{ for some non-negative integer } k\}
$$
and
$$
\Lambda_{2,m}\mathrel{\mathop:}=\{n\colon (2k+1)m< n\leq (2k+2)m \ \text{ for    some non-negative integer } k\}.
$$
Then for fixed $m\in \N$,  the random variables $Y_{n+m}\cdot Y_n$, $n\in
\Lambda_{1,m}$, are independent, and the same holds for the random
variables $Y_{n+m}\cdot Y_n$, $n\in \Lambda_{2,m}$.
For $i=1,2$, we  apply Lemma~\ref{L:BasicAP} to the random variables $$Z_{m,n}\mathrel{\mathop:}=Y_{n+m}\cdot Y_n\cdot {\bf1}_{\Lambda_{i,m}\cap[1,N-m]}(n).$$
Notice that  either $\text{Var}(Z_{m,n})=0$, or $$\text{Var}(Z_{m,n})=\sigma_{n+m}\sigma_n-\sigma_{n+m}^2\sigma_n-\sigma_{n+m}\sigma_n^2+\sigma_{n+m}^2\sigma_n^2\leq \sigma_{n+m}\sigma_n\leq\sigma_n^2 \sim n^{-2a}.$$
Since  $\sum_{n=1}^Nn^{-2a} \sim N^{1-2a}$ and $a<1/2$, the assumptions of  Lemma~\ref{L:BasicAP} are satisfied for $\rho_n=n^{-2a}$.
   We deduce that  almost surely we have
 $$
\max_{1
\leq m
\leq N}\max_{t\in [0,1]}\Big|\sum_{n=1}^{N-m}
Y_{n+m}\cdot Y_n\cdot  e(nt)\Big|\ll_\omega  N^{1/2-a}\sqrt{\log N}.
$$
This completes the proof.
\end{proof}

\section{Non-recurrence and non-convergence.}
In this section we prove Theorem~\ref{T:NonRec-NonConv}. The proof is based on the following lemma:
\begin{lemma}\label{L:NonRecConv}
Let $a,b\colon \N\to\Z\setminus\{0\}$ be  injective sequences and $F$ be any subset of $\N$.
Then there exist   a probability space $(X,\X,\mu)$,  measure preserving transformations $T,S\colon X\to X$, both of them Bernoulli, and $A\in \X$, such that
$$
\mu\big(T^{-a(n)}A\cap S^{-b(n)}A\big)=\begin{cases} 0&\ \text{ if }\ n\in F ,\\\frac{1}{4}&\ \text{ if } \ n\notin F .\end{cases}
$$
\end{lemma}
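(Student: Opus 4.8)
The plan is to realize the entire configuration inside a single copy of the Bernoulli shift on two symbols. First I would take $X=\{0,1\}^{\Z}$ with $\mu$ the uniform Bernoulli product measure, let $\tau$ denote the shift $(\tau x)_k=x_{k+1}$, and set $T\mathrel{\mathop:}=\tau$ together with $A\mathrel{\mathop:}=\{x\colon x_0=1\}$; then $\mu(A)=\tfrac12$ and $\mathbf{1}_{T^{-a(n)}A}(x)=x_{a(n)}$. The transformation $S$ will be a conjugate of $\tau$ by a measure-preserving bijection of $X$ that permutes coordinates and flips some of them: such a conjugate is automatically isomorphic to the two-symbol Bernoulli shift, while the conjugation gives complete control over the position of $S^{-b(n)}A$ relative to $T^{-a(n)}A$.

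The combinatorial heart of the argument is to build a bijection $\pi\colon\Z\to\Z$ with $\pi(0)=0$, with $\pi(b(n))=a(n)$ for every $n\in F$, and with $\pi(b(n))\neq a(n)$ for every $n\notin F$. I would first observe that, since $a$ and $b$ are injective and never equal to $0$, the assignment $0\mapsto 0$ and $b(n)\mapsto a(n)$ for $n\in F$ is a well-defined injection from $\{0\}\cup b(F)$ to $\{0\}\cup a(F)$, and that $b(n)\notin\{0\}\cup b(F)$ whenever $n\notin F$. A standard back-and-forth argument then extends this partial injection to a bijection of $\Z$; each time $\pi$ is to be defined on some $b(n)$ with $n\notin F$ one picks an as-yet-unused value other than $a(n)$, which is possible because only finitely many values have been used and $\Z$ is infinite. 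I would also define $\varepsilon\colon\Z\to\{0,1\}$ by $\varepsilon_j=1$ if $j\in b(F)$ and $\varepsilon_j=0$ otherwise; this is well defined because $b$ is injective, and $\varepsilon_0=0$ because $0\notin b(F)$.

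Next I would put $(\Phi x)_j\mathrel{\mathop:}=x_{\pi(j)}\oplus\varepsilon_j$, with addition taken mod $2$. Since $\pi$ is a bijection and coordinatewise addition of a constant preserves the uniform measure on $\{0,1\}$, the map $\Phi$ is a measure-preserving bijection of $(X,\mu)$, with $(\Phi^{-1}y)_i=y_{\pi^{-1}(i)}\oplus\varepsilon_{\pi^{-1}(i)}$; then I set $S\mathrel{\mathop:}=\Phi^{-1}\tau\Phi$, so that $T$ and $S$ are both isomorphic to the Bernoulli shift on two symbols. A direct computation using $\pi(0)=0$ and $\varepsilon_0=0$ gives $S^{-b}A=\Phi^{-1}\tau^{-b}\Phi A=\{x\colon x_{\pi(b)}\oplus\varepsilon_b=1\}$, hence $\mathbf{1}_{S^{-b(n)}A}(x)=x_{\pi(b(n))}\oplus\varepsilon_{b(n)}$. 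Reading off the dichotomy then finishes the proof: if $n\in F$ then $\pi(b(n))=a(n)$ and $\varepsilon_{b(n)}=1$, so $T^{-a(n)}A\cap S^{-b(n)}A=\{x_{a(n)}=1\}\cap\{x_{a(n)}=0\}=\emptyset$ and the measure is $0$; if $n\notin F$ then $\pi(b(n))\neq a(n)$, so $\mathbf{1}_{T^{-a(n)}A}$ and $\mathbf{1}_{S^{-b(n)}A}$ are functions of two distinct coordinates and hence independent, and $\mu(T^{-a(n)}A\cap S^{-b(n)}A)=\tfrac12\cdot\tfrac12=\tfrac14$.

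I expect the main obstacle to be the back-and-forth construction of $\pi$: one must verify carefully that the requirements forced by membership in $F$ are mutually consistent (exactly where injectivity of $a,b$ and the hypothesis that $a,b$ avoid $0$ are used) and that imposing the inequalities $\pi(b(n))\neq a(n)$ for $n\notin F$ does not obstruct surjectivity of $\pi$. Everything else---that $\Phi$ is a measure isomorphism, that conjugation preserves the Bernoulli property, and the identity for $S^{-b}A$---is a short verification.
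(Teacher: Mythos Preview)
Your approach is essentially the paper's: conjugate the Bernoulli shift by a coordinate permutation combined with a bit-flip so that $S^{-b(n)}A$ lands on the cylinder $\{x_{\pi(b(n))}=\ast\}$, and then choose $\pi$ so that this cylinder either coincides with or is independent of $\{x_{a(n)}=1\}$. The paper uses a uniform flip on all nonzero coordinates where you use a selective flip $\varepsilon$ supported on $b(F)$; both variants work and the subsequent computation you give is correct.

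The gap is precisely where you suspected, but not for the reason you name. The forced partial map $\{0\}\cup b(F)\to\{0\}\cup a(F)$ is indeed a well-defined bijection between these two sets, but it need \emph{not} extend to a bijection of $\Z$: this requires $\Z\setminus(\{0\}\cup b(F))$ and $\Z\setminus(\{0\}\cup a(F))$ to have the same cardinality, and nothing in the hypotheses guarantees that. Concretely, take $F=\N$, let $b(n)=n$, and let $a$ be any bijection $\N\to\Z\setminus\{0\}$. Then the forced assignments already send $\{0,1,2,\ldots\}$ onto all of $\Z$, and there is no room left to define $\pi$ injectively on the negative integers. A back-and-forth argument cannot repair this.

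The paper handles exactly this obstruction: it first proves the lemma under the extra assumption that the ranges of $a$ and $b$ each miss infinitely many integers (so both complements above are infinite and back-and-forth succeeds), and then reduces the general case to this one by replacing $a,b$ with $2a,2b$ and $T,S$ with $T^2,S^2$, noting that $T^2,S^2$ are still Bernoulli and that the ranges of $2a,2b$ miss all odd integers. Inserting this reduction at the start of your argument closes the gap.
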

\begin{proof}
We are going to combine a construction of  Berend (Ex $7.1$ in \cite{Bere85}) with  a construction of Furstenberg (page $40$ in \cite{Fu81}).

Suppose first that the range of  both  sequences  misses infinitely many integers.
Let $X=\{0,1\}^\Z$,  $\mu$ be the $(1/2,1/2)$ Bernoulli measure on $X$, and  $T$ be the shift transformation. Given a permutation $\pi$ of $\Z$ with $\pi(0)=0$ we define the measure preserving transformation  $\psi_{\pi}\colon X\to X$ by
$$(\psi_{\pi} x)_n=
\begin{cases}
x_{0}& \ \text{ if } \  n=0,  \\
1-x_{\pi(n)}& \ \text{ if } \   n\neq 0.
\end{cases}$$
 Let $S=\psi_{\pi}^{-1}T\psi_{\pi}$ ($S$ is  also  Bernoulli). Since $(\psi_\pi)^{-1}=\psi_{\pi^{-1}}$ and $(\psi_{\pi^{-1}}x)_0=x_0$, for $n\in \N$ we have
$$
(S^nx)_0=(\psi_{\pi^{-1}}T^n \psi_{\pi} x)_0=(T^n \psi_{\pi}x)_0=(\psi_{\pi} x)_n =
1-x_{\pi(n)}.
$$
Hence, if $A=\{x\in X \colon x(0)=1\}$ we have
$$
T^{-a(n)}A\cap S^{-b(n)}A
=
\{x\in X\colon x_{a(n)}=1, x_{\pi(b(n))}=0\}.
$$
Finally,  we make an appropriate choice for $\pi$.
Since the sequences  $(a(n))$ and $(b(n))$ are injective and miss infinitely many integers, there exists a permutation $\pi$ of the integers   that fixes $0$ and satisfies
  $\pi(b(n))=a(n)$ if $n\in F$, and $\pi(b(n))\neq a(n)$ if $n\notin F$. Then
$$
\mu(T^{-a(n)}A\cap S^{-b(n)}A)=\begin{cases} 0 &\ \text{ if }\  n\in F,\\1/4  &\ \text{ if } \ n\notin F. \end{cases}
$$
%%This proves the stated claim.

 We now consider the general case. Notice that the range of the sequences $(2a(n))$ and $(2b(n))$ misses infinitely many values.
 We consider the transformations $T^2$ and $S^2$ in place of $T$ and $S$ (again they are Bernoulli) and carry out the previous argument with a permutation $\pi$ that satisfies $\pi(2b(n))=2a(n)$ if $n\in F$ and $\pi(2b(n))\neq 2a(n)$ if $n\notin F$.
\end{proof}

\begin{corollary}
Let  $a,b\colon \N\to\Z\setminus\{0\}$ be  injective sequences,
and $c\colon \N\to [0,1/4] $ be any sequence.
Then there exist   a probability space $(X,\X,\mu)$,  measure preserving transformations $T,S\colon X\to X$, and $A\in \X$, such that    for every $n\in \N$ one has
$$
c(n)=\mu\big(T^{-a(n)}A\cap S^{-b(n)}A\big).
$$
\end{corollary}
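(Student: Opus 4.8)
The plan is to realize the desired values as a convex combination of the two extreme cases ($0$ and $1/4$) produced by Lemma~\ref{L:NonRecConv}, with weights dictated by the binary expansions of the $c(n)$. First I would expand each value in binary: for every $n\in\N$ write $4c(n)=\sum_{j=1}^\infty \varepsilon_j(n)\,2^{-j}$ with $\varepsilon_j(n)\in\{0,1\}$ (possible since $4c(n)\in[0,1]$). For each $j\in\N$ set $F_j\mathrel{\mathop:}=\{n\in\N\colon \varepsilon_j(n)=0\}$ and apply Lemma~\ref{L:NonRecConv} with $F=F_j$ to obtain a probability space $(X_j,\X_j,\mu_j)$, Bernoulli transformations $T_j,S_j\colon X_j\to X_j$, and a set $A_j\in\X_j$ with
$$
\mu_j\big(T_j^{-a(n)}A_j\cap S_j^{-b(n)}A_j\big)=\tfrac14\,\mathbf 1_{n\notin F_j}=\tfrac14\,\varepsilon_j(n)
$$
for every $n\in\N$.

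Next I would assemble these systems into a single one by taking a weighted disjoint union. Let $X$ be the disjoint union of the $X_j$, a point being a pair $(j,x)$ with $x\in X_j$, equipped with the $\sigma$-algebra $\X$ of sets $E$ such that $E\cap X_j\in\X_j$ for all $j$, and the probability measure $\mu\mathrel{\mathop:}=\sum_{j=1}^\infty 2^{-j}\mu_j$ (note $\mu(X)=\sum_{j\ge1} 2^{-j}=1$). Define $T,S\colon X\to X$ by letting $T$ (resp.\ $S$) act on the copy $X_j$ as $T_j$ (resp.\ $S_j$); since each $T_j$ and $S_j$ preserves $\mu_j$, both $T$ and $S$ are measurable and preserve $\mu$. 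Finally set $A\mathrel{\mathop:}=\bigcup_{j\in\N}A_j\in\X$.

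The computation is then immediate: $T^{-a(n)}A\cap S^{-b(n)}A$ meets the copy $X_j$ exactly in $T_j^{-a(n)}A_j\cap S_j^{-b(n)}A_j$, so
$$
\mu\big(T^{-a(n)}A\cap S^{-b(n)}A\big)=\sum_{j=1}^\infty 2^{-j}\,\mu_j\big(T_j^{-a(n)}A_j\cap S_j^{-b(n)}A_j\big)=\frac14\sum_{j=1}^\infty 2^{-j}\varepsilon_j(n)=c(n).
$$
There is no real obstacle here; the only points requiring a modicum of care are verifying that the weighted disjoint union is a bona fide probability space and that $T,S$ are measurable and measure preserving on it, both of which are routine. (In contrast to Lemma~\ref{L:NonRecConv}, the resulting transformations $T$ and $S$ are not ergodic, let alone Bernoulli, but the statement of the corollary does not ask for this.)
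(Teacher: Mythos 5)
Your proof is correct, but it takes a genuinely different route from the paper's. The paper realizes $c$ as a barycenter by invoking the Krein--Milman theorem: the set of sequences with values in $[0,1/4]$ is a compact convex subset of a locally convex space whose extreme points are the $\{0,1/4\}$-valued sequences, so $c(n)=\int c_y(n)\,d\sigma(y)$ for some Borel probability measure $\sigma$ on the space of extreme points; it then builds a single skew-product system on $Y\times X$, using the fact that all the systems from Lemma~\ref{L:NonRecConv} can be taken to live on the \emph{same} space $\{0,1\}^{\mathbb Z}$ with the same set $A$ (only the permutation $\pi$ varies with $y$). You instead produce the convex combination explicitly via the binary expansion $4c(n)=\sum_j \varepsilon_j(n)2^{-j}$, reducing to countably many applications of the lemma, and assemble them by a weighted disjoint union rather than a skew product. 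Your approach is more elementary (no Krein--Milman, no need to verify measurability of $y\mapsto T_y$ in the fiber direction) and equally rigorous; what the paper's approach buys is generality and a one-line existence argument, at the cost of a nonconstructive abstract tool. Both constructions sacrifice the Bernoulli property, as you correctly note, and the corollary does not claim it.
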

\begin{proof}
The set $S$ consisting of all sequences that take values on a set $[0,\alpha]$, where $\alpha>0$,  is a compact (with the topology of pointwise convergence)  convex subset of the locally convex space that consists  of all bounded sequences. The extreme points of  $S$  are the  sequences that take values in the set $\{0,\alpha\}$. The set $\text{ext}(S)$, of extreme points of $S$, is closed, hence, by the theorem of Krein-Milman, every element in $S$ is the barycenter of a Borel probability measure on $\text{ext}(S)$.
As a consequence we get  that given any sequence $c\colon\N\to [0,1/4]$,
  there exists a Borel probability measure $\sigma$ on a compact metric space $(Y,d)$,
and sequences  $c_y\colon \N\to \{0,1/4\}$, $y\in Y$,
  such that for every $n\in \N$ one has
$$
c(n)=\int c_y(n) \ d\sigma(y).
$$
 Looking at the proof of Lemma~\ref{L:NonRecConv} we see that there exist  measure preserving transformations $T_y$ and $S_y$,  acting on the same probability space $(X,\X,\mu)$, and $A\in \X$, such that for every $y\in Y$ and $n\in \N$ one has
$$
c_y(n)=\mu\big(T_y^{-a(n)}A\cap S_y^{-b(n)}A\big).
$$
On the space $(Y\times X, \mathcal{B}_Y\times \X, \sigma\times \mu)$ we define the measure preserving transformations $T,S\colon Y\times X\to Y\times X$ by the formula $T(y,x)=(y,T_y(x))$ and $S(y,x)=(y, S_y(x))$. Then for every $n\in \N$ one has
$$
\mu\big(T^{-a(n)}A\cap S^{-b(n)}A\big)=\int \mu\big(T_y^{-a(n)}A\cap S_y^{-b(n)}A\big) \ d\sigma(y)=
\int c_y(n) \ d\sigma(y)=c(n).
$$
\end{proof}

\begin{proof}[Proof of Theorem~\ref{T:NonRec-NonConv}]
For non-convergence take  $F=\bigcup_{n\in \N}[2^{2n},2^{2n+1}]$ in Lemma~\ref{L:NonRecConv} and define $f=g={\bf 1}_A$.
For non-recurrence take $F=\N$ in Lemma~\ref{L:NonRecConv}.
\end{proof}

\section{Appendix}
We prove some results that were used in the main part of the article.
\subsection{Lacunary subsequence trick.} We are going to give
a variation of a trick that is often used to prove convergence results for averages (see \cite{RW95} for several such instances).
%%Its main utility is  that it
%%enables to reduce us to deduce convergence of a certain sequence of averages  of   non-negative numbers, %%to proving convergence for certain lacunary subsequences of the index set.
\begin{lemma}\label{L:Farao}
Let $(a_n)_{n\in\N}$ be a sequence of non-negative real numbers  and $(W_n)_{n\in\N}$ be an increasing sequence of positive real numbers  that satisfies
$$
\lim_{\gamma\to 1^+}\limsup_{n\to\infty} \frac{W_{[\gamma n]}}{W_n}=1.
$$
 For
  $N\in \N$ let
$$
A_N\mathrel{\mathop:}=\frac{1}{W_N}\sum_{n=1}^N a_n.
$$
Suppose that there exists $L\in [0,+\infty]$, and a sequence
of real numbers $\gamma_k\in (1,+\infty)$, with $\gamma_k\to 1$, and
such that for every $k\in \N$ we have
$$
\lim_{N\to\infty} A_{[\gamma_k^N]}=L.
$$
Then
$$
\lim_{N\to\infty} A_{N}=L.
$$
\end{lemma}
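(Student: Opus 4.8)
The plan is to use the two structural ingredients available: the partial sums $\sum_{n=1}^N a_n$ are nondecreasing in $N$ (since $a_n\ge0$), and the weights $W_N$ are increasing and ``almost multiplicative near $1$'' in the sense of the hypothesis. Together these let one trap $A_N$ between $A_{[\gamma_k^M]}$ and $A_{[\gamma_k^{M+1}]}$ whenever $[\gamma_k^M]\le N\le[\gamma_k^{M+1}]$, up to a multiplicative error that can be made close to $1$ by taking $k$ large; passing $N\to\infty$ and then $k\to\infty$ forces $A_N\to L$.

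The one nonroutine point is the auxiliary estimate
$$
c_k\mathrel{\mathop:}=\limsup_{M\to\infty}\frac{W_{[\gamma_k^{M+1}]}}{W_{[\gamma_k^M]}}\longrightarrow 1\qquad(k\to\infty).
$$
Since $W$ is increasing and $[\gamma_k^{M+1}]\ge[\gamma_k^M]$, one has $c_k\ge1$. For the upper bound, fix $\delta>0$: from $[\gamma_k^M]\le\gamma_k^M$ we get $[\gamma_k^{M+1}]\le\gamma_k^{M+1}\le\gamma_k\,[\gamma_k^M]+\gamma_k\le(\gamma_k+\delta)\,[\gamma_k^M]$ for $M$ large, hence $[\gamma_k^{M+1}]\le[(\gamma_k+\delta)[\gamma_k^M]]$, so by monotonicity of $W$, $W_{[\gamma_k^{M+1}]}/W_{[\gamma_k^M]}\le W_{[(\gamma_k+\delta)n]}/W_n$ with $n=[\gamma_k^M]\to\infty$. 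Taking $\limsup$ over $M$ gives $c_k\le\limsup_{n\to\infty}W_{[(\gamma_k+\delta)n]}/W_n$; letting $\delta\to0$ and $k\to\infty$ so that $\gamma_k+\delta\to1^+$, the displayed hypothesis on $W$ yields $\limsup_k c_k\le1$. In particular $c_k<\infty$ for all large $k$.

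With this in hand, note that the intervals $\{[\gamma_k^M],\dots,[\gamma_k^{M+1}]\}$, $M\ge M_0$, cover every integer $\ge[\gamma_k^{M_0}]$ (consecutive ones share the endpoint $[\gamma_k^{M+1}]$ and $[\gamma_k^M]\to\infty$), so each large $N$ lies in one of them. Monotonicity of the partial sums and of $W$ then give
$$
\frac{W_{[\gamma_k^M]}}{W_{[\gamma_k^{M+1}]}}\,A_{[\gamma_k^M]}\ \le\ A_N\ \le\ \frac{W_{[\gamma_k^{M+1}]}}{W_{[\gamma_k^M]}}\,A_{[\gamma_k^{M+1}]}.
$$
Now fix $\varepsilon\in(0,1)$. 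If $L<\infty$, choose $k$ with $c_k<1+\varepsilon$ and then $N$ large enough that both $|A_{[\gamma_k^M]}-L|<\varepsilon$ and $W_{[\gamma_k^{M+1}]}/W_{[\gamma_k^M]}<1+\varepsilon$ hold for the relevant $M$; the sandwich yields $\frac{L-\varepsilon}{1+\varepsilon}\le A_N\le(1+\varepsilon)(L+\varepsilon)$, so $\liminf_N A_N\ge\frac{L-\varepsilon}{1+\varepsilon}$ and $\limsup_N A_N\le(1+\varepsilon)(L+\varepsilon)$, and $\varepsilon\to0$ gives $\lim_N A_N=L$. If $L=+\infty$, pick $k$ with $c_k\le2$ and use only the left-hand inequality: for $N$ large $A_N\ge\tfrac12A_{[\gamma_k^M]}\to\infty$. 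The main obstacle is the auxiliary estimate $c_k\to1$, i.e.\ transferring the near-multiplicativity of $W$ from the continuous parameter $\gamma$ to the geometric scale $\gamma_k^M$; the covering remark, the monotonicity sandwich, and the case analysis on $L$ are all routine.
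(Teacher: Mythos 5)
Your proof is correct and follows essentially the same route as the paper's: sandwich $A_N$ between $c_{k,M}^{-1}A_{[\gamma_k^M]}$ and $c_{k,M}A_{[\gamma_k^{M+1}]}$ with $c_{k,M}=W_{[\gamma_k^{M+1}]}/W_{[\gamma_k^M]}$, then let $N\to\infty$ and $k\to\infty$. You additionally spell out in full detail why $\limsup_M c_{k,M}\to 1$ as $k\to\infty$ (which the paper asserts as immediate from the hypothesis on $W$) and treat the $L=+\infty$ case explicitly, but the underlying argument is the same.
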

%%\begin{remark} In this article we only use this  for $W_n\sim n^c$ where  $c\in (0,1)$.
%%\end{remark}
\begin{proof}
Fix $k\in \N$ and for  $N\in \N$ let $M=M(k,N)$ be a non-negative integer  such that
$$
[\gamma_k^M]\leq N< [\gamma_k^{M+1}].
$$
 Since $a_n\geq 0$ for every $n\in \N$ and $W_n$ is increasing, we have
$$
A_N=\frac{1}{W_N}\sum_{n=1}^N a_n\leq \frac{1}{W_{[\gamma_k^{M}]}}\sum_{n=1}^{[\gamma_k^{M+1}]}a_n\leq c_{k,M} A_{[\gamma_k^{M+1}]} \quad \text{ where } \quad c_{k,M}\mathrel{\mathop:}= W_{[\gamma_k^{M+1}]}/W_{[\gamma_k^M]}.
$$
Similarly we have
$$
A_N\geq  c_{k,M}^{-1} A_{[\gamma_k^{M}]}.
$$
Putting the previous estimates together we get
\begin{equation} \label{E:980}
c_{k,M}^{-1}  A_{[\gamma_k^{M}]} \leq  A_N\leq c_{k,M}  A_{[\gamma_k^{M+1}]}.
\end{equation}
Notice that our assumptions give that
\begin{equation}\label{E:981}
\lim_{k\to\infty }\limsup_{M\to\infty} \ c_{k,M}=1.
\end{equation}
Since $M=M(k,N)\to \infty$ as $N\to \infty$ and $k$ is fixed, letting $N\to \infty$ and then $k\to\infty$ in \eqref{E:980}, and combining equation \eqref{E:981}
  with our assumption
$\lim_{N\to\infty} A_{[\gamma_k^N]}=L$,
we deduce that
$$
\liminf_{N\to\infty} A_N=\limsup_{N\to \infty} A_N=L.
$$
This completes the proof.
\end{proof}

\begin{corollary}\label{C:lacunary}
Let $(X,\X,\mu)$ be a probability space,
 $f_n\colon X\to \R$, $n\in\N$, be non-negative  measurable functions, $(W_n)_{n\in\N}$ be as in the previous lemma,
 and for $N\in \N$ let
 $$
 A_N(x)\mathrel{\mathop:}=\frac{1}{W_N}\sum_{n=1}^N f_n(x).
 $$
 Suppose that there exists a  function $f\colon X\to \R$ and a sequence of real numbers $\gamma_k\in (1,\infty)$, with $\gamma_k\to 1$, and such that for every $k\in \N$ we have
 for almost every $x\in X$ that
 %%and a sequence of real numbers $\gamma_k\in (1,\infty)$, with $\gamma_k\to 1$, and such that for every $k\in \N$
\begin{equation}\label{E:412}
\lim_{N\to\infty}A_{[\gamma_k^N]}(x)=f(x).
\end{equation}
%%$$
%%\sum_{k=1}^\infty \lim_{N\to\infty}\norm{A_{[\gamma_k]^N}_{L^2(\mu)}<\infty
%%$$
Then
$$
\lim_{N\to\infty} A_N(x)=f(x) \quad \text{ for almost every } x\in X.
$$
\end{corollary}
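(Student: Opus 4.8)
The plan is to deduce Corollary~\ref{C:lacunary} from Lemma~\ref{L:Farao} by a routine ``pointwise'' application, taking care of the null sets. First I would fix a countable index set for the lacunary parameters: the hypothesis gives, for each $k\in\N$, a set $X_k\subset X$ of full measure on which $\lim_{N\to\infty}A_{[\gamma_k^N]}(x)=f(x)$. Set $X_0\mathrel{\mathop:}=\bigcap_{k\in\N}X_k$, which still has full measure since it is a countable intersection of conull sets.

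Next, I would fix $x\in X_0$ and apply Lemma~\ref{L:Farao} with $a_n\mathrel{\mathop:}=f_n(x)$ (these are non-negative by assumption) and the same sequence $(W_n)$, which satisfies the required condition $\lim_{\gamma\to 1^+}\limsup_{n\to\infty}W_{[\gamma n]}/W_n=1$ by hypothesis. For this particular $x$, all the scalar averages $A_{[\gamma_k^N]}\mathrel{\mathop:}=\frac{1}{W_{[\gamma_k^N]}}\sum_{n=1}^{[\gamma_k^N]}f_n(x)$ converge to the common limit $L\mathrel{\mathop:}=f(x)\in[0,+\infty]$ as $N\to\infty$, for every $k\in\N$ (here I am using $x\in X_k$ for all $k$). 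Hence Lemma~\ref{L:Farao} applies verbatim and yields $\lim_{N\to\infty}A_N(x)=f(x)$.

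Since this holds for every $x\in X_0$ and $\mu(X\setminus X_0)=0$, we obtain $\lim_{N\to\infty}A_N(x)=f(x)$ for almost every $x\in X$, which is the claim. There is essentially no obstacle here: the only mild point of care is that the conull sets $X_k$ may depend on $k$, but because the sequence $(\gamma_k)$ is countable this causes no trouble, and one should also note that $f$ is automatically measurable as an a.e.\ limit of the measurable functions $A_{[\gamma_k^N]}$ (for any fixed $k$), so the statement is not vacuous. The substantive content of the argument is entirely contained in Lemma~\ref{L:Farao}; Corollary~\ref{C:lacunary} is its pointwise shadow.
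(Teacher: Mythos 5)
Your proposal is correct and is exactly the argument the paper gives: intersect the countably many conull sets on which \eqref{E:412} holds for each $k$, then apply Lemma~\ref{L:Farao} pointwise to the scalar sequence $a_n=f_n(x)$. You have merely spelled out the details that the paper leaves implicit.
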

%%\begin{remark}
%%It is important for our applications that we only have to check countably many conditions.
%%On the other hand, the choice of the set $\{1+1/k,k\in\N\}$ is arbitrary; any countably infinite set of real numbers
%%greater than $1$ that accumulates at  $1$ works equally well.
%%\end{remark}
\begin{proof}
%%Let $(\gamma_k)_{k\in\N}$ be any sequence of  real numbers such that $\gamma_k>1$ and $\gamma_k\to 1$.
It suffices to notice that for  almost every $x\in X$
equation \eqref{E:412} is satisfied for  every $k\in\N$, and then apply Lemma~\ref{L:Farao}.
\end{proof}

\subsection{Weighted averages}
The following lemma is classical and can be  proved using summation by parts (also the assumptions on the weights $w_n$ can be weakened).
 \begin{lemma}\label{L:ChangeVar}
   Let $(v_n)_{n\in\N}$ be a  sequence of vectors in a normed space,  $(w_n)_{n\in \N}$ be a
  decreasing  sequence of positive real numbers that satisfies
 $w_n\sim n^{-a}$  for some $a\in (0,1)$, and
for $N\in \N$ let $W_N\mathrel{\mathop:}=w_1+\cdots+w_N$.
   Then the averages $\frac{1}{N}\sum_{n=1}^N v_n$ and
   the averages $ \frac{1}{W_N}\sum_{n=1}^N w_n\ \! \!v_n$ are asymptotically equal.
   \end{lemma}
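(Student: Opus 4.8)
The plan is to use Abel summation (summation by parts) to express each of the two averages as a matrix transform of the other, and then to invoke the Silverman--Toeplitz theorem on regularity of matrix summation methods. Write $A_N\mathrel{\mathop:}=\frac1N\sum_{n=1}^N v_n$ and $B_N\mathrel{\mathop:}=\frac1{W_N}\sum_{n=1}^N w_n v_n$; recall that ``asymptotically equal'' means that convergence of one sequence forces convergence of the other to the same limit, so there are two implications to prove. The first thing I would record are the two scalar identities obtained from the general formula $\sum_{n=1}^N b_n(c_n-c_{n-1})=\sum_{n=1}^{N-1}(b_n-b_{n+1})c_n+b_Nc_N$ (with $c_0=0$): taking $b_n=w_n$, $c_n=n$ gives $\sum_{n=1}^{N-1}(w_n-w_{n+1})\,n+w_NN=W_N$, and taking $b_n=w_n^{-1}$, $c_n=W_n$ gives $\sum_{n=1}^{N-1}(w_n^{-1}-w_{n+1}^{-1})W_n+W_N/w_N=N$. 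I would also note the asymptotics $W_N\sim N^{1-a}/(1-a)$, hence $W_N/(Nw_N)\to 1/(1-a)$.

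For the forward implication (convergence of $A_N$ forces that of $B_N$) I would use $v_n=nA_n-(n-1)A_{n-1}$ and Abel summation to write $B_N=\sum_{n=1}^{N-1}t_{N,n}A_n+t_{N,N}A_N$ with $t_{N,n}=(w_n-w_{n+1})n/W_N$ for $n<N$ and $t_{N,N}=w_NN/W_N$. Since $(w_n)$ is decreasing all $t_{N,n}\ge0$; the first identity gives $\sum_n t_{N,n}=1$ exactly; and for each fixed $n$ we have $t_{N,n}\to0$ as $N\to\infty$ because $W_N\to\infty$. Hence the transform is regular and $A_n\to L$ implies $B_N\to L$.

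For the converse I would use $w_nv_n=W_nB_n-W_{n-1}B_{n-1}$ and Abel summation to write $A_N=\sum_{n=1}^{N-1}t'_{N,n}B_n+t'_{N,N}B_N$ with $t'_{N,n}=(w_n^{-1}-w_{n+1}^{-1})W_n/N$ (now $\le0$) for $n<N$ and $t'_{N,N}=W_N/(Nw_N)$. The second identity gives $\sum_n t'_{N,n}=1$, and each column again tends to $0$. The only step that actually uses the hypothesis on $(w_n)$ is the uniform bound on the absolute row sums: $\sum_n|t'_{N,n}|=\frac1N(2W_N/w_N-N)=2W_N/(Nw_N)-1\to(1+a)/(1-a)$, which is finite precisely because $w_n\sim n^{-a}$ with $a\in(0,1)$ keeps $W_N/(Nw_N)$ bounded. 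With this third condition verified, the Silverman--Toeplitz theorem again applies and $B_n\to L$ implies $A_N\to L$, finishing the proof.

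The main obstacle is exactly this last verification: the converse is a Tauberian-type assertion which fails for sufficiently rapidly decaying weights, and it is the polynomial decay $w_n\sim n^{-a}$ (equivalently, the boundedness of $W_N/(Nw_N)$) that makes the inverse summation method regular as well. Everything else is a routine manipulation of summation by parts; one could also replace the appeal to Silverman--Toeplitz by a direct $\varepsilon$-splitting argument, bounding the contribution of $\{n\le n_0\}$ by a constant and the tail by $\varepsilon$ times the relevant total weight, using that the weights $w_n-w_{n+1}$ (resp.\ $w_{n+1}^{-1}-w_n^{-1}$) are nonnegative and that $w_NN\le W_N\ll W_N$ (resp.\ $W_N/w_N\ll N$).
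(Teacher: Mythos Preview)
Your proposal is correct and follows exactly the route the paper indicates: the paper does not actually supply a proof but simply states that the lemma ``is classical and can be proved using summation by parts,'' and your two Abel-summation identities together with the Silverman--Toeplitz regularity check (including the key bound $\sum_n|t'_{N,n}|=2W_N/(Nw_N)-1\to(1+a)/(1-a)$, which is where the hypothesis $w_n\sim n^{-a}$, $a\in(0,1)$, is genuinely used) carry this out in full. The direct $\varepsilon$-splitting alternative you mention is equivalent and also works verbatim for vectors in a normed space.
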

\subsection{Van der Corput's lemma} We state a variation of
a classical elementary estimate of van der Corput.
\begin{lemma} \label{L:VDC2} Let  $V$ be an inner product space, $N\in \N$,
and  $v_1,\ldots, v_N\in V$.
Then for every integer  $M$ between $1$ and $N$ we have
$$
\norm{\sum_{n=1}^N v_n}^2\leq 2M^{-1}N\cdot \sum_{n=1}^N\norm{v_n}^2+
 4M^{-1} N\sum_{m=1}^M \Big| \sum_{n=1}^{N-m}<v_{n+m}, v_n>\Big|.
$$
\end{lemma}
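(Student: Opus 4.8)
The plan is to run the classical van der Corput averaging trick over a window of length $M$. First I would extend the data by setting $v_n\mathrel{\mathop:}=0$ for $n\le 0$ and for $n>N$, and for each integer $n$ introduce the block sum $w_n\mathrel{\mathop:}=\sum_{j=0}^{M-1}v_{n-j}$. Since each $v_k$ with $1\le k\le N$ occurs in exactly the $M$ block sums $w_k,w_{k+1},\dots,w_{k+M-1}$, all of which have index in $[1,N+M-1]$, we obtain the identity
$$M\sum_{n=1}^N v_n=\sum_{n=1}^{N+M-1}w_n .$$
Applying the Cauchy--Schwarz inequality to the right-hand side (a sum of $N+M-1$ vectors) gives
$$M^2\norm{\sum_{n=1}^N v_n}^2\le (N+M-1)\sum_{n=1}^{N+M-1}\norm{w_n}^2 .$$

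Next I would expand $\norm{w_n}^2=\sum_{i,j=0}^{M-1}\langle v_{n-i},v_{n-j}\rangle$, pair the terms $(i,j)$ and $(j,i)$ for $i<j$ so that they combine into $2\,\mathrm{Re}\,\langle v_{n-i},v_{n-j}\rangle$, and then sum over $n$. The diagonal $i=j$ contributes $M\sum_{n=1}^N\norm{v_n}^2$ (again each $v_k$ appears $M$ times), while for the off-diagonal part I would group the index pairs according to their difference $m=j-i\in\{1,\dots,M-1\}$: there are $M-m$ such pairs, and the re-indexing $\sum_n\langle v_{n-i},v_{n-i-m}\rangle=\sum_{n=1}^{N-m}\langle v_{n+m},v_n\rangle$ (using the zero extension) identifies each of them with the lag-$m$ correlation. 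This yields
$$\sum_{n=1}^{N+M-1}\norm{w_n}^2=M\sum_{n=1}^N\norm{v_n}^2+2\sum_{m=1}^{M-1}(M-m)\,\mathrm{Re}\!\sum_{n=1}^{N-m}\langle v_{n+m},v_n\rangle .$$
Bounding $M-m\le M$ and $\mathrm{Re}\,z\le|z|$, and harmlessly enlarging the range of $m$ to $\{1,\dots,M\}$, I would obtain
$$\sum_{n=1}^{N+M-1}\norm{w_n}^2\le M\sum_{n=1}^N\norm{v_n}^2+2M\sum_{m=1}^{M}\Big|\sum_{n=1}^{N-m}\langle v_{n+m},v_n\rangle\Big| .$$

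Finally I would substitute this into the Cauchy--Schwarz estimate, divide by $M^2$, and use $N+M-1\le 2N$ (valid since $1\le M\le N$) to conclude
$$\norm{\sum_{n=1}^N v_n}^2\le 2M^{-1}N\sum_{n=1}^N\norm{v_n}^2+4M^{-1}N\sum_{m=1}^{M}\Big|\sum_{n=1}^{N-m}\langle v_{n+m},v_n\rangle\Big| ,$$
which is the asserted inequality. There is no genuine difficulty in this argument; the only points that require care are the two combinatorial bookkeeping steps — that each $v_k$ is hit by exactly $M$ of the block sums, and that each lag-$m$ correlation arises from exactly $M-m$ index pairs — together with, in the case of a complex inner product space, taking the real part of $\langle v_{n-i},v_{n-j}\rangle+\langle v_{n-j},v_{n-i}\rangle$ before estimating it by $2\,|\langle v_{n-i},v_{n-j}\rangle|$.
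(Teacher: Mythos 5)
Your proof is correct, and it is the standard van der Corput argument — extend by zero, form sliding block sums of length $M$, use Cauchy--Schwarz, then expand $\norm{w_n}^2$ and group the cross terms by lag $m$. The paper does not give a proof of this lemma (it simply cites \cite{KN74} for the scalar case and notes that the general case is identical), so your proposal supplies essentially the same classical argument that reference contains; all the bookkeeping — each $v_k$ appearing in exactly $M$ of the $w_n$, the $M-m$ index pairs at lag $m$, the $\mathrm{Re}$ of conjugate cross terms, the final bound $N+M-1\le 2N$ — checks out.
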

In the case where $V=\R$ and $\norm{\cdot}=|\cdot|$, the proof can be found, for example,  in \cite{KN74}.
The proof in the general case is essentially identical.

\subsection{Borel-Cantelli in density}
We are going to use the following Borel-Cantelli type lemma:
\begin{lemma}\label{L:BorCan}
  Let  $E_n$, $n\in \N$, be events on a probability space $(\Omega,\mathcal{F}, \P)$ that
  satisfy $\P(E_n)\ll (\log n)^{-1-\varepsilon}$ for some $\varepsilon>0$.
  Then almost surely
the set $\{n\in \N \colon \omega \in E_n\}$ has zero density.\footnote{On the other hand, it is not hard to construct a probability space $(\Omega,\mathcal{F}, \P)$  and  events $E_n$, $n\in \N$, such that
$\P(E_n)\leq (\log n)^{-1}$, and almost surely the set $\{n\in \N \colon \omega \in E_n\}$
has positive upper density.}
%%To prove $(ii)$ we consider a sequence of independent events $(B_n)_{n\in \N}$ such that
%%$$
%%\P(B_n)=1/n
%%\ \text{ and almost surely } \sum_{n=1}^\infty {\bf 1}_{B_n}=\infty.
%%$$
%%We define the sequence of events $(E_n)_{n\in \N}$ by
%%$E_n= B_k$ for $n\in [2^{k},2^{k+1})$.
%%Then
%%$$
%%\P(E_n)=1/k\leq 1/\log n.
%%$$
%%Furthermore, by construction, we have
%%$$
%%2^{-N}\sum_{n=1}^{2^N}{\bf 1}_{E_n}(\omega)\geq 2^{-N}\sum_{2^{N-1}\leq n\leq 2^N} {\bf %%1}_{E_n}(\omega)=
%%{\bf 1}_{B_N}(\omega)/2=
%%1/2 \ \text{ for infinitely many } N\in \N.
%%$$
%%As a consequence
%%$$
%%\limsup_{N\to\infty} A_N(\omega) \geq 1/2.
%%$$
%%This completes the proof of part $(ii)$.}
\end{lemma}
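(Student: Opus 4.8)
The plan is to prove that the set $\{n\in\N\colon\omega\in E_n\}$ has zero density for almost every $\omega$ by showing that the averages $\frac1N\sum_{n=1}^N{\bf 1}_{E_n}(\omega)$ tend to $0$ almost surely. Since the events $E_n$ are not assumed independent, neither the second Borel--Cantelli lemma nor the strong law of large numbers is available, so the argument must rest on a first-moment estimate combined with a lacunary-subsequence trick, interpolating via monotonicity of the partial sums.

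First I would record the first-moment bound. For $n\geq 2$ we have $\P(E_n)\ll(\log n)^{-1-\varepsilon}$, so splitting $\{1,\dots,N\}$ at $\sqrt N$ and using $\log n\geq\frac12\log N$ for $n\geq\sqrt N$ gives $\E_\omega\big(\sum_{n=1}^N{\bf 1}_{E_n}\big)=\sum_{n=1}^N\P(E_n)\ll\sqrt N+N(\log N)^{-1-\varepsilon}\ll N(\log N)^{-1-\varepsilon}$. By Markov's inequality, for each fixed $\delta>0$,
\[
\P\Big(\sum_{n=1}^N{\bf 1}_{E_n}\geq\delta N\Big)\ll_\delta(\log N)^{-1-\varepsilon}.
\]

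Next I would choose the subsequence. Fix any $\theta$ with $\frac{1}{1+\varepsilon}<\theta<1$ and put $N_k\mathrel{\mathop:}=[\exp(k^\theta)]$, so that $\log N_k\sim k^\theta$ and hence $\sum_k(\log N_k)^{-1-\varepsilon}\ll\sum_k k^{-\theta(1+\varepsilon)}<+\infty$, because $\theta(1+\varepsilon)>1$. Applying the Markov bound along $N=N_k$ together with the first Borel--Cantelli lemma, for each fixed $\delta>0$ almost surely $\sum_{n=1}^{N_k}{\bf 1}_{E_n}(\omega)<\delta N_k$ for all large $k$. Since $\theta<1$ we also have $(k+1)^\theta-k^\theta\to 0$, hence $N_{k+1}/N_k\to 1$; so for $N$ with $N_k\leq N<N_{k+1}$, monotonicity of partial sums of non-negative terms gives $\sum_{n=1}^N{\bf 1}_{E_n}(\omega)\leq\sum_{n=1}^{N_{k+1}}{\bf 1}_{E_n}(\omega)<\delta N_{k+1}\leq\delta\,(N_{k+1}/N_k)\,N$. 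Letting $N\to\infty$ we obtain $\limsup_{N\to\infty}\frac1N\sum_{n=1}^N{\bf 1}_{E_n}(\omega)\leq\delta$ almost surely, and intersecting the corresponding probability-one sets over $\delta=1/j$, $j\in\N$, shows that this $\limsup$ is almost surely $0$, which is the claim.

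The only delicate point — and the main obstacle — is the calibration of the subsequence $(N_k)$: it must grow fast enough that $\sum_k(\log N_k)^{-1-\varepsilon}$ converges, so that Borel--Cantelli can be applied to the crude Markov bound, yet slowly enough that $N_{k+1}/N_k\to 1$, so that nothing is lost when passing from the sparse values $N_k$ to arbitrary $N$. Both requirements can be met simultaneously precisely because $\varepsilon>0$; when $\varepsilon=0$ no such sequence exists, which is consistent with the sharpness example mentioned in the footnote.
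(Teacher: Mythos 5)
Your proof is correct, and it takes a genuinely different route from the paper's. The paper fixes $\gamma>1$ and sums the first moments of $A_{[\gamma^N]}$ directly over $N$: since $\log[\gamma^N]\sim N\log\gamma$, the bound $\E_\omega(A_{[\gamma^N]})\ll N^{-1-\varepsilon}$ is summable, so $\sum_N A_{[\gamma^N]}(\omega)<\infty$ almost surely and hence $A_{[\gamma^N]}(\omega)\to 0$; letting $\gamma$ range over a sequence decreasing to $1$ and invoking the already-proved lacunary-subsequence trick (Corollary~\ref{C:lacunary}) then passes from geometric subsequences to all $N$. You instead pick a single subexponential subsequence $N_k=[\exp(k^\theta)]$ with $1/(1+\varepsilon)<\theta<1$, calibrated so that $\sum_k(\log N_k)^{-1-\varepsilon}$ converges while $N_{k+1}/N_k\to 1$, and you interpolate by monotonicity directly, using Markov plus Borel--Cantelli rather than summing expectations of the averages themselves. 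The two proofs exploit the same first-moment bound and the same slack from $\varepsilon>0$; the difference is that the paper trades a more explicit subsequence calibration for reuse of the lacunary machinery it already set up in the appendix, whereas your version is self-contained but requires the more delicate choice of $\theta$. Both are sound.
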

\begin{proof}
 Let
$$
A_N(\omega)\mathrel{\mathop:}=\frac{1}{N}\sum_{n=1}^N{\bf 1}_{E_n}(\omega).
$$
Our assumption gives
$$
\E_\omega(A_N(\omega))\ll (\log N)^{-1-\varepsilon}.
$$
Therefore, for every $\gamma>1$
$$
\sum_{N=1}^\infty A_{[\gamma^N]}(\omega)<+\infty
$$
almost surely.
This implies that for every $\gamma>1$
$$
\lim_{N\to\infty} A_{[\gamma^N]}(\omega)= 0 \quad \text{ almost surely}.
$$
Since $\gamma>1$ is arbitrary we conclude by Corollary~\ref{C:lacunary}   that
$$
\lim_{N\to\infty} A_N(\omega) =0 \quad \text{ almost surely}.
$$
This proves the advertised claim.
\end{proof}

\subsection{Estimates for sums of random variables.}
We use some straightforward moment estimates to get two bounds for sums of independent  random variables that were used in the proofs.
%%Although stronger bounds can  be obtained using more sophisticated techniques we shall not
%%, we
%% state and prove only special cases that are
%%strong enough for our purposes and can be obtained by straightforward moment estimates.
\begin{lemma}\label{L:estimate1} Let $X_n$ be  non-negative, uniformly bounded
%%$0-1$ valued
 random variables,
 with $\E_\omega(X_n)\sim n^{-a}$ for some $a\in (0,1)$. Suppose that the random
 variables $X_n-\E_\omega(X_n)$ are orthogonal.
 Then almost surely we have
$$
\lim_{N\to\infty} \frac{1}{W_N}\sum_{n=1}^N X_n = 1,
$$
where, as usual, $W_N\mathrel{\mathop:}=\sum_{n=1}^N\E_\omega(X_n)$.
\end{lemma}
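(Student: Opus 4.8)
Write $Y_n\mathrel{\mathop:}=X_n-\E_\omega(X_n)$ and $S_N\mathrel{\mathop:}=\sum_{n=1}^N Y_n$, and recall that since $\E_\omega(X_n)\sim n^{-a}$ we have $W_N\sim N^{1-a}$. The plan is to show that almost surely $A_N\mathrel{\mathop:}=\frac{1}{W_N}\sum_{n=1}^N X_n\to 1$, and since $A_N=\frac{S_N}{W_N}+1$, it is enough to prove that $\frac{S_N}{W_N}\to 0$ almost surely. The non-negative functions $X_n=Y_n+\E_\omega(X_n)$ on $\Omega$ together with the increasing sequence $W_N$ (which satisfies $\lim_{\gamma\to 1^+}\limsup_N W_{[\gamma N]}/W_N=\lim_{\gamma\to1^+}\gamma^{1-a}=1$) put us in the setting of Corollary~\ref{C:lacunary} with $X$ replaced by $\Omega$; hence it suffices to show that for each fixed $\gamma>1$ one has $\frac{S_{[\gamma^k]}}{W_{[\gamma^k]}}\to 0$ as $k\to\infty$, almost surely.

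The key point is the variance estimate. Let $C$ be a uniform bound for the $X_n$. Since $0\le X_n\le C$ we have $X_n^2\le C X_n$, hence $\mathrm{Var}(X_n)=\E_\omega(Y_n^2)\le \E_\omega(X_n^2)\le C\,\E_\omega(X_n)$. By the orthogonality of the $Y_n$ this gives
$$
\E_\omega\big(S_N^2\big)=\sum_{n=1}^N \E_\omega(Y_n^2)\le C\sum_{n=1}^N\E_\omega(X_n)=C\,W_N,
$$
and therefore $\E_\omega\big((S_N/W_N)^2\big)\le C/W_N\ll N^{-(1-a)}$. (It is this sharper bound, rather than the crude $\mathrm{Var}(X_n)\le C^2$, that makes the argument work for the whole range $a\in(0,1)$.)

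Now fix $\gamma>1$ and set $N_k\mathrel{\mathop:}=[\gamma^k]$. Since $W_{N_k}\sim c\,(\gamma^{1-a})^k$ grows geometrically, $\sum_{k}1/W_{N_k}<+\infty$, so
$$
\E_\omega\Big(\sum_{k=1}^\infty \big(S_{N_k}/W_{N_k}\big)^2\Big)=\sum_{k=1}^\infty \E_\omega\big((S_{N_k}/W_{N_k})^2\big)\le C\sum_{k=1}^\infty 1/W_{N_k}<+\infty.
$$
Hence $\sum_k (S_{N_k}/W_{N_k})^2<+\infty$ almost surely, which forces $S_{N_k}/W_{N_k}\to 0$, i.e. $A_{N_k}\to 1$, almost surely. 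Applying this for each $\gamma_k=1+1/k$ and discarding a countable union of null sets, we obtain a single event of probability $1$ on which $A_{[\gamma_k^N]}\to 1$ for every $k$; Corollary~\ref{C:lacunary} then yields $A_N\to 1$ almost surely, which is the claim. There is no real obstacle here beyond isolating the correct variance bound; the remainder is the standard lacunary-subsequence form of the strong law for orthogonal sequences. $\qed$
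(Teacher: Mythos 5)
Your proof is correct and follows essentially the same route as the paper: bound $\E_\omega(Y_n^2)$ by $\E_\omega(X_n)$ using the uniform bound, use orthogonality to get $\E_\omega(S_N^2)\ll W_N$, sum along lacunary subsequences, and upgrade via Corollary~\ref{C:lacunary}. The only cosmetic difference is that the paper normalizes $X_n\leq 1$ at the start while you carry the constant $C$ throughout.
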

\begin{remark}
Assuming independence,  one can use Kolmogorov's three series theorem to show that the
stated result holds under the relaxed assumption $W_N\to \infty$.
\end{remark}
\begin{proof}
We can assume that $X_n(\omega)\leq 1$ for every $\omega\in \Omega$ and $n\in \N$.
We let
$$
A_N\mathrel{\mathop:}=\frac{1}{W_N}\sum_{n=1}^N Y_n
$$
where
$$
Y_n\mathrel{\mathop:}=X_n-\E_\omega(X_n).
$$
Since $Y_n$ are zero mean orthogonal random variables and $\E_\omega(Y_n^2)\leq \E_\omega(X_n)$,  we have
$$
\E_\omega(A_N^2)=\frac{1}{W_N^2}\sum_{n=1}^N\E_\omega(Y_n^2)\leq \frac{1}{W_N}.
$$
Combining this estimate with the fact  $W_N\sim N^{1-a}$, we conclude that for every $\gamma>1$ we have
$$
\sum_{N=1}^\infty \E_\omega(A_{[\gamma^N]}^2)<+\infty.
$$
Therefore, for every $\gamma>1$ we have
$$
\lim_{N\to\infty} A_{[\gamma^N]}= 0 \quad \text{almost surely},
$$
or equivalently, that
$$
\lim_{N\to\infty} \frac{1}{W_{[\gamma^N]}}\sum_{n=1}^{[\gamma^N]} X_n = 1  \quad \text{almost surely}.
$$
Since the sequence $(W_n)_{n\in \N}$ satisfies the assumptions of Corollary~\ref{C:lacunary}, and $X_n$ is non-negative,  we conclude that
$$
\lim_{N\to\infty} \frac{1}{W_N}\sum_{n=1}^N X_n = 1  \quad \text{almost surely}.
$$
This completes  the proof.
\end{proof}

\begin{lemma}\label{L:estimate2}
 Let $X_n$ be  independent,  uniformly bounded
%%$0-1$ valued
 random variables,
 with $\E_\omega(X_n)\sim n^{-a}$  for some $a\in (0,1/6)$, and let $b$ be any positive real number.
 %%, and suppose that the random
 %%variables $X_n-\E(X_n)$ are orthogonal.
 Then almost surely we have
$$
\Big|\sum_{m=1}^{N^b}\sum_{n=1}^N X_{n+m} X_n\Big|\ll_\omega N^{b+1-2a}.
$$
\end{lemma}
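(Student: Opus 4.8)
The plan is to control the double sum through first and second moment estimates, pass to a lacunary sequence of scales by Chebyshev and Borel--Cantelli, and then fill in all $N$ by monotonicity, in the spirit of the proof of Lemma~\ref{L:estimate1}. Assume, as we may, that $0\le X_n\le 1$ (this is the situation in all our applications, where $X_n=Y_n^2$). Write $M\mathrel{\mathop:}=[N^b]$, $\sigma_n\mathrel{\mathop:}=\E_\omega(X_n)\sim n^{-a}$, and set
$$
S_N\mathrel{\mathop:}=\sum_{m=1}^{M}\sum_{n=1}^N X_{n+m}X_n,\qquad \mu_N\mathrel{\mathop:}=\E_\omega(S_N)=\sum_{m=1}^{M}\sum_{n=1}^N \sigma_{n+m}\sigma_n ,
$$
the last equality by independence (recall $m\ge 1$). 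Since $\sigma_{n+m},\sigma_n\ll n^{-a}$ and $2a<1$, this gives $\mu_N\ll M\sum_{n\le N}n^{-2a}\ll N^{b+1-2a}$, so $\mu_N$ already has the claimed size and it remains to estimate the fluctuation $S_N-\mu_N$.

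I would do that by bounding the variance. Expanding the square,
$$
\mathrm{Var}(S_N)=\sum_{(m_1,n_1)}\,\sum_{(m_2,n_2)}\mathrm{Cov}\bigl(X_{n_1+m_1}X_{n_1},\,X_{n_2+m_2}X_{n_2}\bigr),
$$
and by independence a summand vanishes unless the index pairs $\{n_1,n_1+m_1\}$ and $\{n_2,n_2+m_2\}$ meet. If the pairs coincide, i.e. $(m_1,n_1)=(m_2,n_2)$, then, using $X^2\le X$ and independence, the total contribution is at most $\sum_{m\le M}\sum_{n\le N}\E_\omega(X_{n+m}^2X_n^2)\le\sum_{m\le M}\sum_{n\le N}\sigma_{n+m}\sigma_n\ll N^{b+1-2a}$. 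If the pairs meet in exactly one index, the covariance is bounded by an absolute constant, and a short case analysis over which endpoint of one pair equals which endpoint of the other shows that in each configuration the free parameters range over a set of size $\ll NM^2$; so this part contributes $\ll NM^2$. Altogether $\mathrm{Var}(S_N)\ll N^{b+1-2a}+NM^2\ll N^{2b+1}$.

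Since $\mu_N\ll N^{b+1-2a}$, it now suffices to bound $|S_N-\mu_N|$ along $N=[2^k]$. By Chebyshev's inequality,
$$
\P\bigl(|S_N-\mu_N|> N^{b+1-2a}\bigr)\le \frac{\mathrm{Var}(S_N)}{N^{2b+2-4a}}\ll N^{-1+4a},
$$
which, as $a<1/6<1/4$, is summable over $N\in\{[2^k]:k\in\N\}$. By Borel--Cantelli, almost surely $S_{[2^k]}\le \mu_{[2^k]}+[2^k]^{b+1-2a}\ll_\omega [2^k]^{b+1-2a}$ for all large $k$. Finally, $S_N$ is non-decreasing in $N$ (the terms are non-negative and the index set grows with $N$), so for $[2^k]\le N<[2^{k+1}]$ we get $S_N\le S_{[2^{k+1}]}\ll_\omega [2^{k+1}]^{b+1-2a}\ll N^{b+1-2a}$, which is the assertion.

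The step I expect to be the main obstacle is the case analysis in the variance estimate: one must enumerate the ways two index pairs $\{n_1,n_1+m_1\}$ and $\{n_2,n_2+m_2\}$ can share exactly one element (the smaller of one equals the larger of the other, the two larger coincide with $n_1\ne n_2$, or the two smaller coincide with $m_1\ne m_2$), and check that each configuration contributes only $O(NM^2)$ parameter choices, so that after division by $N^{2b+2-4a}$ the bound $N^{2b+1}$ on the variance still beats a negative power of $N$. Everything else is routine; the hypothesis is used only mildly, via $2a<1$ (for $\mu_N$ and the coincident-pair term) and $4a<1$ (for the Borel--Cantelli summability), both comfortably implied by $a<1/6$.
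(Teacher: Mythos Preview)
Your argument is correct and follows the paper's overall strategy: center, bound the second moment by counting quadruples that fail to be independent (your bound $\mathrm{Var}(S_N)\ll N^{1+2b}$ is exactly the paper's $\E_\omega(S_N^2)\ll N^{1+2b}$), apply Chebyshev and Borel--Cantelli along a sparse subsequence, and then fill the gaps. The difference lies in this last step. The paper passes to the \emph{polynomial} subsequence $N^k$ and bounds $|A_N-A_{M^k}|$ directly by the number of extra summands; this requires a $k$ with $k(2a-b)<1<k(1-4a)$, and the choice $k=3$ works precisely when $a<1/6$, which is where the stated constraint comes from. You instead pass to the \emph{geometric} subsequence $2^k$ and fill gaps by monotonicity of $S_N$. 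Your route is shorter and uses only $4a<1$; in fact the remark following the lemma in the paper points to exactly this improvement to the range $a\in(0,1/4)$. The price is your added hypothesis $X_n\ge 0$: the phrase ``as we may'' is not justified by the lemma as stated (uniform boundedness gives $|X_n|\le 1$ after rescaling, not $X_n\ge 0$), only by the intended application $X_n=Y_n^2$. Without non-negativity the monotonicity of $S_N$ fails and one is forced back to a gap-filling argument of the paper's type.
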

Using a lacunary subsequence trick, similar to the one used in the proof of Lemma~\ref{L:estimate1},
one can show that the conclusion actually holds for every $a\in (0,1/4)$.
\begin{proof}
Let
$$
S_N\mathrel{\mathop:}=\sum_{m=1}^{N^b}\sum_{n=1}^N (X_{n+m} X_n-\E_\omega(X_{n+m})\cdot \E_\omega(X_n))
 $$
 and
 $$
  \quad A_N\mathrel{\mathop:}=N^{-c}S_N \quad \text{ where } \quad c\mathrel{\mathop:}=b+1-2a,
$$
Since
$$
N^{-c} \sum_{m=1}^{N^b}\sum_{n=1}^N \E_\omega(X_{n+m})\cdot \E_\omega(X_n)
\ll 1,
$$
it suffices to show that almost surely we have $\lim_{N\to\infty}A_N= 0$.

Expanding $S_N^2$   and using the independence of the random variables $X_n$, we see that
$$
\E_\omega(S_N^2)\ll |\{(m,m',n,n')\in [1,N^b]^2\times [1,N]^2\colon n,n',n+m,n'+m' \text{ are not distinct }\}|\ll N^{1+2b}.
$$
Therefore,
$$
\E_\omega(A_N^2)\ll N^{-(1-4a)}.
$$
It follows that if
$k\in \N $ satisfies $k(1-4a)>1$, then
 $$
 \sum_{N=1}^\infty \E_\omega(A_{N^k}^2)<+\infty.
 $$
As a consequence,
$$
\lim_{N\to\infty} A_{N^k}= 0 \quad \text{ for every } k\in \N \  \text{ satisfying } \ k(1-4a)>1.
$$
 For any such $k\in \N$, and for a given $N\in \N$, let  $M\in \N$ be an integer such that $M^k\leq N \leq (M+1)^k$.
Then
\begin{align*}
|A_{N}-A_{M^k}| & \leq |(N^{-c}M^{kc}-1)A_{M^k}|+N^{-c}\sum_{M^k<n\leq (M+1)^k}|Y_n|\\ &\ll
|(N^{-c}M^{kc}-1)A_{M^k}|+N^{-c} M^{k-1}.
\end{align*}
The first term converges almost surely to zero as $N\to \infty$, since  this is the case for $A_{M^k}$ and
$N^{-1}M^k\leq 1$. The second term converges  to zero if
$kc>k-1$, or equivalently, if $k(2a-b)<1$.

Combining the above estimates, we get almost surely that $\lim_{N\to\infty}A_N= 0$, provided that    there exists $k\in \N$ such that
 $k(2a-b)<1<k(1-4a)$. If $a<1/6$, then  $k=3$ is such a value. This  completes the proof.
\end{proof}

\end{document}